\documentclass[runningheads]{llncs}
\usepackage[T1]{fontenc}
\usepackage{newtxtext}
\usepackage[varvw]{newtxmath}
\usepackage{graphicx}
\usepackage{color}
\usepackage{url}

\usepackage{mathtools,xspace}
\usepackage{xfrac}
\usepackage[dvipsnames,svgnames]{xcolor}
\usepackage{comment}
\usepackage{hyperref}
\usepackage{cleveref}
\usepackage{wrapfig}
\usepackage[font=small]{caption,subcaption}
\usepackage{todonotes}
\usepackage{etoolbox}
\usepackage{longtable}
\usepackage{tikz}
\usetikzlibrary{positioning}
\usetikzlibrary{arrows}
\usetikzlibrary{decorations.pathmorphing}
\usetikzlibrary{decorations.markings}
\usetikzlibrary{decorations.pathreplacing}
\usetikzlibrary{shapes.geometric}
\tikzset{
    small circles/.style={circle,inner sep=2pt,fill=#1},
    hollow circles/.style n args={2}{circle,inner sep=#1,draw=#2,thick},
    stars/.style={star,inner sep=2pt}
}
\tikzset{
    plain/.style={fill=none,shape=rectangle,draw=none,inner sep=3pt}
}
\newcommand{\colors}{\mathcal{C}}

\newcommand{\Maj}{\mathrm{Maj}}

\newcommand{\majcol}[1]{[#1]}
\newcommand{\configuration}[1]{$\textnormal{config}_{#1}$}
\newcommand{\move}[1]{\text{move} #1}
\newcommand{\half}[1]{\left \lfloor \frac{#1}{2} \right \rfloor}
\newcommand{\PSPACE}{\text{\normalfont PSPACE}\xspace}
\newcommand{\NP}{\text{\normalfont NP}\xspace}
\newcommand{\defproblem}[3]{
  \vspace{1mm}
\begin{center}
\noindent\fbox{
  \begin{minipage}{.9\linewidth}
  \begin{tabular*}{\linewidth}{@{\extracolsep{\fill}}lr}
    \textsc{#1} \
  \end{tabular*}
  {\bf{Input:}} #2  \
  \\ {\bf{Question:}} #3
  \end{minipage}
  }
\end{center}
  \vspace{1mm}
}
\hypersetup{
    colorlinks=true,
    linkcolor=blue,
    citecolor=blue,
    urlcolor=blue
}
\begin{document}
\title{Structural and computational aspects of majority coloring games}
%
%
\author{Yash Chawda\inst{1}\orcidID{0009-0003-3713-8556} \and
Saraswati Girish Nanoti\inst{2}\orcidID{0009-0009-7789-8895} \and
Brahadeesh Sankarnarayanan\inst{1}\orcidID{0000-0001-9191-1253} \and
Eshwar Srinivasan\inst{3}\orcidID{0000-0002-9233-1573}}
\authorrunning{Y. Chawda et al.}

\institute{
Department of Mathematics, Indian Institute of Technology Jodhpur,
Jodhpur 342030, Rajasthan, India\\
\email{p25ma0204@iitj.ac.in, brahadeesh@iitj.ac.in}\\
\url{https://yashu112.github.io}, \url{https://brahadeesh1994.github.io}
\and
Department of Computer Science and Automation,
Indian Institute of Science,
Bengaluru 560012, Karnataka, India\\
\email{saraswatig@iisc.ac.in}\\
\url{https://sites.google.com/iitgn.ac.in/saraswati-girish-nanoti/home}
\and
School of Computing,
Amrita Vishwa Vidyapeetham,
Coimbatore 641112, Tamil Nadu, India\\
\email{srireveshjay@zohomail.in, s\textunderscore eshwar@cb.amrita.edu}\\
\url{https://srireveshjay.github.io/eshwarsrinivasan.github.io}
}
\maketitle              
\begin{abstract}
This paper discusses two majority coloring games on graphs.
A \emph{majority coloring} of a graph \(G = (V,E)\) is a coloring of \(V(G)\) such that, for each vertex \(v\), the number of neighbors of \(v\) with the same color as \(v\) is at most \(\deg(v)/2\).
A \emph{strong majority coloring} is a coloring of \(V(G)\) such that, for each vertex \(v\), every monochromatic subset of \(N(v)\) has size at most \(\deg(v)/2\).
The \emph{(strong) majority coloring game} is a two player Maker-Breaker-type game, in which two players Alice and Bob color the vertices of a graph \(G\) alternately, maintaining the (strong) majority condition.
The least number of colors such that Alice has a winning strategy in such a game is called the \emph{(strong) majority game chromatic number} of the graph \(G\), denoted \(\mu_g(G)\) (or \(\mathrm{Maj}_g(G)\) for the strong version).

For the majority coloring game, we prove that \(\mu_g(G) \le 3\) under the following cases:
\(G\) is a \(2\)-caterpillar, \(G\) is a rooted tree with all leaves at depth \(k \le 4\), and \(G\) is a subdivision of some graph.
The latter resolves a problem posed by Bosek--Grytczuk--Jak\'obczak~\cite{BosekGrytczukEtAl2019}, who also asked whether \(\mu_g(T) \le 3\) for every tree \(T\).
For the latter question, we discuss various difficulties that arise when natural strategies are attempted by Alice to win the majority coloring game on trees.
We also include a comparison with the marking game and relaxed coloring game on trees, and with the majority coloring game on infinite locally finite acyclic graphs \(G\) with \(\delta(G) > 1\).

For the strong majority coloring game, we compute \(\mathrm{Maj}_g(C_n)\) exactly for each cycle \(C_n\), \(n \geq 3\).
We also initiate the study of the computational complexity of the strong majority coloring game; specifically, we prove that the decision version of the \textsc{Strong Majority Game Chromatic Number} problem is PSPACE-complete.
We also show that the \textsc{Strong Majority 2-Coloring} problem is NP-complete on Eulerian graphs.

\keywords{Majority coloring game \and Strong majority coloring game \and Caterpillar \and Subdivision \and Cycle \and PSPACE-complete \and NP-complete.}
\end{abstract}

\section{Introduction}
\label{S:Introduction}

\subsection{Coloring games on graphs}
Coloring games on graphs have been an active area of study since Gardner~\cite{Gardner1,Gardner2,Gardner3} popularized the map coloring game of Brams in his ``Mathematical Games'' column in \emph{Scientific American}.
Independently, Bodlaender~\cite{Bodlaender1991} in 1991 defined it over general graphs, calling it the coloring construction game.
The basic version---upon which several natural variations have been built---is a two-player Maker-Breaker-type game.
The two players, Alice and Bob, take turns to color a vertex of a graph \(G\) with a ``legal'' color from the available palette of colors: Alice's goal is to ensure every vertex of the graph is colored, and Bob's goal is to prevent this.
For instance, we may declare that a color is legal for a vertex \(v\) if that color has not been used on its neighborhood \(N(v)\); so, Alice wins if and only if she can ensure a proper coloring of \(G\) regardless of Bob's choice of play.
The least number of colors (i.e, the smallest palette size) with which Alice wins this coloring game on \(G\) is called the \emph{game chromatic number}, and denoted \(\chi_g(G)\).
By varying the coloring condition that determines the legal moves of Alice and Bob one obtains different coloring games, such as the greedy coloring game~\cite{CostaPessoaSampaioSoares2020}, connected coloring game~\cite{CharpentierHocquardSopenaZhu2020}, ordered vertex coloring game~\cite{Hollom2024}, and so on, as well as their corresponding game coloring parameters.

It was observed early on that the game version of a coloring parameter can have intricate behavior, even in regimes where the usual (static) version of the parameter is easily understood.
For instance, the game chromatic number is not monotone with respect to induced subgraphs; in fact, for each \(n \geq 3\) there is a graph \(G\) with \(\chi_g(G) = 3\) such that \(\chi_g(G - v) = n\) for some vertex \(v \in V(G)\) (cf.\ Dinski--Zhu~\cite{DinskiZhu1999}).
For the usual proper coloring of graphs, it is trivial that if \(G\) is \(k\)-colorable, then it is also \((k+1)\)-colorable.
However, Zhu's question~\cite{Zhu1999} whether the same holds for the coloring game is still open; that is, it is unknown whether a win for Alice with a palette of size \(k\) implies a win for Alice with a palette of size \(k+1\) as well, for the coloring game.
Zhu's question can be posed for each variation of the coloring game, and though it has been settled (in either direction) for a few variants, it remains an outstanding open question in the field of coloring games on graphs.
Computational questions have also been similarly resistant to easy answers.
For instance, Bodlaender's question~\cite{Bodlaender1991} from 1991 about the hardness of the coloring game was settled by the PSPACE-completeness results of Costa--Pessoa--Sampaio--Soares~\cite{CostaPessoaSampaioSoares2020} only as recently as 2020.
Moreover, the complexity is still unknown when the number of colors is not a part of the input to the problem.

\subsection{Majority colorings as a local variant of defective colorings}
In this paper, we study the game version of the following coloring problem, known as a \emph{majority coloring} or an \emph{unfriendly partition} of a graph.
Given a simple graph \(G = (V,E)\) and a set \(C \subset \mathbb{N}\) of colors, a (possibly improper) vertex coloring \(c \colon V(G) \to C\) of \(G\) is a majority coloring if, for each \(v \in V(G)\), \(\lvert\{ w \in N(v) : c(w) = c(v) \}\rvert \leq \deg(v)/2\).
The minimum number of colors on which \(G\) has a majority coloring is its \emph{majority chromatic number}, denoted \(\mu(G)\).
The majority chromatic number itself has certain unusual properties, which deserve a few comments before we consider the game version.

Firstly, by a result of Lov\'asz~\cite{Lovasz1966} from 1966, every nonempty finite graph has \(\mu(G)=2\).
Cowen--Emerson~\cite{CowenEmerson1985,ShelahMilner1990} in 1985 conjectured that all nonempty infinite graphs also have a majority \(2\)-coloring, which came to be known as the \emph{Unfriendly Partition Conjecture}.
This conjecture has been well studied, and partial results are known for line graphs, uncountable graphs, locally finite infinite graphs and infinite graphs which have finite vertices of infinite degree~\cite{AharoniMilnerEtAl1990,Berger2017,BruhnDiestelEtAl2010,PekalaPrzybylo2025,ShelahMilner1990}, among other classes.
In particular, Shelah--Milner~\cite{ShelahMilner1990} in 1990 showed that the conjecture is false for uncountably infinite graphs, but it remains open in general for countably infinite graphs.
Moreover, every graph is known to be majority \(3\)-colorable~\cite{ShelahMilner1990}, and similar results on the list version~\cite{OubornyPitz2026} and online version~\cite{AnholcerBosekEtAl2025} has also recently been proved.

Majority colorings can be viewed as a local variant of \emph{defective colorings}, which were studied by Cowen--Cowen--Woodall~\cite{CowenCowenWoodall1986} in 1986.
In a \(d\)-defective \(r\)-coloring of a graph \(G\), at most \(d\) neighbors of any given vertex can have the same color as itself.
Thus, where the number of neighbors of a vertex permitted to have the same color as itself is proportional to its degree in a majority coloring, this bound is globally defined in a defective coloring, and hence fixed for each vertex.
We will have more to say on the relationship between the game versions of majority and defective colorings in later sections.

Another variation of majority colorings was recently introduced by Kalinowski--Kamyczura--Pil{\'s}niak--Wo{\'z}niak~\cite{KalinowskiEtal2026} in 2026: a \emph{strong majority coloring} of a graph \(G\) is a (possibly improper) vertex coloring of \(G\) such that, for each vertex \(v\), any monochromatic subset of its neighborhood has size at most \(\deg(v)/2\).
The least number of colors on which \(G\) has a strong majority coloring is called its \emph{strong majority chromatic number}, denoted \(\mathrm{Maj}(G)\).
Note that a strong majority coloring is only defined for graphs \(G\) with \(\delta(G) \geq 2\).
In~\cite{KalinowskiEtal2026} it is proved that unlike the majority chromatic number, \(\mathrm{Maj}(G)\) can be unbounded over all finite graphs \(G\).
They also prove general bounds on \(\mathrm{Maj}(G)\) in terms of \(\Delta(G)\), and compute the parameter exactly for cycles and complete graphs, as follows:
\begin{theorem}[{\cite[Observations 5 and 6, Theorem 10]{KalinowskiEtal2026}}]\label{T:strong_maj_results}
    \hfill
    \begin{enumerate}
        \item\label{T:strong_maj_results_cycles} \(\Maj(C_n) = 2\) if \(n \equiv 0 \pmod{4}\), and \(\Maj(C_n) = 3\) otherwise.
        \item\label{T:strong_maj_results_kn} \(\Maj(K_n)=3\) if \(n\neq4\), and \(\Maj(K_4) = 4\).
        \item\label{T:strong_maj_results_max_deg} If \(\delta(G) \geq 2\), then \(\Maj(G)\le 2\Delta(G)+1\).
        Moreover, for infinitely many integers \(\Delta\), there exist graphs \(G\) such that \(\Delta(G)=\Delta\) and \(\Maj(G)=2\Delta+1\).
    \end{enumerate}
\end{theorem}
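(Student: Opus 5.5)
Since this theorem collects results of Kalinowski et al., I will prove each part directly from the definition of a strong majority coloring: for every vertex \(v\), each color class restricted to \(N(v)\) has size at most \(\deg(v)/2\).

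\emph{Cycles.} In \(C_n\) every vertex \(v\) has \(\deg(v)=2\), so the condition says exactly that the two neighbours of \(v\) receive different colors. Fix a vertex \(v_i\). Its two neighbours are \(v_{i-1}\) and \(v_{i+1}\), which are at distance two along the cycle. Hence a strong majority coloring of \(C_n\) is the same thing as a proper coloring of the ``distance-two graph'' \(H\) on \(V(C_n)\), where \(v_j\) is joined to \(v_{j+2}\). Let \(d=\gcd(n,2)\). The graph \(H\) is a single cycle of length \(n\) when \(n\) is odd, and two disjoint cycles of length \(n/2\) when \(n\) is even. Note that when \(n=4\), \(H\) is \(2K_2\), which is still \(2\)-colorable. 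Therefore \(\Maj(C_n)=2\) exactly when every component of \(H\) is an even cycle (or an edge), i.e.\ when \(n\equiv 0 \pmod 4\). In every other case some component of \(H\) is an odd cycle, so \(\Maj(C_n)=3\), since every cycle is \(3\)-colorable. Concretely, for \(n\equiv 0\pmod 4\) the pattern \(1,1,2,2,\dots\) works.

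\emph{Complete graphs.} In \(K_n\) each vertex has \(n-1\) neighbours, so every color may appear at most \(\lfloor (n-1)/2\rfloor\) times in \(N(v)=V\setminus\{v\}\).

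\begin{itemize}
\item \emph{Lower bound \(\Maj(K_n)\ge 3\).} With two colors, some class has at least \(\lceil n/2\rceil\) vertices. Pick \(v\) outside this class if possible; then \(N(v)\) contains at least \(\lceil n/2\rceil > (n-1)/2\) vertices of one color. If instead the whole vertex set is a single class, the bound fails even more easily.
\item \emph{Upper bound \(\Maj(K_n)\le 3\) for \(n\ne 4\).} Split \(V\) into three classes of sizes as equal as possible. In \(N(v)\) a class has size at most \(\lceil n/3\rceil\), and the class containing \(v\) loses one vertex. I will check that \(\lceil n/3\rceil\le (n-1)/2\) whenever \(n\ge 5\), and handle \(n=3\) by hand (one vertex per color).
\item \emph{The case \(n=4\).} Each vertex has degree \(3\), so colors in \(N(v)\) may repeat at most once. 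Hence every color class has size at most \(1\): if two vertices \(x,y\) shared a color, any third vertex would see both of them. This forces \(4\) colors.
\end{itemize}

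\emph{Maximum degree bound.} I will color greedily. Define the conflict graph \(H\) on \(V(G)\) by joining \(u\) and \(w\) when they have a common neighbour. Any coloring that is proper on \(H\) gives each vertex a neighbourhood with all colors distinct, so every monochromatic subset of \(N(v)\) has size \(1\le \deg(v)/2\), using \(\delta(G)\ge 2\). The vertex \(u\) has at most \(\Delta(\Delta-1)\) conflicts, which is too many. The main obstacle is therefore reaching \(2\Delta+1\), and this needs the tolerance \(\deg(v)/2\) rather than distinct colors.

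For that step I plan to use a different greedy order. Order the vertices arbitrarily and, when coloring \(u\), consider each neighbour \(v\) of \(u\). Among the already colored vertices of \(N(v)\), at most two colors can be dangerous: once a color reaches \(\lfloor\deg(v)/2\rfloor\) occurrences in \(N(v)\), at most two colors can be that full. Forbidding the full colors over all neighbours \(v\) of \(u\) rules out at most \(2\deg(u)\le 2\Delta\) colors, which leaves one free among \(2\Delta+1\). I still need to check carefully that "at most two full colors" holds; since \(\lfloor\deg(v)/2\rfloor\ge 1\), a full color uses at least half of \(N(v)\) rounded down, and refining this to at most two colors may need a count of the partially colored neighbourhood.

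For the extremal examples I will take the incidence (subdivision-type) graphs of complete graphs, in which many degree-\(2\) vertices force pairs of vertices to receive distinct colors. The aim is to choose a structure whose forced-distinct pairs induce \(K_{2\Delta+1}\). I will try the subdivision of \(K_{2\Delta+1}\) where the degree-\(2\) vertices are subdivision vertices, and then adjust degrees, tentatively with \(\Delta\) odd, to make \(\Delta(G)=\Delta\). This construction is the least certain part of the plan.
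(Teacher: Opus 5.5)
The paper quotes this result from Kalinowski et al.\ without proof, so your plan is judged on its own merits. Parts 1 and 2 are correct as written, and so is the greedy upper bound in part 3. The count you left open closes at once. While $u$ is uncolored, at most $\deg(v)-1$ vertices of $N(v)$ are colored. A neighbor $v$ forbids a color at $u$ only if that color already occupies $\lfloor \deg(v)/2\rfloor\ge 1$ of them. Since $3\lfloor \deg(v)/2\rfloor > \deg(v)-1$ for $\deg(v)\ge 2$, each neighbor $v$ forbids at most two colors, and only one if $\deg(v)$ is even. You do need that $u$ is uncolored here: for $\deg(v)=3$, a fully colored rainbow neighborhood has three full colors.

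The genuine gap is the extremal family. The subdivision of $K_{2\Delta+1}$ has branch vertices of degree $2\Delta$, so it only certifies $\Maj(G)\ge \Delta(G)+1$. No adjustment of degrees rescues it: a degree-$2$ vertex forces only one pair to differ, so a vertex that must differ from $2\Delta$ others needs $2\Delta$ such neighbors.

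The missing observation is that a degree-$3$ vertex also tolerates at most $\lfloor 3/2\rfloor=1$ neighbor per color. Hence it forces all three of its neighbors to receive pairwise distinct colors. Each such neighbor of $x$ then accounts for two of the vertices $x$ must differ from, which is exactly the missing factor of two.

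Concretely, take a Steiner triple system on $2\Delta+1$ points. It exists iff $2\Delta+1\equiv 1,3 \pmod 6$, i.e.\ for every $\Delta\ge 3$ with $\Delta\equiv 0,1\pmod 3$. Let $G$ be its point--block incidence graph. Each point lies in exactly $\Delta$ blocks and each block vertex has degree $3$, so $\delta(G)\ge 2$ and $\Delta(G)=\Delta$. Any two points share a block, so all $2\Delta+1$ points receive distinct colors. Combined with the upper bound, this gives $\Maj(G)=2\Delta+1$. For $\Delta=3$ this is the Heawood graph of the Fano plane.
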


\subsubsection{Related coloring notions.}
We shall focus on the game versions of majority colorings and strong colorings in this work.
In particular, these are vertex colorings, and we note that majority edge colorings (the static version) have also been studied in the literature~\cite{BockKalinowskiEtAl2023,KalinowskiPilsniakStawiski2025,PekalaPrzybylo2025}.

We also briefly survey several variants of improper colorings which are similar in spirit to majority colorings but which behave quite differently, and are therefore not the focus of this work.
\begin{itemize}
    \item Hind--Molly--Reed~\cite{HindMollyReed1997} in 1997 introduced \emph{r-frugal colorings} of graphs.
    A coloring of a graph is called \(r\)-frugal if no vertex has more than \(r\)-members of any color class in its neighborhood.
    The \(r\)-frugal chromatic number of a graph \(G\), sometimes denoted \(\chi^f_r(G)\)~\cite{BresarHuSamadi2026}, is the least number of colors in an \(r\)-frugal coloring of \(G\).
    \item \emph{Alliances} in graphs were introduced by Kristiansen--Hedetniemi--Hedetniemi~\cite{KristiansenHedetniemiHedetniemi2004} in 2004.
    One such notion is called a \emph{defensive alliance} in a graph \(G\), which is a set of vertices \(S\subseteq V(G)\) having the property that \(\forall v\in S\), \(\vert N[v]\cap S \vert \ge \vert N(v) \cap (V-S) \vert\).
    \item Motivated by unfriendly partitions and related notions, \emph{cost effective domination} in graphs was studied by Tabitha L. McCoy in 2012~\cite{McCoy2012}.
    A vertex \(v\) in a dominating set \(D\) is called cost effective if it is adjacent to at least as many vertices in \(V \setminus D\) as it is in \(D\).
    If all vertices in a dominating set \(D\) are cost effective, then \(D\) is called a cost effective dominating set and the minimum cardinality of such a set is called the \emph{cost effective domination number} of the graph, denoted \(\gamma_{c\epsilon}(G)\).
	\item Bujt{\'a}s--Sampathkumar--Tuza--Pushpalatha--Vasundhara~\cite{BujtasEtal2011} in 2011 introduced the \emph{k-improper C-colorings} of graphs and defined the \emph{k-improper upper chromatic number} \(\bar\chi_{k\text{-imp}}(G)\) of a graph \(G\) as the \emph{maximum} number of colors that can be used to color \(G\) such that for any \(v\in V(G)\), at most \(k\) vertices in neighborhood of \(v\) are colored differently than \(v\).
	\item Motivated by majority colorings, Bujt{\'a}s--Dettlaff--Furma{\'n}czyk--Laskowska~\cite{BujtasDettlaffEtal2026} in 2026 introduced the \emph{Majority C-coloring} of a graph \(G\), in which at least half the neighbors of any given vertex should have the same color as that vertex.
    The maximum number of colors that permit a majority C-coloring of \(G\) is called the majority C-chromatic number, denoted \(\bar\chi_{\ge}(G)\).
	Bujt{\'a}s--Dettlaff--Furma{\'n}czyk--Laskowska--Tuza~\cite{BujtasDettlaffEtal2026_2} in 2026 then studied the majority C-colorings in cartesian product graphs.
	\item Freyberg--Marr~\cite{FreybergMarr2024} in 2024 introduced \emph{neighborhood balanced colorings}:
	it is a \(2\)-coloring of the vertices of a graph \(G\) such that each vertex has equal number of neighbors of each color.
	Such a coloring partitions \(V(G)\) into two sets such that each vertex has an equal number of neighbors in each set.
	\item Almeida--Gupta--Pawar--Singh~\cite{AlmeidaSinghEtal2025} in 2025 introduced \emph{neighborhood balanced \(k\)-colorings}, generalizing the notion of neighborhood balanced colorings: it is a coloring of a graph \(G\) using \(k\) colors such that each vertex has an equal number of neighbors of each color.
    \item Chellali--Hedetniemi--Meddah~\cite{ChellaliHedetniemiMeddah2025} in 2025 studied \emph{minority sets} in graphs which are defined as follows: A set of vertices, \(M \subseteq V\) of a graph \(G=(V,E)\) is called a minority set if each vertex \(v\in M\) has strictly more neighbors in \(V\setminus M\) than it has in \(M\).
    Note that an \emph{unfriendly partition} (or a 2-majority coloring) of a graph \(G\) gives two minority sets \(M_1\) and \(M_2\) such that \(M_1 \cup M_2 = V(G)\) and \(M_1 \cap M_2 = \phi \).
    \item Chellali--Hedetniemi--Meddah~\cite{ChellaliHedetniemiMeddah2026} in 2026 studied \emph{majority sets} in graphs.
    A subset of vertices \(S\subseteq V\) of a graph \(G\) is called a majority set if every vertex \(v\in S\) has more neighbors in \(S\) than it has in \(V \setminus S\).
    
\end{itemize}

\subsection{Majority and relaxed (defective) coloring games: known results}

\subsubsection{The majority coloring game.} The game version of majority colorings was introduced by Bosek--Grytczuk--Jak{\'o}bczak~\cite{BosekGrytczukEtAl2019} in 2019.
In the \emph{majority coloring game} both Alice and Bob are required to maintain the majority condition at each stage; that is, at every partial coloring as the game progresses, it should hold that for every vertex \(v\), at most half the neighbors of \(v\) have the same color as \(v\). (Of course, if \(v\) is uncolored, then this condition is automatically satisfied at \(v\).)
The minimum number of colors on which Alice has a winning strategy, that is, she can get a majority coloring of \(G\) regardless of Bob's strategy, is called the \emph{majority game chromatic number} of \(G\), and is denoted \(\mu_g(G)\).

It was shown in~\cite{BosekGrytczukEtAl2019} that \(\mu_g(G)\) is bounded above by the \emph{game coloring number} of \(G\), \(\mathrm{col}_g(G)\), which is defined via the \emph{marking game}:
the marking game on a graph \(G\) of order \(n\) is a two-player game where the players alternately choose vertices of \(G\), thereby inducing a linear order on \(V(G)\), say \(v_1 \prec v_2 \prec \dotsb \prec v_n\).
With respect to this ordering, the set of \emph{back neighbors} of \(v_i\) is
\(
B(v_i) := \{u \in N(v_i) : u \prec v_i \}
\).
The \emph{back degree} of \(v_i\) is the cardinality of the set \(B(v_i)\).
The back degree of \(G\) is defined as
\(
B(G) := \max_{1\le i\le n}\{B(v_i)\}
\).
The goal of Alice is to minimize \(B(G)\) whereas the goal of Bob is to maximize it.
The game coloring number \(\mathrm{col}_g(G)\) is the least \(k\) such that Alice has a strategy for the marking game on \(G\) such that \(B(G) < k\).
\begin{theorem}[{\cite[Theorem 2]{BosekGrytczukEtAl2019}}]
	For any graph \(G\), \(\mu_g(G) \leq \mathrm{col}_g(G)\).
\end{theorem}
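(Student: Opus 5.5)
Let \(k = \mathrm{col}_g(G)\) and fix a winning strategy \(\sigma\) for Alice in the marking game on \(G\), so that every vertex has at most \(k-1\) back neighbors in the induced order. The plan is to have Alice play \(\sigma\) in the majority coloring game, treating each coloring move as the marking of the colored vertex. Concretely, Alice keeps track of the order in which vertices have been colored. When Bob colors a vertex \(v\), she pretends Bob marked \(v\). When it is her turn, she consults \(\sigma\) to obtain a vertex \(u\) to mark, and then colors \(u\) with a suitable color from the palette \(\{1,\dots,k\}\).

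Since Bob may choose any uncolored vertex, and legality of his color is irrelevant to the marking simulation, the sequence of colored vertices is always a legal play of the marking game in which Alice follows \(\sigma\). Hence, at every stage, each vertex \(v\) has at most \(k-1\) back neighbors, namely neighbors colored before \(v\). It remains to show two things. First, Alice always has a legal color for the vertex \(u\) prescribed by \(\sigma\). Second, no uncolored vertex ever becomes uncolorable, so the game never stalls before \(G\) is fully colored.

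The key claim is an invariant: at any partial coloring reached this way, every uncolored vertex \(v\) has a color \(c\in\{1,\dots,k\}\) whose assignment to \(v\) keeps the majority condition at \(v\) and at every colored neighbor of \(v\). At the time \(v\) is colored, it has at most \(k-1\) colored neighbors, all of them back neighbors.

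The condition at \(v\) itself is harmless. Among \(k\) colors, at least one color \(c\) is absent from \(v\)'s colored neighbors, because \(v\) has at most \(k-1\) of them. Choosing such a \(c\) gives \(v\) no same-colored neighbor at present.

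The main obstacle is the condition at an already colored neighbor \(w\) of \(v\). Coloring \(v\) with \(c(w)\) may push \(w\) over \(\deg(w)/2\), and the color absent from \(N(v)\) already avoids every \(c(w)\) with \(w\in N(v)\) colored. So the color missing from \(v\)'s colored neighborhood makes \(v\) conflict with no neighbor at all, which is exactly the greedy argument behind \(\chi \le \mathrm{col}\).

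Thus the invariant holds, and this gives a proper partial coloring throughout. For Alice's own moves she uses this greedy choice. For Bob's moves the invariant still guarantees that Bob always has some legal move, namely a proper one. I still need to ensure that Bob's possibly improper colorings do not break later availability. The count of colored neighbors is independent of their colors, so a missing color exists regardless, and it never creates a violation at any vertex. Therefore every vertex can always be legally colored, the game ends with all of \(V(G)\) colored, and Alice wins with \(k\) colors. Hence \(\mu_g(G)\le \mathrm{col}_g(G)\).
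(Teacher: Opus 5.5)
Your proposal is correct and is essentially the paper's argument: Alice simulates a winning marking-game strategy, so every uncolored vertex has at most \(\mathrm{col}_g(G)-1\) colored neighbors, only their colors can be forbidden there, and so a legal color always remains. Your version is more careful than the paper's one-line sketch, because you also rule out Bob creating an uncolorable vertex. Two small fixes: make explicit that the bound for an uncolored vertex at an intermediate position comes from extending the current play to a complete marking play consistent with \(\sigma\), and drop the stray remark that the partial coloring stays proper, since Bob's colorings need not be.
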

Essentially, if Alice follows the marking game strategy in the majority coloring game, then fewer than \(\mathrm{col}_g(G)\) colors are forbidden for the vertex that she chooses in each round, so the bound follows.
On the other hand, \(\mu_g\) is unbounded on the class of bipartite graphs and the class of \(2\)-degenerate graphs~\cite[Theorems 1 and 5]{BosekGrytczukEtAl2019}.
The authors also prove the following result for subdivisions of a graph:
\begin{theorem}[{\cite[Theorem 4]{BosekGrytczukEtAl2019}}]
	Let \(G\) be a graph and let \(G'\) be the \(1\)-subdivision of \(G\).
	Then, \(\mu_g(G') \leq 3\).
\end{theorem}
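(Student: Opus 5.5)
The plan is to split \(V(G')\) into \emph{branch vertices} (the vertices of \(G\)) and \emph{subdivision vertices} (one of degree \(2\) on each edge of \(G\)), and to show that with palette \(\{1,2,3\}\) only branch vertices can ever become uncolorable. Alice's strategy will then be purely defensive: answer each Bob move with a move that removes the one kind of threat that can block a branch vertex.

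\textbf{Step 1: subdivision vertices are never blocked.} Let \(s\) subdivide \(uv\). Coloring \(s\) with \(c\) can be illegal for only two reasons. The first is that \(u\) and \(v\) are both colored \(c\), so \(s\) would have two same-colored neighbors out of \(2\). The second is that a colored endpoint, say \(u\) with \(c(u)=c\), already has \(\deg(u)/2\) neighbors of color \(c\). In either case the forbidden color lies in \(\{c(u),c(v)\}\). So at most two colors are forbidden, and \(s\) always has a legal color. It follows that Bob can only win by blocking a branch vertex.

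\textbf{Step 2: forbidden colors at a branch vertex.} Let \(v\) be an uncolored branch vertex. A color \(c\) is forbidden at \(v\) for one of two reasons:
\begin{itemize}
\item (a) more than \(\deg(v)/2\) of the neighbors \(s\) of \(v\) are already colored \(c\);
\item (b) some neighbor \(s\) of \(v\), subdividing an edge \(vw\), satisfies \(c(s)=c(w)=c\). We call such a pair \((s,w)\) a \emph{threat} to \(v\).
\end{itemize}
At most one color can be forbidden for reason (a). Hence \(v\) can be blocked only if it has threats in at least two distinct colors.

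\textbf{Step 3: Alice's invariant and response rule.} Alice maintains the following invariant: every uncolored branch vertex has threats in at most one color, and if it has a threat then the count-forbidden color of (a) equals the threatening color or is absent.
\begin{itemize}
\item A Bob move on a subdivision vertex \(s\) of \(vw\) creates at most one new threat, namely to \(v\) when \(c(s)=c(w)\). Alice then immediately colors \(v\). By the invariant at most two colors are forbidden at \(v\), so a legal color exists.
\item A Bob move on a branch vertex \(w\) can create threats on several vertices \(v\) at once, one for each colored neighbor \(s\) of \(w\) with \(c(s)=c(w)\) and with the far endpoint uncolored. To make this dangerous, Bob needs earlier monochromatic configurations that Alice could not answer, and this is exactly what the invariant should control.
\end{itemize}
To arrange that control, Alice prioritizes as follows. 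Whenever a subdivision vertex \(s\) of \(vw\) is colored while both \(v\) and \(w\) are uncolored, she responds by coloring one endpoint with a color different from \(c(s)\). Whenever she has a free move, she colors an uncolored branch vertex incident to the most colored subdivision vertices.

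\textbf{Main obstacle.} The hard step is the branch-vertex case. When Alice colors an endpoint \(u\), she must avoid the colors of all colored neighbors \(s'\) of \(u\) whose far endpoint is still uncolored, because using such a color creates new threats. If \(u\) has many such neighbors in different colors, she may be unable to avoid all of them.

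The first fix to try is a potential (charging) argument. Each colored subdivision vertex with both endpoints uncolored is charged to the endpoint Alice will color next, and one then shows that every branch vertex accumulates charges in at most one color before Alice colors it, since Bob can add at most one such charge per round and Alice clears one per round.

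If this charging does not close, the fallback is a different response rule: Alice answers Bob's move on \(s\) by coloring the \emph{other subdivision vertices} at the endpoint, choosing them in the threatening color so as to saturate it. This forces the two potential threat colors to coincide, using the freedom from Step 1 that subdivision vertices always have a legal color.
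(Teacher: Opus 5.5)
Your Steps~1 and~2 are correct. Your response rules are also the core of the paper's strategy in Section~\ref{S:Subdivisions}: when Bob colors a subdivision vertex, color an uncolored endpoint avoiding its color, and color a threatened vertex at once. However, the proof is not completed. The Step~3 invariant is never shown to survive Bob's branch-vertex moves or Alice's own branch-vertex moves; you only say the invariant ``should control'' them. The ``main obstacle'' is left open, and the charging argument and the fallback rule are proposed but not carried out. As written, nothing in the proposal rules out an uncolored branch vertex acquiring threats in two colors, so this is a genuine gap.

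The missing idea is a stronger invariant, which your own rules already maintain provided Alice opens on a branch vertex. \emph{After each of Alice's moves, every colored subdivision vertex $s$ has at least one colored endpoint, and if only one endpoint $u$ is colored then $c(u)\neq c(s)$.} In particular, no threat exists after Alice's move. With this invariant, every case closes:
\begin{itemize}
\item \textbf{Bob colors a branch vertex $w$.} No threat is created, since every colored subdivision neighbor of the previously uncolored $w$ already had its far endpoint colored.
\item \textbf{Alice colors an uncolored branch vertex.} By the same reasoning, the only colored subdivision neighbor with an uncolored far endpoint is the one Bob may have just colored. So she creates no threat as long as she avoids its color, and at most one color is forbidden by the count.
\item \textbf{Bob colors $s$ on $vw$ with both ends uncolored.} Then $w$ has at least two legal colors, so Alice can give it one different from $c(s)$.
\item \textbf{Bob colors $s$ with $c(s)=c(w)$ and $v$ uncolored.} Then $v$ has exactly one threat plus at most one count-forbidden color, so Alice colors $v$ immediately.
\end{itemize}
Each case restores the invariant. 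So the scenario in your main obstacle, a branch vertex with several pending subdivision neighbors in different colors, never occurs, and no charging is needed.

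This is essentially the content of the paper's two lemmas: an uncolored old vertex with a majority-colored neighbor can only be created by Bob's last move, and Alice colors it next. The paper spends its free moves differently, coloring subdivision vertices next to colored branch vertices with a color different from both neighbors, rather than coloring branch vertices. Both choices preserve the invariant.
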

Bosek--Grytczuk--Jak\'obczak also remark that a similar result can be proved for \(2\)-subdivisions of a graph, and that the general case is not clear.

In a recent work~\cite{ChawdaNanotiSankarnarayanan2026}, the first three authors of this article proved bounds on \(\mu_g\) for certain classes of graphs:
\begin{theorem}[{\cite[Theorem 3.7, Section 4]{ChawdaNanotiSankarnarayanan2026}}]\label{T:FSTTCS}
	\hfill
	\begin{enumerate}
		\item\label{T:FSTTCSa} If \(G\) is a tree with \(\Delta(G) \leq 4\), then \(\mu_g(G) \leq 3\).
		\item\label{T:FSTTCSb} If \(G\) is a nonempty path, star, or complete graph, then \(\mu_g(G) = 2\).
	\end{enumerate}
\end{theorem}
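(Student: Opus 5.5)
The plan is to handle the two parts separately. In both, the basic step is to bound how many colors can be \emph{blocked} at an uncolored vertex \(v\) in a partial coloring.

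There are two ways a color \(c\) can be blocked at \(v\).
\begin{itemize}
\item \emph{Own blocking:} coloring \(v\) with \(c\) would give \(v\) more than \(\deg(v)/2\) neighbors of color \(c\).
\item \emph{Neighbor blocking:} some neighbor \(u\) of color \(c\) is \emph{saturated}, meaning it already has exactly \(\lfloor \deg(u)/2\rfloor\) neighbors of color \(c\).
\end{itemize}
When \(\deg(v)\le 4\), own blocking forbids at most one color, since \(\deg(v)=1\) needs one neighbor of color \(c\), degrees \(2,3\) need two, and degree \(4\) needs three. So with a palette of size \(3\), a vertex \(v\) can become uncolorable only if at least two further colors are forbidden through saturated neighbors.

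\textbf{Part (b).} The lower bound \(\mu_g\ge 2\) is immediate for a nonempty graph, since with one color the two ends of an edge are neighbors with the same color.

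For the upper bound on \(K_n\) with two colors, I will show that Alice wins whatever either player does. Color \(c\) is blocked at an uncolored vertex only if the class of color \(c\) already has size \(s\) with \(s>(n-1)/2\). If both colors are blocked, then more than \(n-1\) vertices are already colored, so no uncolored vertex remains.

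For the star \(K_{1,m}\), Alice colors the center first with color \(1\). Every leaf then takes color \(2\), which is always legal, and the center never has a neighbor of its own color.

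For the path, both colors can be blocked at \(v\) only in the local pattern \(a\,a\,v\,b\,b\) with \(a\ne b\), or when an end vertex is blocked by its neighbor together with that neighbor's saturation. Alice's strategy is to answer each of Bob's moves by coloring the uncolored vertex next to the two-vertex monochromatic run Bob has just created or extended. She chooses a color that cannot complete such a pattern. I need to check that one Bob move threatens at most one vertex, which holds because a path has maximum degree \(2\).

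\textbf{Part (a).} Here I will use a similar threat-response strategy with \(3\) colors. Call an uncolored vertex \emph{critical} if two colors are blocked at it. Then:
\begin{enumerate}
\item A critical vertex still has a legal color, so Alice can color it on her turn.
\item The key lemma: after Alice's move no critical vertex exists, and a single move by Bob creates at most one critical vertex, which Alice then colors.
\end{enumerate}
For step 2, suppose Bob colors a vertex \(x\). This can only change the blocked sets of uncolored vertices at distance at most \(2\) from \(x\): those adjacent to \(x\), and those adjacent to a neighbor \(u\) of \(x\) that \(x\) made saturated.

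Since the tree has no cycles and \(\Delta\le4\), I will enumerate the cases according to the degree of \(x\) and of the newly saturated \(u\). In each case I will show that at most one vertex becomes critical and none becomes fully blocked. The fact that makes this work is that a vertex with no critical neighbors before Bob's move gains at most one new blocked color from Bob's move.

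\textbf{Main obstacle.} The hardest part is this case analysis for part (a). Bob's move at \(x\) could simultaneously saturate several neighbors of \(x\), each of which could neighbor-block the same color at different uncolored vertices. It could also create a critical vertex while Alice's own previous response created another.

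If the simple invariant fails, I would first strengthen it. The strengthened invariant would require that every colored vertex has at most one uncolored neighbor at which it blocks a color. Alice would then spend her move restoring this invariant near Bob's last move, and I would use the tree structure, rooted at Bob's move, to argue that only one branch can be damaged per move.
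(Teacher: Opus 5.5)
Your arguments for \(K_n\) and for stars are correct, but part (a) and the path case have genuine gaps. In part (a), the key lemma is a property of a strategy, not of positions, and you never say what Alice does when no critical vertex is present. As a claim about a single move of Bob, the lemma is false. Suppose Bob colors a degree-\(4\) vertex \(z\) with \(c\) while two of its neighbors \(u_1,u_2\) already have color \(c\); this is legal, since \(\lfloor 4/2\rfloor=2\). Then \(z\) becomes majority colored and blocks \(c\) at both of its uncolored neighbors \(y_1,y_2\). Suppose each \(y_i\) already had a color other than \(c\) blocked by a colored leaf child. Then both become critical at once, Alice can color only one, and Bob kills the other by coloring a second leaf child of it with the third color. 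If moreover \(\deg(u_i)=3\), each \(u_i\) also becomes majority colored by the same move and blocks \(c\) at two further vertices. So your fallback invariant fails too: one move creates three colored vertices that each block a color at two uncolored neighbors, and Alice can repair only one per move.

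The missing idea is the one behind the cited proof, as the paper summarizes it. Root the tree at Alice's first vertex and maintain that no majority colored vertex has an uncolored parent. Whenever Bob creates such a vertex, Alice immediately colors its parent, and she chooses least-used colors so that she never creates one herself. This invariant is sustainable because a move of Bob at \(x\) creates at most one violation. Either \(x\) itself is a violation (if its parent is uncolored), or the parent of \(x\) is (if the grandparent is uncolored), but never both; children of \(x\) that become majority colored have the colored parent \(x\). Hence an uncolored vertex never has more than two majority colored neighbors, namely its parent and one child just completed by Bob. Moreover, \(\lfloor d/2\rfloor+3>d\) for \(d\le 4\), so configuration 2 cannot occur: an own-blocked color never coexists with two colors blocked by neighbors. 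This is sharper than your ``at most one own-blocked color''. So a vertex dies only when it has three majority colored neighbors of distinct colors, which never happens. In particular, own-blocked colors play no role in a loss, so your notion of ``critical'', which counts them, is the wrong quantity to control.

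For paths, the claim that one move of Bob threatens at most one vertex is false. Completing a monochromatic pair makes both of its vertices majority colored, so that color becomes blocked on both sides of the pair. Take consecutive vertices \(0,1,\dots,9\) of a long path with \(2\) and \(7\) colored \(b\), \(4\) colored \(a\), and all others among them uncolored, and let Bob color \(5\) with \(a\). Now \(a\) is blocked at both \(3\) and \(6\). Bob threatens to color \(1\) with \(b\), which makes \(2\) majority colored and leaves \(3\) with no legal color. He also threatens to color \(8\) with \(b\), which kills \(6\) in the same way. Alice can parry only one of these threats. Your rule reacts only to runs, so it prescribes nothing for Bob's moves at \(2\), \(4\) and \(7\), which is exactly where this set-up has to be prevented. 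It also does not cover Bob coloring an end vertex, which is majority colored at once. The path case therefore needs Alice's answer to every move of Bob to be specified, together with an invariant that excludes such double threats.
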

Faigle--Kern--Kierstead--Trotter~\cite{FaigleKernEtAl1993} showed that \(\mathrm{col}_g(T) \leq 4\) for any tree \(T\), so Theorem~\ref{T:FSTTCS}(\ref{T:FSTTCSa}) improves this bound for a certain class of trees.
Whether the bound can be improved for general trees is an open question raised in~\cite{BosekGrytczukEtAl2019}.

Since \(\mu(G) = 2\) for every finite graph, computing the majority chromatic number is a constant-time problem.
However, the pre-coloring extension problem for majority colorings can be much more complicated.
This was first introduced in~\cite{ChawdaNanotiSankarnarayanan2026}, where the authors proved hardness results were proved for the static and game versions of the pre-coloring extension problem:
\begin{theorem}[{\cite[Theorem 5.2, 5.4, 6.1, 6.5]{ChawdaNanotiSankarnarayanan2026}}]\label{T:FSTTCS_complexity}
	\hfill
    \begin{enumerate}
        \item\label{T:FSTTCS_complexity_1} The \textsc{Extended Majority Coloring} problem is \NP-complete, even when $k=3$ and the input graph is planar with degree at most $12$.
        \item\label{T:FSTTCS_complexity_2} The \textsc{Extended Majority $2$-Coloring} problem is \NP-complete, even on bipartite graphs. 
        \item\label{T:FSTTCS_complexity_3} The \textsc{Extended Majority Game Chromatic Number $(G,n,k)$} is \PSPACE-complete.
        \item\label{T:FSTTCS_complexity_4} The \textsc{Extended Majority $2$-Coloring Game} is \PSPACE-complete, even for bipartite graphs.
    \end{enumerate}
\end{theorem}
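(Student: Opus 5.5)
Since this theorem is quoted from~\cite{ChawdaNanotiSankarnarayanan2026}, the plan is to reconstruct its four parts. Membership in \NP{} for parts~(\ref{T:FSTTCS_complexity_1}) and~(\ref{T:FSTTCS_complexity_2}) is immediate, since the majority condition can be checked in linear time. Membership in \PSPACE{} for parts~(\ref{T:FSTTCS_complexity_3}) and~(\ref{T:FSTTCS_complexity_4}) also follows routinely: every play has at most \(n\) moves, so a depth-first evaluation of the game tree uses polynomial space. All the work is in the hardness reductions. They rest on one basic tool, which I would isolate as a lemma.

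\emph{Leaf lemma.} Let \(\ell\) be a precolored leaf of color \(c\) attached to an uncolored vertex \(v\). The majority condition at \(\ell\) then forbids \(v\) from receiving \(c\), because \(1 > 1/2\). Precolored padding neighbors of \(v\) do a second job: they tune how many of \(v\)'s other neighbors may share \(v\)'s color. Together, these two devices let us impose list restrictions and threshold constraints locally.

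For part~(\ref{T:FSTTCS_complexity_2}), I would reduce from \textsc{Not-All-Equal 3-SAT}.
\begin{enumerate}
\item Each variable becomes a vertex \(x\). Attach to \(x\) a balanced set of precolored padding of both colors, large enough that the majority condition at \(x\) itself is always satisfied.
\item Negated occurrences are routed through a short even-length ``inverter'' path. Its internal vertices carry leaves forcing their colors to alternate, which keeps the graph bipartite.
\item Each clause gets two precolored clause vertices, one of each color. Each is adjacent to the three literal vertices, plus padding chosen so that exactly \(\lfloor \deg/2\rfloor\) same-colored neighbors are tolerated. The padding is calibrated so that all three literals sharing that vertex's color is the only violation.
\end{enumerate}
Together the two clause vertices encode ``not all equal''. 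Variables and inverters lie on one side and clause and padding vertices on the other, so the graph is bipartite.

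For part~(\ref{T:FSTTCS_complexity_1}), I would reduce from proper \(3\)-coloring of planar graphs with maximum degree \(4\), which is \NP-complete.
\begin{enumerate}
\item Subdivide each edge \(uv\) by a vertex \(w\).
\item Use leaves together with extra precolored padding at \(w\) to tighten its threshold. The aim is that \(c(u)=c(v)\) forces a violation either at \(w\) or at \(u,v\), whatever color \(w\) takes, while every proper coloring of the original graph extends.
\item Each original vertex thus receives at most \(4\) gadget neighbors plus a bounded number of padding leaves. I would count these to stay within degree \(12\).
\end{enumerate}
Adding leaves preserves planarity.

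Parts~(\ref{T:FSTTCS_complexity_3}) and~(\ref{T:FSTTCS_complexity_4}) would reduce from \textsc{TQBF}, or more conveniently from a Maker--Breaker POS-CNF game. The plan is to reuse the static gadgets, with variable vertices chosen by Alice or Bob according to their quantifiers. A deviating player must be punished, so I would add ``threat'' gadgets. These are vertices that, if left uncolored while the opponent plays elsewhere, become uncolorable (bad for Alice) or can be safely colored (a waste for Bob). This enforces that play proceeds variable by variable in quantifier order. Bipartiteness in part~(\ref{T:FSTTCS_complexity_4}) is inherited from part~(\ref{T:FSTTCS_complexity_2})'s construction.

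I expect the main obstacle to be this move-order forcing in the game reductions. One must verify that no out-of-order or gadget-internal move gives either player an advantage, which requires a careful case analysis of the padding thresholds. The precise threshold calibration in the \(3\)-color edge gadget under the degree-\(12\) bound is a secondary difficulty.
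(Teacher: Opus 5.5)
\textbf{There is a genuine gap in part~(1): the edge gadget cannot work.} No calibration of leaves and padding at a single uncolored subdivision vertex \(w\) fixes it. Suppose \(c(u)=c(v)=a\) and \(w\) takes a color \(b\neq a\). Then \(w\) is not a same-colored neighbor of \(u\) or \(v\), and \(u,v\) are not same-colored neighbors of \(w\). So whether \(b\) is legal at \(w\) is decided by the padding alone, independently of \(u\) and \(v\). To block \(w\) for every \(a\), the padding must therefore block all three colors outright. But then \(w\) is uncolorable even when \(c(u)\neq c(v)\), and every instance becomes a no-instance. An uncolored checker can always escape to a color different from \(a\), so the checker has to be precolored. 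For each edge \(uv\) and each color \(i\in\{1,2,3\}\), add a vertex precolored \(i\) and adjacent to exactly \(u\) and \(v\). Having degree \(2\), it tolerates one neighbor of color \(i\), so it forbids precisely \(c(u)=c(v)=i\). An original vertex of \(G\)-degree \(d\le 4\) then has \(3d\le 12\) neighbors, exactly \(d\) of each color, so its own condition \(d\le\lfloor 3d/2\rfloor\) holds automatically. No padding is needed, planarity is preserved, and this is where the bound \(12=3\cdot 4\) comes from.

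\textbf{Part~(3).} The same device removes what you call the main obstacle, which your proposal leaves unconstructed. The threat gadgets are never built or checked. Moreover, a Maker--Breaker POS-CNF source, where each player owns a value, does not match a game in which both players may use every color. With \(k\) precolored checkers per edge, only the original vertices are uncolored, and a color is legal at \(u'\) exactly when no \(G\)-neighbor of \(u\) carries it. The extended game on \(H\) is then move-for-move the proper \(k\)-coloring game on \(G\), so hardness transfers from~\cite{CostaPessoaSampaioSoares2020} with no order forcing at all. The paper only cites this theorem, but compare its reduction in Section~\ref{S:Complexity}, where the intermediate vertices are uncolored and must be neutralized by a pairing response.

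\textbf{Part~(4).} You still need a source that is \PSPACE-hard with only two values, for instance a formula game in which either player may set any variable to either value in any order. The hardness in~\cite{CostaPessoaSampaioSoares2020} does not supply this. You must also handle the pass-like forced move of coloring \(\bar x\) once \(x\) is colored.

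\textbf{Part~(2) also needs repair.}
\begin{itemize}
\item Balanced padding at \(x\) raises its same-colored count and \(\lfloor\deg(x)/2\rfloor\) by the same amount, so it never relaxes \(x\)'s condition. Padding leaves of both colors would make \(x\) uncolorable.
\item Vertices whose colors are fixed by leaves carry no information, so a path of them cannot invert.
\item An inverter path has adjacent internal vertices, which contradicts your bipartition.
\end{itemize}
Instead, force \(c(x)\neq c(\bar x)\) with two precolored degree-\(2\) vertices of colors \(1\) and \(2\), each adjacent to both \(x\) and \(\bar x\). Give each precolored clause vertex one precolored leaf of the other color, so that it has degree \(4\). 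Then literal vertices and clause leaves lie on one side and the remaining precolored vertices on the other. Each literal vertex's own condition holds automatically, because its precolored neighbors come in pairs of opposite colors.
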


\subsubsection{The relaxed coloring game.} In contrast with the majority coloring game, the game version of defective colorings, also known as the \emph{\(d\)-relaxed \(r\)-coloring game}, was introduced by Chou--Wang--Zhu~\cite{ChouWangZhu2003} in 2003 and has since been heavily studied.
In the \(d\)-relaxed \(r\)-coloring game played on a graph \(G\), Alice and Bob play on a palette of \(r\) colors, maintaining the \(d\)-defective coloring condition at each stage.
The smallest \(r\) which guarantees a winning strategy for Alice is called the \emph{\(d\)-relaxed game chromatic number} of \(G\), denoted \(\chi_g^d(G)\).
Several results regarding the relaxed game chromatic number of trees~\cite{ChouWangZhu2003,HeWuZhu2004}, outerplanar graphs~\cite{HeWuZhu2004,DunnKierstead2004,WuZhu2006,WuZhu2008}, graphs with cut-vertices~\cite{Sidorowicz2015} and multipartite graphs~\cite{Dunn2012} have been established.
We will draw on the following results of Chou--Wang--Zhu~\cite{ChouWangZhu2003} and He--Wu--Zhu~\cite{HeWuZhu2004} to prove similar results for the majority coloring game in Section~\ref{S:Infinite_Acyclic}.
\begin{theorem}[{\cite{ChouWangZhu2003,HeWuZhu2004}}]\label{T:defective-game-trees}
	For a nonempty tree \(T\), \(\chi_g^1(T) \leq 3\) and \(\chi_g^d(T) = 2\) for \(d \geq 2\).
\end{theorem}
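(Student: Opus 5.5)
Since this theorem is quoted from Chou--Wang--Zhu~\cite{ChouWangZhu2003} and He--Wu--Zhu~\cite{HeWuZhu2004}, I would reconstruct it using an activation-type strategy for Alice on a tree, adapted from the one behind \(\mathrm{col}_g(T)\le 4\)~\cite{FaigleKernEtAl1993}.

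\textbf{The invariant.} Alice maintains a structural invariant on the uncolored part. After each of her moves, every component \(K\) of \(T - (\text{colored vertices})\) should be adjacent to at most two colored vertices. Call these colored vertices the \emph{attachments} of \(K\).

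\textbf{Restoring it.} When Bob colors a vertex \(x\) inside a component \(K\), the components of \(K-x\) have at most three attachments, and only the component \(K'\) containing the path between the two old attachments can have three. Alice restores the invariant by coloring the median (branch vertex) \(m\) in \(K'\) of these three attachments. If no component has three attachments, she plays the same way in an arbitrary component, or colors any vertex.

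\textbf{Bounding constraints.} Under this invariant, every vertex Alice colors has at most three colored neighbors. Every vertex Bob colors has at most two colored neighbors, since at most two attachments of its component can be adjacent to it. I would also check that when a vertex is colored, the number of its colored neighbors whose own defect budget is nearly exhausted is controlled. A colored vertex \(u\) gains new colored neighbors only when a component attached to \(u\) is split. By tracking how many times this can happen before \(u\) is separated from further play, one aims to show that each colored vertex ends up with only a bounded number of neighbors colored after it, at most three.

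\textbf{Counting forbidden colors.} A color \(c\) is forbidden at an uncolored vertex \(v\) only if one of two things holds: \(v\) already has more than \(d-1\) neighbors colored \(c\), or some neighbor \(u\) of color \(c\) already has \(d\) neighbors colored \(c\). For \(d=1\) and three colors, the colored neighbors of \(v\) (at most three) can kill at most two colors if Alice always chooses the color of \(m\) to avoid saturating its attachments. So some color is always free. For \(d\ge2\) and two colors, a vertex with at most three colored neighbors sees at least one color with at most one occurrence. One then shows that neighbors of that color cannot be saturated, because saturation needs \(d\ge2\) same-colored neighbors. Alice's choices at medians keep this under control.

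\textbf{Lower bound.} For \(\chi_g^d(T)\ge 2\), with one color every vertex of degree greater than \(d\) violates the condition. So the bound is sharp whenever \(\Delta(T)>d\); for trees with \(\Delta(T)\le d\) the equality should be read as an upper bound.

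\textbf{Main obstacle.} The hard step is the saturation bookkeeping, namely showing that a neighbor \(u\) of color \(c\) is never already carrying \(d\) same-colored neighbors when a new vertex adjacent to \(u\) must be colored. My first attempt would be to strengthen the invariant. Alice would additionally guarantee that each attachment has at most one same-colored neighbor colored after it, by choosing the color of the median \(m\) to differ from the attachments whose budgets are tight. I would then verify by case analysis on the three attachments that such a color exists with three colors (\(d=1\)) and with two colors (\(d\ge2\)).
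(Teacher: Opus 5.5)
\textbf{Context.} The paper does not prove this statement. It quotes it from Chou--Wang--Zhu and He--Wu--Zhu, and only remarks later that the He--Wu--Zhu strategy cuts the forest into trunks ending at colored vertices and makes ``an appropriate choice of trunk vertex'' for Alice. Your invariant (every uncolored component has at most two colored attachments, restored by coloring the median) is that framework. Its structural consequences are correct: Bob's vertex has at most two colored neighbors, and the median has at most three. Your remark that \(\chi_g^d(T)=2\) fails when \(\Delta(T)\le d\), so the bound must be read as \(\le 2\), is also correct.

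\textbf{The gap for \(d\ge 2\).} The missing piece is the saturation analysis, which is the whole content of this case. Call a colored vertex \emph{saturated} if it has \(d\) neighbors of its own color. With two colors and at most three colored neighbors, an uncolored \(v\) is dead exactly when it has a saturated neighbor of each color. Your step ``neighbors of that color cannot be saturated, because saturation needs \(d\ge2\) same-colored neighbors'' confuses \(N(v)\) with \(N(u)\). The \(d\) same-colored neighbors of \(u\) can lie in other components attached to \(u\), and Bob can color them himself.

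Concretely, take \(d=2\) and a path \(a\)--\(v\)--\(b\), where \(a\) and \(b\) each carry two pendant leaves, plus a long path hung on \(a\) to absorb Alice's other moves. Bob colors \(a\) with \(1\) and \(b\) with \(2\), then both leaves of \(a\) with \(1\) and both leaves of \(b\) with \(2\). None of Bob's moves creates a component with three attachments. So nothing in your rules makes Alice color \(v\), and your color choices at medians never come into play. At the end, \(v\) has no legal color.

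The same example defeats your other claims. Bob alone gives \(a\) two same-colored neighbors after \(a\) was colored, so your strengthened invariant cannot be secured through Alice's median colors. The claim that every colored vertex ends with at most three later-colored neighbors is false outright: the first colored vertex has all its neighbors colored later.

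What is missing is a rule for Alice's unforced moves that defuses configurations \(a\)--\(v\)--\(b\) with differently colored ends before both ends saturate. You would also need a proof that Bob cannot create such threats faster than Alice removes them. That proof must handle two complications. First, cascades: coloring \(v\) with \(b\)'s color may saturate \(b\) and endanger another component \(b\)--\(v'\)--\(c\). Second, forced median moves can themselves create new such configurations. This is where the ``appropriate choice of trunk vertex'' must do real work, and the proposal does not supply it.

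\textbf{The case \(d=1\).} Your conclusion is right, but not for the reason you give. Alice cannot always color \(m\) without saturating an attachment: if \(a,b,x\) are unsaturated with three distinct colors, every choice saturates one of them. No color rule is needed. A color is blocked at \(v\) only if some neighbor of \(v\) carries it, so any uncolored vertex with at most two colored neighbors keeps a free color among three. The only vertex with three colored neighbors is the median \(m\) just after Bob's move. Bob's vertex \(x\) lies off the \(a\)--\(b\) path, so it had no colored neighbor when he colored it and is therefore unsaturated. Hence \(x\)'s color is blocked at \(m\) only if \(a\) or \(b\) has that color too, and \(m\) always has a legal color.
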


\subsection{Our results}

Our first result resolves a problem raised by Bosek--Grytczuk--Jak\'obczak~\cite{BosekGrytczukEtAl2019} on finding the game majority chromatic number of an arbitrary subdivision of a graph.
\begin{theorem}\label{T:Main_subdivisions}
    Let \(G\) be a finite simple graph, and let \(s \colon E(G) \to \{ 1, 2, 3, \dotsc\}\) be any function from the edges of \(G\) to the positive integers.
    Let \(G'\) be obtained from \(G\) by making \(s(e)\) many subdivisions of each edge \(e \in E(G)\).
    Then \(\mu_g(G') \le 3\).
\end{theorem}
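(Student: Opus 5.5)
The plan rests on the fact that in \(G'\) every vertex that does not come from \(V(G)\) has degree exactly \(2\). Vertices of degree at most \(2\) can never block the game, so Alice only has to protect the branch vertices, i.e.\ the vertices of \(V(G)\) of degree at least \(3\) in \(G'\). She does this with a response strategy in the spirit of the marking game.

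\textbf{Step 1 (degree-\(2\) vertices never get stuck).} Let \(v\) have degree \(2\) in \(G'\), with neighbors \(u_1,u_2\). Colouring \(v\) with \(c\) can violate the majority condition at \(v\) only if \(c(u_1)=c(u_2)=c\). It can violate the condition at \(u_i\) only if \(c=c(u_i)\). So the set of forbidden colors for \(v\) is contained in \(\{c(u_1),c(u_2)\}\), which has at most \(2\) elements. Hence with \(3\) colors \(v\) always has a legal color.

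The same check handles vertices of \(V(G)\) of degree \(1\) or \(0\) in \(G'\): at most one color is forbidden for them. Moreover, once an uncolored vertex has a legal color, it keeps one until it is colored. This holds for the low-degree vertices whatever the players do, and for branch vertices as long as the invariant in Step 2 is kept. So Alice wins as soon as no branch vertex is ever left uncolored with all \(3\) colors forbidden.

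\textbf{Step 2 (counting forbidden colors at a branch vertex).} Let \(w\in V(G)\) have degree \(d\ge 3\) in \(G'\). All neighbors of \(w\) are subdivision vertices of degree \(2\). A color \(c\) is forbidden for \(w\) only in one of two ways.
\begin{enumerate}
\item Some neighbor \(u\) has \(c(u)=c\) and the other neighbor of \(u\) is also colored \(c\).
\item More than \(d/2\) neighbors of \(w\) already carry \(c\).
\end{enumerate}
In either case \(w\) has a colored neighbor of color \(c\), so the number of forbidden colors at \(w\) is at most the number of distinct colors on \(N(w)\). Alice's invariant is: after each of her moves, every uncolored branch vertex has at most one forbidden color. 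If the invariant holds after Alice's move, then after Bob's next move every uncolored branch vertex has at most two forbidden colors. This is because Bob's move colors a single vertex \(x\), which changes the forbidden set of each branch neighbor of \(x\) by at most one color.

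\textbf{Step 3 (Alice's strategy).} On her turn, if some uncolored branch vertex \(w\) now has two forbidden colors, Alice colors it with the remaining color. Otherwise she colors an uncolored branch vertex having the most colored neighbors. If all branch vertices are colored, she plays anywhere, which is safe by Step 1.

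When choosing a color for any vertex, Alice prefers colors that do not create a new blocking pattern (type (a) above) at a neighboring uncolored branch vertex. With three colors such a choice is usually available. This uses the freedom she has, in the spirit of the ``most distinct neighborhood colors'' argument.

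\textbf{Main obstacle.} The difficulty is Bob creating two threats with one move. This happens only if he colors a vertex \(x\) adjacent to two uncolored branch vertices \(a\) and \(b\), i.e.\ the middle vertex of an edge with \(s(e)=1\), when both \(a\) and \(b\) already have one forbidden color.

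To handle it, I would first note that a color \(c(x)\) placed on such an \(x\) cannot create a type-(a) block, since the other neighbor of \(x\) is uncolored. So it can only raise majority counts. I would then strengthen the invariant to a potential counted on the auxiliary graph \(H\) of branch vertices joined through \(1\)-subdivided edges. Roughly, an uncolored branch vertex may have a forbidden color only if it has at most one uncolored ``dangerous'' \(H\)-neighbor. Alice would maintain this by preemptively coloring branch vertices in the order of an activation-type strategy on \(H\), mirroring the argument of Bosek--Grytczuk--Jak\'obczak for the \(1\)-subdivision.

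Edges with \(s(e)\ge 2\) decouple their endpoints entirely, because a single move touches at most one branch vertex. The proof should therefore reduce to gluing the \(1\)-subdivision argument on \(H\) with the trivial argument of Step 1 on everything else.
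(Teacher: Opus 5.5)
Your Steps 1 and 2 are correct bookkeeping: a degree-\(2\) vertex can lose at most two colors, every forbidden color at a branch vertex is witnessed by a colored neighbor, and a move with color \(c\) can add only \(c\) to any forbidden set. But the proposal stops where the proof has to begin.

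\begin{itemize}
\item You never show that your invariant can be maintained (every uncolored branch vertex has at most one forbidden color after Alice's move).
\item The case you call the main obstacle is left as a ``roughly'' stated potential on an auxiliary graph, with an unspecified activation strategy.
\item Step 3 relies on a good color being ``usually available''. That fails exactly when it matters. Suppose Alice must give a branch vertex \(w\) its unique remaining color \(c\), and \(w\) has a neighbor \(u\) colored \(c\) whose other neighbor is an uncolored branch vertex \(b\). Then her own move makes \(u\) majority colored and adds \(c\) to the forbidden set of \(b\).
\end{itemize}

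More seriously, your account of where double threats come from is wrong, so the planned reduction to the \(1\)-subdivision argument cannot work. A move changes the forbidden sets of branch vertices at distance \(2\), not only those of its neighbors.
\begin{itemize}
\item On an edge with \(s(e)=2\), take the path \(a\,x\,y\,b\). If \(y\) already has color \(c\) and Bob colors \(x\) with \(c\) (legal, since \(a\) is uncolored), then \(x\) and \(y\) both become majority colored. So \(c\) is forbidden at \(a\) and at \(b\) at once, and edges with \(s(e)\ge 2\) do not decouple their endpoints.
\item Suppose several new neighbors of an old vertex \(w\) already carry \(c\), each on a \(1\)-subdivided edge to an uncolored old vertex. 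Coloring \(w\) with \(c\) then forbids \(c\) at all of those endpoints in one move.
\end{itemize}
These are precisely the configurations ruled out in the paper's first lemma.

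The missing idea is to stop counting forbidden colors and control majority-colored neighbors instead. Two colors can never both be forbidden at a vertex by the ``more than half the neighbors'' condition. So a blocked old vertex needs two majority-colored neighbors of different colors.

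The paper's strategy prevents this as follows.
\begin{itemize}
\item Whenever Bob colors a new vertex next to an uncolored old vertex, Alice colors that old vertex at once. She uses a color different from the new vertex and least used on its neighborhood.
\item Alice colors a new vertex herself only when it is adjacent to a colored old vertex (or no uncolored old vertex remains). She always uses a color different from both of its neighbors.
\end{itemize}
Then neither configuration above can be completed. One shows that at any time at most one uncolored old vertex has a majority-colored neighbor, and Alice colors it on her next move. Hence no old vertex is ever blocked.
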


Next, we improve on the upper bound \(\mu_g(G) \leq 4\) coming from the game coloring number for more classes of trees:
\begin{theorem}\label{T:Main_fixed_depth}
    For nonempty rooted trees \(T\) in which all leaves have depth \(k\), where \(k \leq 4\), we have \(\mu_g(T) \le 3\).
\end{theorem}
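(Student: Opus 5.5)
The plan is to give Alice an explicit strategy with palette \(\{1,2,3\}\) and show that no vertex ever becomes uncolorable. In a tree, the only vertices at risk are internal vertices \(v\) with \(\deg(v)=d\). Such a \(v\) gets stuck only if every color \(c\) is forbidden, which happens when either (i) more than \(\lfloor d/2\rfloor\) colored neighbors of \(v\) already have color \(c\), or (ii) some colored neighbor \(u\) of \(v\) with color \(c\) is already saturated, meaning \(u\) has \(\lfloor \deg(u)/2\rfloor\) neighbors of color \(c\).

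Leaves need no attention. A leaf \(\ell\) with parent \(p\) only needs a color different from \(c(p)\), unless \(\lfloor 1/2\rfloor=0\) forces \(c(\ell)\neq c(p)\); in either case two other colors are always available. The danger is therefore concentrated on the internal levels \(0,\dots,k-1\), and for \(k\le4\) there are at most four such levels. We handle \(k\le 2\) directly: for depth \(1\) the tree is a star, so Theorem~\ref{T:FSTTCS}(\ref{T:FSTTCSb}) applies, and for depth \(2\) the root is the only nonleaf vertex with a nonleaf neighborhood.

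The core of the strategy is an \emph{activation} rule in the style of the tree strategy for the marking game of Faigle--Kern--Kierstead--Trotter, adapted to the level structure. Whenever Bob colors a vertex \(x\), Alice responds on the path from \(x\) toward the root \(r\). She colors the nearest uncolored internal ancestor of \(x\). If all such ancestors are colored, she colors an uncolored internal sibling or child of \(x\). If there is nothing there either, she colors an arbitrary vertex, preferring the highest internal level.

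The key invariant to establish is that when an internal vertex \(v\) becomes threatened, Alice has a spare move to color it. Here \(v\) is threatened when Bob has colored about half of its neighbors, or has saturated a neighbor of \(v\) in two colors. Because every leaf is at depth exactly \(k\), the vertices at level \(k-1\) have only leaf children plus one parent. Bob can therefore block such a vertex only via its children, and each such child move triggers Alice's response, which colors that parent first. The result is that each level-\((k-1)\) vertex is colored by Alice after at most one of its children is colored, so at that point at most one color is forbidden by the children and one more by the parent. The same counting is then propagated upward, level by level, to levels \(k-2,\dots,0\), bounding the number of ``bad'' colored neighbors at the moment Alice colors each vertex.

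I expect the main obstacle to be the interaction in case (ii) at levels \(1\) and \(2\) when \(k=4\). Bob may color a child \(w\) of \(v\) in such a way that the parent \(v\) becomes saturated in one color through \(w\)'s own children. He can then repeat this in a second color via another child, all while Alice is forced to answer elsewhere. To handle this, I would first try to strengthen the invariant to: ``every colored internal vertex was colored before more than one of its neighbors was colored.'' The idea is to verify this by a potential argument showing that each Bob move creates at most one new urgent vertex, which Alice's response resolves. If that fails for \(k=4\), the fallback is a case analysis on Bob's first move in each depth-\(2\) subtree.
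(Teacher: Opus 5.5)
There is a genuine gap: what you have written is a plan, not a proof. The invariant (``Alice always has a spare move to color a threatened vertex'') is only announced, and the propagation ``level by level'' is never carried out. The case you single out as the main obstacle is deferred to an argument you ``would try'' or to an unspecified case analysis. Alice's colors are never specified, although in this game the color matters as much as the vertex.

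You are also missing the observation that does most of the work in the paper (Lemma~\ref{L:not_maj_n-1}). A vertex at level $k-1$ whose parent is uncolored can never be majority colored, since its other neighbors are leaves and a leaf can never share its parent's color. Hence an uncolored vertex at level $k-2$ has no saturated children. So at most one color is forbidden there by the counting condition (two colors would require $2\lfloor d/2\rfloor+2>d$ colored neighbors), and at most one by a saturated parent. Combined with your correct argument at level $k-1$, and with the root colored first, this already settles $k\le 3$. What remains is exactly level $1$ when $k=4$, where level-$2$ children can become saturated through their level-$3$ children while the level-$1$ vertex is still uncolored.

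At that level your activation rule points the wrong way. You always color the nearest uncolored ancestor of Bob's vertex, so Bob can play only at levels $3$ and $4$ below a fixed level-$1$ vertex, and you never color it. Here is a concrete tree.
\begin{itemize}
\item The root $r$ has children $a,b,c$, where $a$ is the top of a path reaching depth $4$.
\item $b$ and $c$ each have children $u_1,u_2,u_3,u^*$.
\item Each $u_i$ has two children $w_{i,1},w_{i,2}$, and $u^*$ has one child $w^*$.
\item Every $w$ has exactly one leaf child.
\end{itemize}
The play goes as follows.
\begin{enumerate}
\item You color $r$ with $R$. Bob colors $a$ with $R$, which saturates $r$ since $\deg r=3$.
\item You color one of $b$, $c$, or a child of $a$. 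Bob then works below an uncolored one of $b,c$, say $c$.
\item Bob colors the leaf under $w^*$ with $R$. You must color $w^*$ with some $Z\ne R$; let $X$ be the third color.
\item For $i=1,2,3$: Bob colors the leaf under $w_{i,1}$ with $X$, and you color $w_{i,1}$ with one of $R,Z$. Bob colors $w_{i,2}$ with the other of $R,Z$, and you color $u_i$. Under the natural rule ``use a color least represented among colored neighbors'' (with any tie-breaking), $u_i$ must get $X$.
\item Bob colors $u^*$ with $Z$.
\end{enumerate}
Now $R$ is forbidden at $c$ by the saturated root. $X$ is forbidden because $c$ already has three $X$-neighbors while $\lfloor 5/2\rfloor=2$. $Z$ is forbidden by the saturated $u^*$. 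So $c$ can never be colored, and any rescue would need a color rule your proposal does not contain.

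The paper avoids this by giving level $1$ priority. After coloring the root, Alice answers a Bob move that creates a majority-colored vertex by coloring that vertex's uncolored parent. She answers a Bob move at level $2$ by coloring its parent. Otherwise she colors an uncolored level-$1$ vertex with a color different from the root's. Quiet Bob moves such as coloring $w_{i,2}$ above therefore hand her the tempo to color $c$. Being able to always avoid the root's color at level $1$ also excludes the configuration in which a saturated root supplies the third forbidden color.
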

\begin{theorem}\label{T:Main_2caterpillars}
    For any \(2\)-caterpillar \(C\), \(\mu_g(C)\le 3\).
\end{theorem}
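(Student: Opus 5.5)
The plan is to have Alice follow an explicit local response strategy and to prove it correct through an invariant on uncolored vertices, much as in the proof of Theorem~\ref{T:FSTTCS}(\ref{T:FSTTCSa}).

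\textbf{Setup and basic observations.} Write \(C\) as a spine path \(P = p_1 p_2 \dotsb p_m\), where every vertex lies within distance \(2\) of \(P\). A non-spine vertex adjacent to some \(p_i\) is a \emph{leg vertex}. Every other non-spine vertex is a leaf hanging from a leg vertex. Root \(C\) at \(p_1\), so every vertex except \(p_1\) has a parent.

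I will use three elementary facts about the game with palette \(\{1,2,3\}\).
\begin{enumerate}
\item A leaf \(v\) with neighbor \(u\) may receive any color different from \(c(u)\). Such a coloring never breaks the condition at \(u\). Hence a leaf always has at least two legal colors, and leaves can never be blocked.
\item The majority condition at an uncolored vertex \(v\) itself forbids at most one color, since two colors cannot each occupy more than \(\deg(v)/2\) of \(N(v)\).
\item Call a colored vertex \(w\) \emph{saturated} if exactly \(\lfloor \deg(w)/2 \rfloor\) of its neighbors already share its color. The only other way color \(c\) becomes forbidden at \(v\) is that some colored neighbor \(w\) of \(v\) with \(c(w)=c\) is saturated.
\end{enumerate}
It follows that an uncolored vertex \(v\) is blocked only if one color is forbidden by \(v\)'s own majority and the other two colors are forbidden by saturated neighbors, or all three colors are forbidden by saturated neighbors. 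Either way, \(v\) needs at least two saturated neighbors of distinct colors.

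\textbf{Invariant.} I would maintain the following after each of Alice's moves:
\begin{itemize}
\item[(I)] every uncolored non-leaf vertex has at most one saturated colored neighbor, and every uncolored spine or leg vertex whose parent is colored has no saturated neighbor other than possibly its parent.
\end{itemize}
Under (I), every uncolored vertex keeps a legal color.

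\textbf{Alice's response.} When Bob colors a vertex \(w\), the only vertices that can become newly saturated are \(w\) and its colored neighbors. So only uncolored vertices at distance at most \(2\) from \(w\) can lose a color. Since the tree has radius \(2\) around the spine, these endangered vertices are few and structured: the parent of \(w\), the grandparent of \(w\), siblings of \(w\), and spine neighbors of these.

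Alice responds in priority order.
\begin{enumerate}
\item If the parent \(u\) of \(w\) is an uncolored spine or leg vertex, Alice colors \(u\). She picks a color that is not forbidden and, when possible, does not create a new saturated vertex next to an uncolored non-leaf vertex.
\item Otherwise she colors the nearest uncolored spine vertex adjacent to the affected region.
\item If nothing is endangered, she makes an arbitrary safe move on the spine, moving left to right.
\end{enumerate}
This resembles the activation strategy for trees, truncated because the depth is bounded by \(2\). That bounded depth is why only a constant number of vertices need protection per round.

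\textbf{Main obstacle.} The hard case is a spine vertex \(p_i\) of large degree. Bob may color many leg vertices around \(p_i\) so that two spine neighbors \(p_{i-1}\) and \(p_{i+1}\), or a leg vertex, become saturated in distinct colors before Alice has colored \(p_i\). It is also possible that coloring \(p_i\) itself creates a saturation that threatens \(p_{i\pm1}\).

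To handle this, I would first try a counting argument. For a vertex of degree \(d\) to become saturated, \(\lfloor d/2 \rfloor\) of its neighbors must share its color. Alice's rule (1) then guarantees that she colors the threatened spine vertex before a second saturation can appear next to it. Here the choice among \emph{three} colors must be used to avoid the colors of any saturated neighbors.

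The verification that (I) is preserved in every configuration, with cases according to whether \(w\) is a leaf, a leg vertex, or a spine vertex, is where I expect most of the work to lie. If (I) proves too strong, I would weaken it to ``at most one saturated neighbor whose color differs from the color forbidden by \(v\)'s own majority''.
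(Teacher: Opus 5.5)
\textbf{There is a genuine gap: the key step is missing, and the invariant you chose cannot carry the proof.} Your preliminary facts are correct: a leaf always has a legal color, an uncolored vertex's own majority forbids at most one color, and a blocked vertex needs two majority-colored neighbors of distinct colors. But the proof stops where the real work begins. You never show that (I) is restored after a Bob move. The response rules (``nearest uncolored spine vertex adjacent to the affected region'', ``when possible'') are too loose to check. More seriously, (I) counts only saturated neighbors and ignores the color an uncolored vertex loses to its own majority. A vertex with one saturated neighbor and one majority-forbidden color therefore satisfies (I), yet Bob blocks it with a single pendant-leaf move.

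\textbf{Counterexample to your rule (1).} Take spine vertices \(P=p_j\) and \(Q=p_{j+1}\) far from Alice's first move, with \(p_{j-1}\) uncolored.
\begin{itemize}
\item \(P\) has degree \(3\), with one leg \(P\,y_P\,\ell_P\) of length \(2\).
\item \(Q\) has degree \(7\): two spine neighbors, four legs \(Q\,y_s\,\ell_s\) (\(1\le s\le 4\)) of length \(2\), and one pendant leaf \(\ell_Q\).
\end{itemize}
The play goes as follows.
\begin{itemize}
\item Bob colors \(\ell_1,\dots,\ell_4\) with \(1\). Each time, rule (1) makes Alice color \(y_s\), and nothing in your rules stops her from choosing \(2\) every time, since no vertex becomes saturated while \(Q\) is uncolored. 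Now \(2\) is forbidden at \(Q\) by its own majority (\(4>7/2\)).
\item Bob colors \(\ell_P\) with \(2\), and Alice colors \(y_P\) with some \(c\in\{1,3\}\).
\item Bob colors \(P\) with \(c\). Now \(P\) is saturated although both its spine neighbors are uncolored, and rule (1) sends Alice to \(p_{j-1}\).
\item Invariant (I) still holds, since \(Q\)'s only saturated neighbor is its parent \(P\). Yet Bob colors \(\ell_Q\) with the remaining color, and \(Q\) is blocked.
\end{itemize}
So your fallback of \emph{weakening} (I) goes the wrong way. Your ``main obstacle'' is also misplaced: under rule (1), Bob cannot color a leg vertex of an uncolored spine vertex without Alice answering there. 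The damage comes from leg vertices that \emph{Alice} colors, which both build a majority at \(Q\) and let Bob saturate \(P\) instantly. Fixing only the color choice does not rescue the argument, because (I) does not encode the constraint that matters.

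\textbf{What is missing is the global mechanism the paper uses in place of a local invariant.}
\begin{itemize}
\item \emph{No leg moves by Alice.} Alice never colors a leg vertex while some spine vertex is uncolored. Whenever Bob colors a leg vertex (a pendant leaf or the degree-\(2\) vertex of a length-\(2\) leg), she immediately colors the spine vertex it hangs from.
\item \emph{Configuration 2 is excluded.} A degree-\(2\) leg vertex cannot be majority colored while its spine neighbor is uncolored. So an uncolored spine vertex has at most three colored neighbors, whereas configuration~2 needs at least four.
\item \emph{Configuration 1 then needs both spine neighbors majority colored.} This is excluded by a directional rule. After Bob colors a spine vertex, Alice colors the nearest uncolored spine vertex to its right (to its left if none remains). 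She always uses a color different from the right spine neighbor.
\item \emph{The directional lemma.} Lemma~\ref{L:diff_col_from_right} and the argument following it show that, under this rule, the left spine neighbor of an uncolored spine vertex is never majority colored.
\end{itemize}
Your proposal needs some rule of this kind, controlling which spine vertices may become majority colored next to uncolored ones.
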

Since there are caterpillars \(C\) with \(\mathrm{col}_g(C) = 4\) (cf.~\cite{Bodlaender1991}), this is a genuine improvement.
These results also support the idea that \(\mu_g(T) \leq 3\) for every tree \(T\), which was raised as an open question by Bosek--Grytczuk--Jak\'obczak~\cite{BosekGrytczukEtAl2019}.

We also undertake a detailed investigation of why the latter problem seems to be difficult to resolve in Section~\ref{S:Natural_strategy}.
We provide several natural strategies for Alice using a palette of \(3\) colors, and construct specific trees on which Bob defeats these strategies.
We also show why the strategies conceived for the marking game by Faigle--Kern--Kierstead--Trotter~\cite{FaigleKernEtAl1993}, and for the relaxed coloring game by Chou--Wang--Zhu~\cite{ChouWangZhu2003} and He--Wu--Zhu~\cite{HeWuZhu2004} do not directly help on the majority coloring game.
These negative results motivate some open problems, whose resolution can further progress towards a complete understanding of the game majority chromatic number of trees.
However, we observe that the strategies proving Theorem~\ref{T:defective-game-trees} extend to the majority game coloring on certain classes of infinite acyclic graphs, therefore we can say that if \(G\) is an infinite acyclic graph with \(2m \le \deg(v) \le 2m+1 \hspace{0.2cm} \forall v \in V(G) \) where \(m\ge 2\), then, \(\mu_g(G) \le 2\).

In the later part of the paper, we introduce the \emph{strong majority coloring game}, in which Alice and Bob are required to maintain the strong majority condition at each partial stage of the game.
The \emph{strong majority game chromatic number}, \(\mathrm{Maj}_g(G)\), of the graph \(G\) is the least number of colors with which Alice has a winning strategy in the strong majority coloring game on \(G\).
We study the strong majority game chromatic number of cycles and prove the following result.
\begin{theorem}\label{T:Main_cycles}
    For cycles \(C_n\) of length \(n \geq 3\), \(\Maj(C_n) = 3\) if \(n \neq 4\) and \(\mathrm{Maj}_g(C_n)=2\) otherwise.
\end{theorem}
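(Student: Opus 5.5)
The plan is to reduce the strong majority coloring game on \(C_n\) to an ordinary proper coloring game on an auxiliary graph, and then read off the answer.

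\textbf{Reduction.} Every vertex \(v\) of \(C_n\) has degree \(2\), so \(\deg(v)/2 = 1\). Hence the strong majority condition at \(v\) says exactly that the two neighbors of \(v\) do not share a color; the color of \(v\) itself is irrelevant. A (partial) coloring of \(C_n\) is therefore strongly majority legal if and only if it is a (partial) proper coloring of the distance-\(2\) graph \(H_n\). This graph has vertex set \(V(C_n)\), and two vertices are adjacent in it exactly when they are at distance \(2\) in \(C_n\). Each constraint involves only a pair of vertices, so a partial coloring is legal in the game precisely when it is a proper partial coloring of \(H_n\). The strong majority game on \(C_n\) is thus the ordinary coloring game on \(H_n\).

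The structure of \(H_n\) depends on the parity of \(n\):
\begin{itemize}
\item for \(n\) odd, \(H_n\) is a single cycle of length \(n\), obtained by stepping by \(2\);
\item for \(n\) even, \(H_n\) is a disjoint union of two cycles of length \(n/2\), where a "cycle" of length \(2\) means a single edge.
\end{itemize}
In particular, \(H_3\) is a triangle, \(H_4\) is a perfect matching on \(4\) vertices, and \(H_6\) is two disjoint triangles.

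\textbf{Upper bounds.} Every vertex of \(H_n\) has degree at most \(2\). So with \(3\) colors, each uncolored vertex always has a legal color, no matter how play has gone. Either player's move is always available, every vertex eventually gets colored, and \(\mathrm{Maj}_g(C_n) \le 3\) for all \(n \ge 3\). For \(n = 4\), \(H_4\) is a matching, so a vertex of \(H_4\) has at most one neighbor and at most one forbidden color. With \(2\) colors the game therefore always completes, giving \(\mathrm{Maj}_g(C_4) \le 2\). A single color clearly fails, so \(\mathrm{Maj}_g(C_4)=2\).

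\textbf{Lower bounds for \(n \neq 4\).} If \(n \not\equiv 0 \pmod 4\), the static bound \(\mathrm{Maj}_g(C_n) \ge \Maj(C_n) = 3\) from Theorem~\ref{T:strong_maj_results}(\ref{T:strong_maj_results_cycles}) already suffices. The remaining case is \(n \equiv 0 \pmod 4\) with \(n \ge 8\), where \(H_n\) consists of two cycles of even length \(m = n/2 \ge 4\). Here we show Bob wins with \(2\) colors.
\begin{enumerate}
\item Alice's first move colors some vertex \(x\) with color \(a\).
\item Bob answers in the same component of \(H_n\). He colors a vertex \(y\) at distance exactly \(2\) from \(x\) in that cycle with the other color \(b\). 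This move is legal because \(m \ge 4\), so \(y\) is not adjacent to \(x\) in \(H_n\), and nothing else is colored yet.
\item The common neighbor \(z\) of \(x\) and \(y\) in \(H_n\) is then uncolored with both colors forbidden. So \(z\) can never be colored, and Bob wins.
\end{enumerate}
The same strategy also works for odd \(n \ge 5\), so one could avoid citing the static result there. For \(n = 3\) or \(n = 6\), the triangles in \(H_n\) force \(3\) colors anyway.

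The only genuinely delicate point is the reduction step. One has to check that the game's requirement, that the condition hold at every partial coloring, translates exactly into properness of the partial coloring on \(H_n\). Once that is confirmed, the rest is routine.
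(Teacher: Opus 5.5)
Your proof is correct and is essentially the paper's argument recast in the language of \(H_n\), which is just the paper's preliminary claim that vertices two apart on the cycle must get different colors. The paper likewise gets \(\Maj_g(C_n)\le 3\) because at most two colors can be forbidden at a vertex, and handles \(C_4\) by observing that only the pairs \(\{1,3\}\) and \(\{2,4\}\) are constrained. For the lower bound it uses exactly your move: Alice colors vertex \(1\), Bob colors vertex \(5\) with the other color, and vertex \(3\) is left with no legal color. The difference is that the paper applies this single move to every \(n\ge 5\), instead of splitting by \(n \bmod 4\) and citing the static value of \(\Maj(C_n)\).

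One small correction: adjacency in \(H_n\) should be \(w\equiv u\pm 2 \pmod n\), i.e.\ sharing a neighbor in \(C_n\), not ``distance \(2\) in \(C_n\)''. The latter makes \(H_3\) edgeless rather than the triangle you use. This does not affect your argument, since your \(n=3\) lower bound comes from the static result.
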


Lastly we investigate the complexity of the strong majority game coloring and show:
\begin{theorem}\label{T:Main_pspace_complete}
The decision version of the \textsc{Strong Majority Game Chromatic Number} is \PSPACE-complete (when $k\geq 3$).
\end{theorem}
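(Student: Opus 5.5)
Membership in \PSPACE is routine: the game on \(G\) with \(k\) colors lasts exactly \(|V(G)|\) moves, each chosen from at most \(|V(G)|\cdot k\) options, and checking that a partial coloring satisfies the strong majority condition takes polynomial time. A depth-first evaluation of the game tree therefore uses polynomial space. All the work is in the hardness direction.

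For hardness I would reduce from the \PSPACE-complete problem used in the analogous result of Theorem~\ref{T:FSTTCS_complexity}(\ref{T:FSTTCS_complexity_3}), namely the (extended/pre-colored) majority game chromatic number problem. Since the present problem has no pre-coloring, a more natural source is the classical game chromatic number problem, which Costa--Pessoa--Sampaio--Soares~\cite{CostaPessoaSampaioSoares2020} proved \PSPACE-complete. The plan is to translate the proper-coloring constraint into a strong majority constraint by local gadgets. Given an instance \((H,k)\), I would replace each edge \(uv\) of \(H\) by a gadget in which a new vertex \(w_{uv}\) has neighborhood \(\{u,v\}\) together with a few auxiliary vertices. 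Since \(\deg(w_{uv})/2\) is small, the strong condition at \(w_{uv}\) forbids \(u\) and \(v\) from sharing a color, but only if the auxiliary neighbors are forced to be colored distinctly and differently from everything else. The simplest version is \(\deg(w_{uv})=2\) with \(N(w_{uv})=\{u,v\}\): the strong condition then says exactly that \(u\) and \(v\) receive different colors, so a 1-subdivision already encodes properness at the middle vertices. I must also make sure the original vertices themselves never block legal moves. A vertex \(u\) of degree \(d\) in \(H\) has \(d\) subdivision neighbors, and any \(\lceil (d+1)/2\rceil\) of these sharing a color would violate its constraint. To neutralize this, I would attach to each original vertex many pendant-like structures (for instance, a large number of disjoint triangles through \(u\), or copies of a gadget with \(\delta\ge 2\)). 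These raise \(\deg(u)\) so that the constraint at \(u\) can never bind, while keeping the constraint at these extra vertices harmless with \(k\ge 3\) colors. Here I would use \(\Maj(K_n)=3\) from Theorem~\ref{T:strong_maj_results} as a sanity check for triangle pieces.

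The main obstacle is the game dynamics rather than the static encoding. The gadget vertices (subdivision vertices and padding vertices) add extra moves, which shift turn parity and give Bob spare moves he can use to stall or to pre-color gadget vertices harmfully. For example, Bob could color \(w_{uv}\) early and thereby constrain \(u\) and \(v\) more than in \(H\). I would handle parity by making the number of padding vertices even, or adjusting it by one, so that the moves on \(H\)'s vertices alternate as in the original game. I would also argue by a strategy-stealing or pairing argument that moves inside gadgets can always be answered inside the same gadget. For this, every gadget vertex must always have a legal color regardless of play, which is guaranteed when it has at most two relevant constrained neighbors and \(k\ge 3\). The correctness proof then has two simulation directions: Alice's winning strategy on \(H\) lifts to \(G\) by answering gadget moves with paired gadget moves, and Bob's winning strategy on \(H\) lifts by forcing a vertex of \(H\) to have all \(k\) colors blocked through its subdivision neighbors. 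I expect the second direction, ensuring a blocked vertex in \(H\) really is uncolorable in \(G\) and that Alice cannot escape via gadget moves, to be the delicate part. If the pairing fails, I would fall back on reducing from a pre-colored variant and simulating pre-coloring by rigid gadgets whose colors are forced.
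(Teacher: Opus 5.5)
Your overall plan matches the paper's: membership via the polynomial-depth game tree, a reduction from the Game Coloring Problem of Costa--Pessoa--Sampaio--Soares, degree-$2$ middle vertices $w$ with $N(w)=\{u',v'\}$ so that the strong condition at $w$ is exactly properness of $uv$, pairing of dummy moves to preserve tempo, and, in Bob's direction, the fact that a blocked $v$ in $G$ gives a blocked $v'$. The gap is in making the dummy vertices harmless, and your fix fails.

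With one subdivision vertex per edge, an original vertex $u'$ of odd degree $2t+1$ can forbid \emph{two} colors at an uncolored neighbor $w$: it suffices that $t$ of its other neighbors have one color and $t$ have another. The other endpoint of $w$ can forbid a third color, so with $k=3$ the vertex $w$ can become blocked. If $\deg_G(u)=1$, then $\deg(u')=1$ and $w$ is uncolorable from the start; already $G=K_2$ is a yes-instance mapped to a no-instance. So your criterion ``at most two relevant constrained neighbors and $k\ge 3$'' is the wrong one. What is needed is that each neighbor of $w$ forbids at most one color, i.e.\ that original vertices have even degree.

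Your padding makes things worse. In a triangle $u'ab$ with $\deg(a)=\deg(b)=2$ we have $N(b)=\{u',a\}$, so coloring $a$ with $j$ forbids $j$ at $u'$. Coloring $a=1$, $b=2$, and a vertex of a second triangle with $3$ leaves $u'$ with no legal color when $k=3$; with more triangles this works for any $k$. These moves have no counterpart in $G$, and nothing in your simulation lets Alice answer them. More generally, any padding neighbor $x$ of $u'$ forbids a color at $u'$ once $\lfloor\deg(x)/2\rfloor$ vertices of $N(x)$ share it. Raising $\deg(u')$ also does not make the constraint at $u'$ vacuous in a game, since padding neighbors may be colored monochromatically.

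The missing idea is the paper's doubling: replace each edge $uv$ by two parallel paths $u'w^1_{uv}v'$ and $u'w^2_{uv}v'$. This gives you three things at once.
\begin{itemize}
\item Every original vertex has even degree $2\deg_G(v)$, so it forbids at most one color at any uncolored intermediate neighbor. Hence intermediate vertices always have a legal color when $k\ge 3$.
\item The only neighbors of original vertices are degree-$2$ vertices whose other neighbor is also original. So coloring $x'$ with $j$ is legal exactly when no copy of a $G$-neighbor of $x$ has color $j$, and intermediate moves never affect legality at original vertices.
\item The pair $w^1_{uv}\leftrightarrow w^2_{uv}$ is a built-in pairing for the tempo argument, so no parity adjustment is needed.
\end{itemize}
With this construction both simulation directions, including the one you expected to be delicate, are routine.
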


\begin{theorem}\label{T:Main_np_complete}
    The \textsc{Strong Majority $2$-Coloring} problem is \NP-complete on Eulerian graphs.
\end{theorem}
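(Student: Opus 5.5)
The first step is to observe that on an Eulerian graph a strong majority \(2\)-coloring is the same thing as a \emph{neighborhood balanced coloring} in the sense of Freyberg--Marr. If every degree is even and only two colors are available, the condition ``each monochromatic subset of \(N(v)\) has size at most \(\deg(v)/2\)'' forces exactly \(\deg(v)/2\) neighbors of each color. Membership in \NP\ is immediate, since a coloring is a certificate checkable in polynomial time. The whole task is therefore to show that deciding whether an Eulerian graph admits a \(2\)-coloring with every neighborhood exactly split is \NP-hard.

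For hardness I would reduce from \textsc{Positive 1-in-3 SAT}. Equivalently, by swapping colors, from the problem of choosing a set of variables so that every clause \(\{x,y,z\}\) has exactly two chosen variables. Colors are only meaningful up to a global swap, so the first gadget fixes a reference. A vertex of degree \(2\) with neighbors \(a,b\) forces \(c(a)\neq c(b)\). Chaining such ``inverter'' vertices lets me build a global anchor vertex \(f\) and arbitrarily many copies of it, together with copies of its complement, all of whose colors are determined up to the global swap.

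\begin{itemize}
\item For each Boolean variable \(x\) I create a variable vertex.
\item For each clause \(C=\{x,y,z\}\) I create a clause vertex \(c_C\) adjacent to \(x\), \(y\), \(z\) and to one copy of the anchor \(f\). Balance at \(c_C\) then says exactly two of \(x,y,z,f\) share the color of \(f\). Equivalently, exactly one of \(x,y,z\) receives the color opposite to \(f\), which encodes 1-in-3.
\item To keep the graph Eulerian and every other neighborhood balanced, I pad. Whenever a vertex has odd degree, or an unbalanced set of ``forced'' neighbors, I attach a pair \(p,\bar p\) joined through an inverter. Such a pair contributes one neighbor of each color and so never affects balance.
\end{itemize}

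The main obstacle is that the variable vertices and auxiliary vertices must themselves have balanced neighborhoods. A variable vertex \(x\) is adjacent to clause vertices whose colors are a priori free. I would handle this by giving each clause vertex \(c_C\) a twin \(\bar c_C\), forced opposite via an inverter, and making \(\bar c_C\) adjacent to the same variable vertices, so that \(N(x)\) consists of complementary pairs. Then \(\bar c_C\)'s own neighborhood must be balanced. Since \(\bar c_C\) has the same variable neighbors as \(c_C\), I give it a complementary anchor copy so that its constraint is implied by \(c_C\)'s. Finally, every vertex's degree is made even by adding complementary pairs.

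After this, I would check both directions. A 1-in-3 assignment extends to a balanced coloring by coloring the forced gadgets in the unique way. Conversely, any balanced coloring, normalized so that \(f\) has a fixed color, reads off a valid assignment from the colors of the variable vertices. The verification of the inverter and anchor gadgets' own balance conditions is the fiddly part I expect to dominate the write-up.
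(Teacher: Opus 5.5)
\textbf{Verdict: genuine gap.} Several parts of your plan are sound: NP membership, the equivalence with neighborhood-balanced colorings on even-degree graphs, degree-$2$ vertices as inverters, and twin clause vertices so that variable vertices see complementary pairs. The clause gadget as you specify it fails, though, and it fails in exactly the part you deferred.

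\emph{Parity and the free inverter.} The inverter $w$ that forces $\bar c_C$ opposite to $c_C$ is itself a neighbor of $c_C$. So $N(c_C)\supseteq\{x,y,z,f,w\}$ and $c_C$ has odd degree. Complementary pairs add $2$ to a degree, so they can never fix parity. A vertex of odd degree admits no strong majority $2$-coloring at all, so as written every formula maps to a NO instance, and the graph is not Eulerian. Beyond parity, nothing pins the color of $w$: it appears only in the counts at $c_C$ and $\bar c_C$, so it enters as a free $0/1$ term. Whatever else you attach, balance at $c_C$ can at best say $k+[w]=j$ for a constant $j$, where $k$ is the number of $x,y,z$ sharing the color of $f$. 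That is $k\in\{j-1,j\}$, which never forces $k=1$ exactly.

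\emph{The complementary anchor on the twin.} Giving $\bar c_C$ the complementary anchor is backwards. Balance at a vertex depends only on its neighbors' colors, not on its own color. If $c_C$ and $\bar c_C$ share $x,y,z,w$ and differ only in $f$ versus $\bar f$, then their counts of $f$-colored neighbors differ by exactly one while their degrees agree. The two balance conditions are then contradictory.

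\emph{Minor slip.} A balanced $N(c_C)=\{x,y,z,f\}$ means exactly one of $x,y,z$ has the color of $f$, not the opposite color. This is harmless, but worth correcting.

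\emph{Repair, and the paper's route.} You can repair the construction. Give $\bar c_C$ an anchor copy of the \emph{same} color, tied to $f$ by inverter chains. Do not force the twins apart at all: simply color them oppositely in the forward direction, since the converse only reads the condition at $c_C$. Then balance the anchor chains by hand.

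The window observation, however, points to the cleaner route, which is the paper's. It reduces from NAE-$3$-SAT, which is invariant under complementing every variable. That matches the global color swap, so no anchor is needed. Twins $C^1_j,C^2_j$ share the three literal neighbors and a single common degree-$2$ vertex $i_j$. This $i_j$ is deliberately the free fourth neighbor, so degree-$4$ balance at $C^1_j$ says exactly that the literals split $2$--$1$; this is your window with $j=2$. Coloring the twins oppositely balances both $i_j$ and the literal vertices. Each variable contributes a $4$-cycle $x_i p_i \bar x_i q_i$ forcing $x_i\neq\bar x_i$. Every degree is even by construction, so no padding or anchor chains are required.
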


\section{Preliminaries}
\label{S:Preliminaries}

Before we sketch the proofs of the main results, we describe some key aspects of the majority coloring game which will dictate the strategies and counterstrategies of Alice and Bob throughout this paper.

Call a vertex \(v\) to be \emph{majority colored} if it has \(\lfloor \deg(v)/2 \rfloor\) neighbors that have the same color as itself.
If \(v\) is majority colored with color \(i\), we shall denote this as \majcol{i}.
Firstly, since both Alice and Bob are required to maintain the majority condition at each stage of the game, a winning condition for Bob is the existence of an uncolored vertex for which no legal color is available (for either player).
Assuming the game is played on a palette of three colors, such a ``critical'' vertex \(v_c\) can arise under the following conditions:
\begin{enumerate}
	\item \(v_c\) has three majority colored neighbors, each of a different color (cf.\ Figure~\ref{F:config1});
	\item \(v_c\) has two majority colored neighbors of different colors, and it has \(\lfloor \deg(v_c)/2 \rfloor+1\) neighbors of the third color (cf.\ Figure~\ref{F:config2}).
\end{enumerate}
A winning strategy for Alice with \(3\) colors must prevent the occurence of either configuration at any point in the game.
In the case of Theorem~\ref{T:FSTTCS}(\ref{T:FSTTCSa}), the constraint on the maximum degree automatically prevents the formation of the second configuration.
Hence, a winning strategy for Alice on such trees only needs to prevent the formation of the first configuration.
This is achieved in the following way: the tree is rooted at the first vertex that Alice colors, so the critical configurations to be avoided are those in Figure~\ref{F:crit_config_trees}. Then, broadly speaking, in subsequent moves she ensures that there does not appear any majority colored vertex with an uncolored parent.
It is shown in~\cite{ChawdaNanotiSankarnarayanan2026} that this can be done and is indeed a winning strategy for Alice.

This philosophy carries over to the results presented in this work.
We find that certain structural constraints---edge subdivisions, low depth, small paths attached to a central spine (i.e., \(2\)-caterpillars)---permit Alice to focus on tackling only one of the two critical configurations.
Furthermore, we show how natural extensions of such strategies to general trees can fail.
This suggests that if indeed \(\mu_g(T) \leq 3\) for all trees \(T\), then this must be due to deeper reasons.

 \begin{figure}[h!]
    \begin{subfigure}{0.45\textwidth}
        \begin{tikzpicture}
            [level distance=8mm,
            font=\scriptsize,
            every node/.style={fill=gray!25,circle, inner sep=2pt},
            level 1/.style={sibling distance=12mm},
            line width=0.8pt,
            smooth]

            \node {$v_c$}
            child{node{\majcol{1}}}
            child{node{\majcol{2}}}
            child{node{\majcol{3}}};
        \end{tikzpicture}
        \caption{\configuration{1}}
        \label{F:config1}
    \end{subfigure}
    \hfill 
    \begin{subfigure}{0.45\textwidth}
    \begin{tikzpicture}
        [every node/.style={fill=gray!25,circle, inner sep=2pt},
        level distance=8mm,
        font=\scriptsize,
        sibling distance=12mm,
        line width=0.8pt,
        smooth]

        \node (root) {$v_c$}
        child { node (c1) {\majcol{1}} }
        child { node (c2) {\majcol{2}} }
        child { node (c3) {3}}
        child { node (c4) [plain]{$\cdots$} }
        child { node (c5) {3} };

        \draw [decorate,decoration={brace,mirror,amplitude=6pt}]
        (c3.south west) -- (c5.south east)
        node [plain] [midway,below=6pt] {\(m+1\) $\text{neighbors}(v_c)_i$};
        \end{tikzpicture}
        \caption{\configuration{2}}
        \label{F:config2}
    \end{subfigure}
    \caption{Critical configurations for the majority coloring game.}
    \label{crit_config}
    \end{figure}
\begin{figure}[h!]
    \begin{subfigure}[b]{0.28\textwidth}
        \begin{tikzpicture}
            [level distance=8mm,
            font=\scriptsize,
            every node/.style={fill=gray!25,circle, inner sep=2pt},
            level 2/.style={sibling distance=20mm},
            level 3/.style={sibling distance=10mm},
            line width=0.8pt,
            smooth]

            \node {\majcol{1}}
            child{node [xshift=-10mm] {$v_c$}
            child{node {\majcol{2}}}
            child{node {\majcol{3}}}
            };
        \end{tikzpicture}
        \caption{\configuration{1a}}
        \label{config1a}
    \end{subfigure}
    \begin{subfigure}[b]{0.28\textwidth}
        \begin{tikzpicture}
            [level distance=7mm,
            font=\scriptsize,
            every node/.style={fill=gray!25,circle, inner sep=2pt},
            level 1/.style={sibling distance=10mm},
            line width=0.8pt,
            smooth]

            \node {$v_c$}
            child{node{\majcol{1}}}
            child{node{\majcol{2}}}
            child{node{\majcol{3}}};
        \end{tikzpicture}
        \caption{\configuration{1b}}
        \label{config1b}
    \end{subfigure}
    \begin{subfigure}[b]{0.4\textwidth}
    \begin{tikzpicture}
        [every node/.style={fill=gray!25,circle, inner sep=2pt},
        level distance=7mm,
        font=\scriptsize,
        sibling distance=12mm,
        line width=0.8pt,
        scale=0.9,
        smooth]

        \node (root) {$v_c$}
        child { node (c1) {\majcol{1}} }
        child { node (c2) {\majcol{2}} }
        child { node (c3) {3}}
        child { node (c4) [plain]{$\cdots$} }
        child { node (c5) {3} };

        \draw [decorate,decoration={brace,mirror,amplitude=6pt}]
        (c3.south west) -- (c5.south east)
        node [plain] [midway,below=6pt] {\(m+1\) $\text{child}(v_c)_i$};
        \end{tikzpicture}
        \caption{\configuration{2a}}
        \label{config2a}
    \end{subfigure}
    \begin{subfigure}[b]{0.34\textwidth}
    \begin{tikzpicture}
        [every node/.style={fill=gray!25,circle, inner sep=2pt},
        level distance=7mm,
        font=\scriptsize,
        sibling distance=12mm,
        line width=0.8pt,
        scale=0.9,
        smooth]

        \node {\majcol{1}}
        child { node [xshift=-12mm] {$v_c$}
        child { node (c1) {\majcol{2}} }
        child { node (c2) {3}}
        child { node (c3) [plain]{$\cdots$} }
        child { node (c4) {3} }
        };

        \draw [decorate,decoration={brace,mirror,amplitude=6pt}]
        (c2.south west) -- (c4.south east)
        node [plain] [midway,below=6pt] {\(m+1\) $\text{child}(v_c)_i$};

        \end{tikzpicture}
        \caption{\configuration{2b}}
        \label{config2b}
    \end{subfigure}
    \begin{subfigure}[b]{0.35\textwidth}
    \begin{tikzpicture}
        [every node/.style={fill=gray!25,circle, inner sep=2pt},
        level distance=7mm,
        font=\scriptsize,
        sibling distance=12mm,
        line width=0.8pt,
        scale=0.9,
        smooth]

        \node {3}
        child { node [xshift=-12mm] {$v_c$}
        child { node (c1) {\majcol{1}} }
        child { node (c2) {\majcol{2}} }
        child { node (c3) {3}}
        child { node (c4) [plain]{$\cdots$} }
        child { node (c5) {3} }
        };

        \draw [decorate,decoration={brace,mirror,amplitude=6pt}]
        (c3.south west) -- (c5.south east)
        node [plain] [midway,below=6pt] {\(m\) $\text{child}(v_c)_i$};

        \end{tikzpicture}
        \caption{\configuration{2c}}
        \label{config2c}
    \end{subfigure}
    \caption{Critical configurations on a rooted tree to be avoided by Alice.}
    \label{F:crit_config_trees}
\end{figure}

\section{Subdivided graphs are game majority \(3\)-colorable}
\label{S:Subdivisions}

Let \(G\) be a finite simple graph.
Let \(s \colon E(G) \to \{1,2,3,\dotsc \}\) be a function which assigns a positive integer to each edge of \(G\).
Let \(G'\) be obtained from \(G\) by making \(s(e)\)-many subdivisions of the edge \(e\), for each \(e \in E(G)\).
We claim that \(\mu_g(G') \le 3\).

The key ideas are as follows.
Subdividing an edge creates vertices of degree \(2\) which can never be critical.
Moreover, since every edge is subdivided at least once, the vertices of degree more than \(2\)---those that are ``potentially critical''---form an independent set, i.e., they are far away from each other.
Alice can exploit this by ensuring that she colors old vertices with high priority, and she has enough room to color a neighbor of on old vertex with a safe color if Bob tries to color old vertices before she does.

More formally, Alice takes the following strategy:
    \begin{enumerate}
        \item Alice colors any old vertex on her first move.
        
        \item Suppose Bob colors a new vertex \(v\) with an uncolored old vertex \(w\) as a neighbor.
        If more than one such vertex \(w\) exists, pick one arbitrarily.
        Then, Alice colors \(w\) with a color different from that on \(v\), and minimally used in \(N(w)\).
        If more than one such color exists, pick one arbitrarily.
    
        \item If Bob colors a new vertex with no uncolored old neighbor, or if Bob colors an old vertex, then:
    
        \begin{enumerate}
            \item Alice colors a neighbor \(x\) of a colored old vertex, with a color different from both neighbors of \(x\).
            \item If no such vertex exists, Alice colors any uncolored old vertex with a color minimally used in its neighborhood.
            \item If no such old vertex exists, then Alice colors any new vertex with a color different from both its neighbors.
        \end{enumerate}
    \end{enumerate}

    Now we show that the above strategy is a winning strategy for Alice.

\begin{lemma}
    Suppose that Alice does not create an uncolored old vertex with a majority colored neighbor. Then at any time there can be at most one uncolored old vertex with a majority colored neighbor, which Alice colors in the very next move.    
\end{lemma}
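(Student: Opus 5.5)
We argue by induction on the number of moves, proving the invariant: after each of Alice's moves, no uncolored old vertex has a majority colored neighbor. After each of Bob's moves, at most one such vertex exists, and Alice's strategy colors it next. The invariant holds trivially before any move. The key structural fact is that, since every edge of \(G\) is subdivided at least once, old vertices form an independent set in \(G'\). Hence every neighbor of an old vertex is a new vertex of degree \(2\), and such a vertex \(x\) is majority colored exactly when it shares its color with at least one of its two neighbors (\(\lfloor 2/2 \rfloor = 1\)).

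Suppose the invariant holds after Alice's move, and Bob colors a vertex \(v\). Coloring a vertex can only make \(v\) itself, or colored neighbors of \(v\), majority colored; it cannot make a non-neighbor majority colored. We check the two cases.

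First, suppose \(v\) is old. Then its neighbors are new, and \(v\) becomes colored, so \(v\) itself is no longer an uncolored old vertex. A new neighbor \(x\) of \(v\) may become majority colored, but the other neighbor of \(x\) lies on the same subdivided path. That other neighbor is either new, or it is an old vertex \(w\) adjacent to \(x\) only when \(s(e)=1\). In the latter case \(w\) might be an uncolored old vertex, which is problematic. We must count such \(w\): any uncolored old vertex with a newly majority colored neighbor must be adjacent to some \(x \in N(v)\), where \(x\) has both its neighbors \(v\) and \(w\). Several neighbors of \(v\) could each produce such a \(w\). However, \(x\) is majority colored only if it is colored and shares a color with \(v\) or \(w\), and since \(w\) is uncolored, \(x\) must share \(v\)'s color. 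Alice's strategy rule 3(a) prevents this: whenever Alice colors a neighbor of a colored old vertex, she uses a color different from both neighbors. We therefore need to track that such \(x\) were colored before \(v\), and argue that the invariant forbids it. This is the main obstacle, and the statement may need strengthening to \emph{no new vertex adjacent to an uncolored old vertex is colored, unless that old vertex is colored immediately}. This strengthened claim is exactly what rule 2 ensures for Bob's moves on new vertices. With it, any colored \(x\) adjacent to an uncolored old \(w\) was colored by Bob, and Alice colored \(w\) right after, so no such \(x\) exists.

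Second, suppose \(v\) is new. Then \(v\) has two neighbors, and only \(v\) itself can newly become majority colored among vertices adjacent to uncolored old vertices; colored neighbors of \(v\) that become majority colored are its neighbors, and their uncolored old neighbors are at distance \(2\) from \(v\), which we handle as in the first case. Thus at most the uncolored old neighbors of \(v\) are affected. If \(v\) lies on a path with \(s(e)=1\), both neighbors could be old and uncolored. This is where we use that \(v\) is majority colored only if it matches a colored neighbor. So at most one of its neighbors is uncolored, giving at most one problematic \(w\). Rule 2 then has Alice color that \(w\), and the hypothesis that Alice never creates a new bad vertex restores the invariant. The bookkeeping that a single Bob move creates at most one bad vertex is the crux, and it relies on the degree-\(2\) observation together with the strengthened invariant.
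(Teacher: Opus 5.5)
Your inductive set-up and structural observations are sound, but the step you yourself flag as the crux rests on a false auxiliary claim. The strengthening ``no new vertex adjacent to an uncolored old vertex is colored unless that old vertex is colored immediately'' fails on $1$-subdivided edges, in two ways. First, if Bob colors the midpoint $x$ of a $1$-subdivided edge whose ends $u,w$ are both uncolored, rule~2 has Alice color only one of $u,w$, so $x$ stays colored next to an uncolored old vertex. Second, rule~3(a) lets Alice herself color the midpoint of a $1$-subdivided edge that has one colored end and one uncolored end. So the claim is not what rule~2 guarantees, and it is not preserved by Alice's moves. Your derived statement that every colored $x$ next to an uncolored old vertex was colored by Bob is also false. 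Your first appeal to rule~3(a) in Case~1 is misplaced as well: $v$ is uncolored until Bob's move, so 3(a) says nothing about the color of $x$. Since you never check your invariant against all of Alice's rules, the induction as written does not close.

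What is true, and suffices, is this invariant: after each of Alice's moves, every colored new vertex $x$ with an uncolored old neighbor has, as its other neighbor, a \emph{colored old} vertex whose color differs from that of $x$. It is preserved for four reasons:
\begin{itemize}
\item Alice never colors a new vertex with two uncolored old neighbors, since 3(a) needs a colored old neighbor and 3(c) needs all old vertices colored.
\item In 3(a) she avoids both neighbors' colors.
\item In rule~2 she colors an uncolored old neighbor of Bob's new vertex with a color different from it.
\item Coloring an old vertex never creates a violation.
\end{itemize}
This invariant gives your base invariant at once: such an $x$ has no neighbor of its own color, so it is not majority colored. It also closes both of your cases. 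In Case~1, the offending $x$ would need its other neighbor $v$ to have been colored before Bob's move. In Case~2, a same-colored neighbor $y$ of Bob's new vertex $v$ that has an uncolored old neighbor would need its other neighbor, which is the new vertex $v$, to be old and colored. This second sub-case differs from Case~1, because $y$ has only one old neighbor, so ``handle as in the first case'' genuinely needs the corrected invariant. Your remaining count is correct: a majority colored $v$ has at most one uncolored neighbor, and rule~2 then selects it. For comparison, the paper uses a first-counterexample argument. It reduces to two configurations: a $2$-subdivided edge with equally colored midpoints and uncolored ends, or two $1$-subdivided edges through an old vertex that shares its color with both midpoints. It then asks who colored the midpoints. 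The corrected invariant packages that same reasoning uniformly.
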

\begin{proof}
    Take the first instance of a counterexample.
    Then we must have one of the following cases:
    \begin{itemize}
        \item There is a \(2\)-subdivided edge in which both ends are uncolored, and both internal vertices are given the same color; i.e., there is a path \(uvwx\) where \(u\) and \(x\) are old and uncolored, and \(v\) and \(w\) are new and colored the same.
        \item There are two \(1\)-subdivided edges in which both ends are uncolored, and both internal vertices and the common old vertex they are adjacent to are all colored the same; i.e., there is a path \(uvwxy\), where \(u,w,y\) are old, \(v,x\) are new, and \(v,w,x\) are all colored the same.
    \end{itemize}
    Consider the first scenario.
    We ask, who colored \(v\) and \(w\)?
    By Alice's strategy, she would not have colored \(v\) or \(w\) while \(u\) and \(x\) are still uncolored, so Bob must have colored them.
    But as soon as Bob colors the first one, Alice would have colored its old neighbor.
    
    Next, consider the second scenario.
    Again, by Alice's strategy, she would have colored \(v\) or \(x\) only after \(w\) was colored, and in that case she would have chosen a color different from that on \(w\).
    Thus, Bob must have colored \(v\) and \(x\), but then Alice would have colored the old neighbor of whichever vertex between these that Bob colored first.
    
    Thus, neither configuration can arise.
\end{proof}

\begin{lemma}
    If \(v\) is an uncolored old vertex with a majority colored neighbor \(w\), then Bob created this configuration and Alice colors \(v\) in the very next move.
\end{lemma}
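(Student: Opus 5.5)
The plan is to argue by following the moves of the game, starting from the fact that every neighbor of an old vertex in \(G'\) is new. Since each edge of \(G\) is subdivided at least once, a new vertex has degree \(2\). It is therefore majority colored as soon as one of its two neighbors carries its color, because \(\lfloor 2/2\rfloor = 1\). So fix an uncolored old vertex \(v\) with a majority colored neighbor \(w\). Then \(w\) is new, its neighbors are \(v\) and some \(z\), and since \(v\) is uncolored we must have \(z\) colored with \(c(w)\). Here \(z\) is either old or new, the latter when \(s(e)\ge 2\) on that edge. I will look at the move that first produced this configuration, i.e.\ the later of the colorings of \(w\) and \(z\), and show that it was not Alice's.

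First I rule out Alice making the last of these colorings, checking each rule of her strategy. Under rule 2 or rule 3(b) she colors an old vertex. That can only create the configuration if the old vertex is \(z\) and \(v\) is still uncolored. In rule 2 the old vertex is adjacent to Bob's newly colored vertex. The subtle case is that \(z\) receives the color of an already colored new neighbor \(w\) whose other neighbor \(v\) is uncolored and old. Under rule 2, Alice chooses a color minimally used in \(N(z)\) that differs from Bob's vertex. Using the preceding lemma (at most one such configuration, handled immediately), I expect to show that any colored new neighbor \(w\) of \(z\) with an uncolored old other end would itself have triggered rule 2 when Bob colored it, so \(z\) would already be colored. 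Under rule 3(b), \(z\) has no colored new neighbor with an uncolored old other end, for the same reason. Under rules 3(a) and 3(c), Alice colors a new vertex with a color different from both its neighbors, so that vertex cannot be \(w\). It also cannot be \(z\) in the new–new case, since \(z\) would get a color different from \(w\). The first move (rule 1) colors a single old vertex, and nothing around it is colored yet.

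Hence Bob made the move creating the configuration, and it remains to see that Alice then colors \(v\). If Bob colored \(w\), then \(w\) is a new vertex with the uncolored old neighbor \(v\), and rule 2 applies. By the preceding lemma, \(v\) is the only uncolored old vertex with a majority colored neighbor, so Alice can choose \(v\) as the vertex \(w\) of rule 2. I need to make the ``pick one arbitrarily'' choice prefer such a vertex, and I will note this refinement. If Bob colored \(z\) while \(w\) was already colored with \(c(w)\), then \(w\) was colored earlier with \(v\) uncolored. I must show this is impossible, or at least that Alice still reacts. By the previous step, Alice never colors such a \(w\) while \(v\) is uncolored. If Bob colored \(w\) earlier, rule 2 would have made Alice color \(v\) (or some other old neighbor) right then, which brings in the ``more than one'' issue again.

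The main obstacle is exactly this tie-breaking ambiguity: a Bob move may have two uncolored old neighbors, or a configuration may be completed by coloring \(z\) rather than \(w\). I would handle it by invoking the preceding lemma, which guarantees uniqueness of the dangerous old vertex. I would then read rule 2 as prioritizing that vertex, and check that the other old endpoint of a new vertex with both ends uncolored is never left threatened.
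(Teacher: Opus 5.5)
Your first half (Alice never completes the configuration) follows essentially the paper's case analysis on who colored \(w\) and its other neighbor, and you even cover the case where \(z\) is new. The genuine gap is the second half, which you leave open and propose to close by changing Alice's strategy. The tie-breaking problem you identify does not actually occur, and your fix would not help if it did.

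If Bob completes the configuration by coloring \(w\), then \(z\) already carries \(c(w)\). So \(v\) is the \emph{only} uncolored old neighbor of \(w\), and rule 2 sends Alice to \(v\) with no choice involved.

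The case you could not close, completion by coloring \(z\) after \(w\), is impossible for an elementary reason you already half-stated under rule 2. There you wrote ``so \(z\) would already be colored''; it should read ``\(z\) or \(v\)''. The point is that \(w\) has exactly the two neighbors \(v\) and \(z\). Alice never colors such a \(w\), so Bob colored \(w\) while \(v\) was uncolored, and rule 2 then forced Alice to color \(v\) or \(z\) at once. It was not \(v\), which is still uncolored. So Alice colored \(z\), with a color different from \(c(w)\). This contradicts both that Bob colors \(z\) later and that \(z\) carries \(c(w)\). If \(z\) is new, \(v\) was the only candidate, so again a contradiction. This is the paper's step ``Bob colored \(w\), and then Alice colored \(x\) \ldots\ with a color different from that on \(w\).''

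Reading rule 2 as ``prioritize the dangerous vertex'' is a change of strategy, not a proof. It also could not rescue the case of an old \(z\) colored by Bob: Alice's reply would then fall under rule 3, and rule 3(a) never colors an old vertex. So the uniqueness from the preceding lemma is irrelevant there.

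Second, your argument that Alice never creates the configuration assumes without proof that under rule 2 (and 3(b)) she always has a legal color for \(z\) different from that of Bob's vertex. If both remaining colors were illegal at \(z\), she would be forced to copy Bob's color. She would then herself make Bob's vertex majority colored next to a possibly uncolored old vertex.

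This is where the paper does its real work, and where the preceding lemma is actually used. The count condition at \(z\) itself can block at most one color. So two blocked colors mean \(z\) already has a majority-colored neighbor, i.e.\ an earlier instance of the same configuration. By minimality Bob created it and Alice colored \(z\) on the next move. The preceding lemma rules out a second such vertex that could have diverted her.

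To have this available, set the whole proof up as a first-counterexample argument over the game, rather than analysing only the single move that produced one fixed configuration.
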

\begin{proof}
    Consider the first instance of a counterexample.
    By her strategy, Alice colors \(w\) only if \(x\), the other neighbor of \(w\), is an old colored vertex, but then she chooses a color for \(w\) different from that on \(x\), so \(w\) won't be majority-colored: such a choice of color is always possible since \(w\) has degree 2.
    So, the situation must have been that Bob colored \(w\), and then Alice colored \(x\), the other (old) neighbor of \(w\).
    As per strategy, she would have chosen a color for \(x\) different from that on \(w\), thereby avoiding making \(w\) majority-colored.
    Why is such a choice possible? If not, then it means the two remaining colors must be forbidden at \(x\), which is only possible if \(x\) has at least one majority-colored neighbor. 
    By minimality, this configuration was created by Bob, which meant that \(x\) would have been colored by Alice in the very next move after its creation, so \(x\) would have been colored before \(w\).
    The only case that \(x\) would not have been colored immediately in the next move is that there is another uncolored old vertex adjacent to a majority-colored vertex.
    But this is not possible by the previous lemma.
\end{proof}

\begin{theorem}
    Let \(G'\) be obtained from a finite simple graph \(G\) by subdividing each edge \(e \in E(G)\) \(s(e)\)-many times, where \(s(e) \ge 1\) is a positive integer.
    Then \(\mu_g(G') \le 3\).
\end{theorem}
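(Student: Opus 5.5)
We will show that Alice, playing the strategy described above with the palette \(\{1,2,3\}\), always has a legal move and that every vertex that gets colored can still be colored at the moment it is colored. Since the game on a finite graph ends exactly when all vertices are colored or some uncolored vertex has no legal color, it suffices to prove two things. First, no uncolored vertex ever becomes \emph{critical}, i.e.\ has all three colors forbidden. Second, each move prescribed to Alice is actually available and legal.

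\textbf{New vertices.} A new vertex \(y\) has degree \(2\), so \(\lfloor \deg(y)/2\rfloor = 1\). Color \(c\) is forbidden at \(y\) only if either both neighbors of \(y\) carry \(c\), or some neighbor of \(y\) is majority colored with \(c\). The neighbors of \(y\) are either two new vertices or one old and one new vertex. So at most two colors can be forbidden at \(y\): each neighbor contributes at most one forbidden color, namely its own. Hence new vertices are never critical. The same count shows that in steps 3(a) and 3(c) Alice can color a new vertex differently from both of its neighbors.

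\textbf{Old vertices.} An old vertex \(v\) has only new neighbors, and the old vertices form an independent set. By the critical configurations of Section~\ref{S:Preliminaries}, \(v\) becomes critical only if it has at least two majority colored neighbors, or one majority colored neighbor together with \(\lfloor \deg(v)/2\rfloor+1\) neighbors of a single other color. In either case \(v\) has at least one majority colored neighbor while still uncolored. By the two lemmas above:
\begin{itemize}
\item whenever an uncolored old vertex acquires a majority colored neighbor, Bob created this situation with a single move;
\item such a vertex is unique at that time, and Alice colors it immediately.
\end{itemize}
Bob's single move recolors exactly one new vertex \(w\), so \(v\) has at most one majority colored neighbor, namely \(w\), which forbids only \(c(w)\). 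Step 2 then lets Alice pick a color different from \(c(w)\) that is minimally used in \(N(v)\).

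\textbf{Main obstacle.} The key step, and the one I expect to be hardest, is checking that this color is legal at \(v\). Since the colors on \(N(v)\setminus\{w\}\) are split among three colors, one of the two colors other than \(c(w)\) appears on at most \(\lfloor(\deg(v)-1)/2\rfloor \le \lfloor \deg(v)/2\rfloor\) neighbors of \(v\). Giving that color to \(v\) therefore satisfies the majority condition at \(v\). It must also not break the condition at any neighbor of \(v\). A neighbor of \(v\) is a new degree-\(2\) vertex \(u\), and \(u\) is hurt only if \(u\) has the chosen color and its other neighbor also has it. So Alice must choose, among the colors other than \(c(w)\), one that is both minimally used and not forbidden through such \(u\). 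I would handle this by the same minimality/first-counterexample argument as in the lemmas: Alice's rules never color a new vertex the same as a colored neighbor, so a new neighbor \(u\) with two equal-colored neighbors must have been created by Bob, and Alice would have responded to it.

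Similarly, in step 3(b), before coloring an old vertex Alice checks that no majority colored neighbor exists (otherwise step 2 would have applied), and chooses a minimally used color, which is legal. Finally, Bob's moves are legal by the rules of the game, and no vertex becomes critical, so the game ends with all vertices colored. This gives \(\mu_g(G')\le 3\).
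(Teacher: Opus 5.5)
Your proof is correct and follows essentially the same route as the paper: both rely on the two preceding lemmas to show that Alice colors an uncolored old vertex as soon as Bob gives it its first majority-colored neighbor, so that vertex can never collect the two majority-colored neighbors of distinct colors it would need to become critical. The paper argues by contradiction (``who colored \(x\) and \(y\)?''), while you argue directly and make the legality of Alice's response explicit: the less-used of the two colors other than \(c(w)\) appears at most \(\lfloor(\deg(v)-1)/2\rfloor\) times in \(N(v)\). Note that your \emph{main obstacle} is already settled by your own observation that \(w\) is then the only majority-colored neighbor of \(v\).
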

    
\begin{proof}
    If Bob wins, he wins at an old vertex because every edge is subdivided at least once (so there is always at least one color available on a new vertex).
    Suppose \(v\) is the old vertex with no legal color available.
    Then it must have at least two majority-colored neighbors \(x\), and \(y\), with colors 1 and 2, say. Who colored \(x\) and y?
    As per the strategy Alice would have colored them only if they were neighbors of some colored old vertices.
    However, in this case, her choice of color would have avoided making them majority-colored, and such a choice always exists because only at most one color needs to be avoided on them.
    So, Bob colored both \(x\) and \(y\), and suppose he colored \(x\) before y.
    Why did Alice not color \(v\) immediately after Bob colored x?
    It could only be because \(x\) has another uncolored old vertex as a neighbor, which Alice chose to color.
    But then Alice would have chosen a color different from 1 for that neighbor, so that \(x\) is not majority-colored.
    Such a choice would not have been possible for Alice only if the remaining two colors were already forbidden.
    But then at least one of the two colors is forbidden because of a majority-colored neighbor, and the previous lemma says Alice would have colored it promptly.
    Hence the theorem.
\end{proof}

We remark that the bound in Theorem~\ref{T:Main_subdivisions} is tight even when the parent \(G\) is chosen among the class of graphs for which \(\mu_g(G) = 2\).
For example, let \(G\) be the claw graph \(K_{1,3}\), and \(G'\) be its \(1\)-subdivision.
Then \(\mu_g(G) = 2\), but \(\mu_g(G') = 3\).
Remarkably, if \(G''\) is the \(2\)-subdivision of \(G\), then we again have \(\mu_g(G'') = 2\) as we show below in Section~\ref{S:1subdivision_claw}.
Moreover, it is possible to show that if \(\tilde{G}\) is a large enough subdivision of \(G\), then we have \(\mu_g(\tilde{G}) = 3\) again, as shown below in Section~\ref{S:2subdivision_claw}.
This highlights the unusual behavior of the parameter under taking edge subdivisions.

\begin{figure}[h]
        \centering
        \begin{subfigure}{0.3\textwidth}
        \centering
            \begin{tikzpicture}[
                every node/.style={circle, draw, fill=white, inner sep=2pt},
                line width=0.9pt,
                smooth
                ]
        
                \node (v0) at ( 0,1) {};
                \node (v1) at ( 0,0) {};
                \node (v2) at (-1,0) {};
                \node (v3) at ( 1,0) {};

                \draw (v2)--(v0)--(v1);
                \draw (v0)--(v3);
            \end{tikzpicture}
            \caption{\(\mu_g(G)=2\)}
            \label{F:claw_a}
        \end{subfigure}
        \begin{subfigure}{0.3\textwidth}
        \centering
            \begin{tikzpicture}[
                every node/.style={circle, draw, fill=white, inner sep=2pt},
                line width=0.9pt,
                smooth
                ]
        
                \node (v0) at   ( 0,1) {};
                \node (v1) at (-.5,.5) {};
                \node (v2) at  (0, .5) {};
                \node (v3) at ( .5,.5) {};
                \node (v4) at  ( -1,0) {};
                \node (v5) at    (0,0) {};
                \node (v6) at    (1,0) {};

                \draw (v5)--(v2)--(v0)--(v1)--(v4);
                \draw (v0)--(v3)--(v6);
            \end{tikzpicture}
            \caption{\(\mu_g(G')=3\)}
            \label{F:claw_b}
        \end{subfigure}
        \begin{subfigure}{0.3\textwidth}
        \centering
            \begin{tikzpicture}[
                every node/.style={circle, draw, fill=white, inner sep=2pt},
                line width=0.9pt,
                smooth
                ]
        
                \node (v0) at (0,1.5) {};
                \node (v1) at (-.5,1) {};
                \node (v2) at ( 0,1) {};
                \node (v3) at (.5,1) {};
                \node (v4) at (-1,.5) {};
                \node (v5) at ( 0,.5) {};
                \node (v6) at ( 1,.5) {};
                \node (v7) at ( -1,0) {};
                \node (v8) at ( 0,0) {};
                \node (v9) at ( 1,0) {};

                \draw (v8)--(v5)--(v2)--(v0)--(v1)--(v4)--(v7);
                \draw (v0)--(v3)--(v6)--(v9);
            \end{tikzpicture}
            \caption{\(\mu_g(G'')=2\)}
            \label{F:claw_c}
        \end{subfigure}
        \caption{Value of \(\mu_g(G)\) for \(G = K_{1,3}\) and its subdivisions}
        \label{F:claw}
    \end{figure}

\subsection{1-Subdivision of Claw}
\label{S:1subdivision_claw}
Now we show that the bound is tight.
First note that Alice just needs two colors to win on \(K_{1,3}\) (as it is equivalent to a 3-star).
Now suppose Alice and Bob play the majority coloring game using 2 colors on the tree shown in figure~\ref{F:tree_depth2a}.
We will show that no matter how Alice plays, she is unable to avoid either of the partial colorings shown in figure~\ref{F:tree_depth2b}, where the vertex 6 is uncolored and the gray vertices may or may not be colored.

\begin{figure}
        \centering
        \begin{subfigure}{0.45\textwidth}
        \centering
            \begin{tikzpicture}
                [level distance=6mm,
                every node/.style={draw, circle, inner sep=1pt},
                level 1/.style={sibling distance=12mm},
                line width=0.9pt,
                smooth]
    
                \node {1}
               child{node{2}
                    child{node {3}}
                    }
                child{node{4}
                    child{node {5}}
                    }
                child{node{6}
                    child{node {7}}
                    };
            \end{tikzpicture}
            \caption{}
            \label{F:tree_depth2a}
        \end{subfigure}
        \begin{subfigure}{0.45\textwidth}
        \centering
            \begin{tikzpicture}
                [level distance=6mm,
                every node/.style={draw, circle, inner sep=1pt},
                level 1/.style={sibling distance=12mm},
                line width=0.9pt,
                smooth]
    
                \node [fill=red!30]{1}
                child{node [fill=red!30]{2}
                    child{node [fill=gray!20]{3}}
                    }
                child{node [fill=gray!20]{4}
                    child{node [fill=gray!20]{5}}
                    }
                child{node{6}
                    child{node [fill=cyan!30]{7}}
                    };
            \end{tikzpicture}
            \caption{}
            \label{F:tree_depth2b}
        \end{subfigure}
        \caption{Tree with all leaves at depth 2 and its unavoidable configurations in a majority coloring game with 2 colors}
        \label{F:tree_depth2_tight}
    \end{figure}

Suppose Alice starts by coloring 1.
Without loss of generality we may assume she starts with red, as we can swap the role of colors if she starts with blue.
Bob then colors 3 with blue which forces a red on 2.
If Alice colors 2 with red then Bob colors 7 with blue which forces a red on 6.
Note that this is the configuration shown in figure~\ref{F:tree_depth2b}.
But degree of 1 is 3 which means at most \(\half{3}=1\) of its neighbors can have the same color as itself.
Since 2 is already red, Alice is left with no legal color for 6.
Suppose Alice doesn't color 2.
Clearly, 4-5 and 6-7 are equivalent due to the symmetry and hence without loss of generality, it is sufficient to consider the cases of Alice coloring 4 and 5.
But no matter what she colors on 4 or 5, Bob can color 6 with red and we get the reflected version of~\ref{F:tree_depth2b}.
Again, Alice loses as this time she is left with no legal color for 2.

Suppose Alice doesn't start by coloring 1 and instead colors 2 (equivalent to 4 and 6 due to symmetry).
Bob then colors 1 with the same color and Alice again loses as before.
If Alice starts by coloring a leaf, say 7 (equivalent to 3 and 5 by symmetry), then Bob colors the 1 with the opposite color and again Alice loses.

Hence 3 colors are strictly needed for this tree which shows that the bound is tight.

We shall revisit the 1-subdivision of claw in Section~\ref{S:Fixed_depth_trees} (cf. figure~\ref{F:tree_depth2_tight}).

\subsection{2-Subdivision of Claw}
\label{S:2subdivision_claw}
For now we consider the 2-subdivision of claw shown in figure~\ref{F:claw_c}.
We draw it again in a more convenient manner with its vertices labeled.
We will show that Alice can win on this graph with two colors.

\begin{figure}[h]
    \centering
    \begin{tikzpicture}[
            every node/.style={circle, draw, fill=white, inner sep=1pt},
            line width=0.9pt,
            smooth
        ]

        \node (0) at (0,0) {0};
        \node (1) at (1,0) [rectangle, inner sep=2pt]{1};
        \node (2) at (2,0) {2};
        \node (3) at (3,0) [rectangle, inner sep=2pt]{3};
        \node (4) at (4,0) {4};
        \node (9) at (5,-1) {9};
        \node (5) at (5,0) [rectangle, inner sep=2pt]{5};
        \node (6) at (6,0) {6};
        \node (7) at (3,-1) {7};
        \node (8) at (4,-1) [rectangle, inner sep=2pt]{8};

        \draw (0)--(1)--(2)--(3)--(4)--(5)--(6);
        \draw (3)--(7)--(8)--(9);
    \end{tikzpicture}
    \caption{2-subdivision of \(K_{1,3}\)}
    \label{F:2-subdiv_star}
\end{figure}

First we make some observations regarding the structure of the graph and its implications on the majority coloring of the graph.
The vertices 0,6,9 are pendants and as a result a legal color is always available for them, so Alice need not worry about them.
Since 0 is a pendant vertex, 0 and 1 cannot have same color and as a result 1 cannot be majority colored.
Hence, at 2, at most one color can be forbidden, which is when 3 and 4 are colored same.
The same is true for vertices 4 and 7 due to symmetry.
Therefore, Alice only needs to make sure that she is able to color 1,3,5,8, which have been drawn differently to highlight the same.
These are precisely the vertices which can have two majority colored neighbors, if Alice is able to color them, she wins the game.

Alice starts by coloring 1 with red.
Bob can play his first move in the following ways:
\begin{itemize}
    \item If Bob colors 3, Alice colors Alice colors 5 with the same color.
    Now whatever Bob colors, Alice just colors 8 with a legal color.
    She can do this because in one move Bob can forbid at most one color at 8, either by coloring 9 or by coloring 7 with same color as 3.
    Hence she wins if Bob colors 3.
    \item If Bob colors 5 or 8 instead of 3, Alice colors 3 and she again wins as before.
    \item If Bob colors 0, Alice colors 3 with the same color as 1.
    Now if Bob colors 2, Alice colors 5 same as 3 and wins as before and if he colors 4 (or 7), Alice colors 5 (or 8) with a legal color and again wins.
    \item If Bob colors 2, Alice colors 3 with opposite color.
    Again, if Bob colors 4 (or 7), Alice colors 5 (or 8) and wins as before.
    And if he colors 6 (or 9), then also Alice wins by coloring 5 (or 8).
    \item If Bob starts by coloring 4, Alice colors 3 with the opposite color.
    Now at most one color can be forbidden at 5 and it has a legal color available.
    Alice thus colors 8 in the next move, regardless of what Bob colors and wins.
    Due to symmetrical structure, similar argument holds if Bob starts by coloring 7.
    \item If Bob colors 6, Alice colors 5 with the opposite color.
    Now if Bob colors 2,4 or 7, Alice colors 3 and if he colors 9, Alice colors 8.
    She again wins as before.
\end{itemize}

Thus no matter how Bob plays, Alice wins the game.
Hence for 2-subdivision of a \(K_{1,3}\), \(\mu_g(G)=2\).
This example again shows a very unusual behavior of the parameter \(\mu_g(G)\).
It first increased when number of vertices increased from \(K_{1,3}\) to its 1-subdivision and then decreased from its 1-subdivision to its 2-subdivision.

\section{Trees with all leaves at depth \(k \leq 4\) are game majority \(3\)-colorable}
\label{S:Fixed_depth_trees}

Let \(T\) be a nonempty rooted tree in which all leaves are at depth \(k\), where \(k \in \{1,2,3,4\}\).
If \(k = 1\), then \(T\) is a star, so \(\mu_g(T) = 2\) by Theorem~\ref{T:FSTTCS}(\ref{T:FSTTCSb}).
So let us assume \(k\in \{2,3,4\}\).

The key idea here is that if all leaves are at the same depth \(k\), then Alice can ensure that a critical vertex cannot appear on a vertex of depth \(k - 1\) by immediately coloring the parent once Bob colors a leaf.
If the overall depth of the tree is small, then critical vertices can also be avoided near the root.
Thus, a tree of ``small'' depth will simply have no wiggle room for Bob to create a critical vertex.

We begin with a lemma that is important for the proofs.

\begin{lemma}\label{L:not_maj_n-1}
    In any tree of depth at least 2, if the parent of a vertex at level \(n-1\) is not colored, the vertex cannot be majority colored. (root represents the $0^{th}$ level and \(n\) is the deepest level)
\end{lemma}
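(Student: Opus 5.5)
The plan is to use the structure forced by the standing hypothesis of this section: all leaves of \(T\) lie at the deepest level \(n\). Then every vertex \(v\) at level \(n-1\) has all of its neighbours except its parent at level \(n\), and each of those neighbours is a leaf. Since the tree has depth at least \(2\), we have \(n-1 \geq 1\), so \(v\) has a parent \(p\). Also \(v\) is not itself a leaf, so it has at least one child. Hence \(\deg(v) = c+1 \geq 2\), where \(c \geq 1\) is the number of children of \(v\), and the threshold \(\lfloor \deg(v)/2 \rfloor\) for \(v\) to be majority colored is at least \(1\).

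Next I would bound the number of neighbours of \(v\) sharing its colour at any stage of the game. If \(v\) is uncolored, there is nothing to prove, since the notion of being majority colored only applies to colored vertices. So assume \(v\) is colored. Every child \(u\) of \(v\) is a leaf, so \(\deg(u) = 1\) and \(\lfloor \deg(u)/2 \rfloor = 0\). Because both players must maintain the majority condition at every partial stage, a colored leaf \(u\) can never have the same colour as its unique neighbour \(v\) once both are colored. This holds regardless of which of \(u, v\) was colored first: whichever move completed the pair would have violated the condition at \(u\). Therefore no colored child of \(v\) shares the colour of \(v\).

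The only remaining neighbour is the parent \(p\), which is uncolored by hypothesis. So the number of neighbours of \(v\) with the same colour as \(v\) is \(0 < 1 \leq \lfloor \deg(v)/2 \rfloor\), and \(v\) is not majority colored.

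No step is a real obstacle. The only point needing care is that the degree bound \(\deg(v) \geq 2\) uses both hypotheses: the depth being at least \(2\) gives the parent, and the equal-depth leaf assumption guarantees that \(v\) has children and that all of them are leaves. If the lemma were read for arbitrary trees, the claim that all children of \(v\) are leaves could fail, so I would state explicitly that the lemma is applied under this section's hypothesis that all leaves have depth \(n\).
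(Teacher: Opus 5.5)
Your proof is correct and follows the same route as the paper's: the children of \(v\) are leaves and can never share its color, the parent is uncolored, so \(v\) has no same-colored neighbor, and your explicit check that \(\lfloor \deg(v)/2 \rfloor \geq 1\) makes precise a step the paper leaves implicit. One small correction to your closing remark: in any tree the children of a level-\((n-1)\) vertex are automatically leaves, since level \(n\) is the deepest, so what the equal-depth hypothesis actually supplies is that \(v\) is not itself a leaf (for a leaf, \(\lfloor 1/2 \rfloor = 0\) and the lemma would fail).
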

\begin{proof}
    Consider a vertex \(v\) at level \(n-1\) of a tree whose parent(v) at level \(n-2\) is uncolored.
    Since \(n \ge 2\), \(v\) is not the root. \(v\) has one parent and the rest of its neighbors are leaves.
    At most one of its neighbors, precisely parent(v), can have the same color as \(v\).
    But parent(v) is uncolored by assumption.
    All children of \(v\) are leaves.
    If any of them has the same color as \(v\), then it would break the majority condition for that leaf.
    Thus no neighbor of \(v\) has the same color as \(v\).
    But for \(v\) to be majority colored, we need exactly $m=\half{\deg(v)}$ of its neighbors to be of the same color.
    Thus \(v\) does not get majority colored.

    \emph{Note:} If degree of \(v\) is 2 or 3 and its parent has the same color then \(v\) can be majority colored.
\end{proof}

\begin{theorem}\label{T:depth_2}
    For trees in which all leaves have depth 2, \(\mu_g(T) \le 3\).
\end{theorem}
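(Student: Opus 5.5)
Throughout, the tree \(T\) is rooted at \(r\) with all leaves at depth \(2\). So \(T\) consists of the root \(r\), its children \(u_1,\dots,u_d\) at level \(1\), and leaves at level \(2\); every \(u_i\) has at least one leaf child, hence \(\deg(u_i)\ge 2\). Leaves never fail to have a legal color, because a leaf with parent colored \(c\) must avoid only \(c\) (its own constraint forbids the same color as its unique neighbor), and the parent can forbid at most one further color. Hence Bob can only win at \(r\) or at some \(u_i\). Alice's strategy is as follows.
\begin{enumerate}
\item On her first move she colors \(r\).
\item Whenever Bob colors a leaf whose parent \(u_i\) is uncolored, Alice immediately colors \(u_i\) with a legal color that differs from the color of \(r\).
\item Whenever Bob colors some \(u_i\), Alice colors an uncolored \(u_j\) (if any) with a color that does not make \(r\) majority colored, preferring \(u_j\) having colored leaf children.
\item Otherwise she colors any legal vertex.
\end{enumerate}

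The key step is to show that every \(u_i\) always has a legal color. The neighbors of \(u_i\) are \(r\) and its leaf children, and by Lemma~\ref{L:not_maj_n-1} no leaf child is ever majority colored. So the forbidden colors at \(u_i\) come from three sources: \(r\) being majority colored (at most one color, namely \(c(r)\)), the leaf children (each colored leaf forbids its own color at \(u_i\)), and the count constraint at \(u_i\) itself. By step 2, Alice colors \(u_i\) right after Bob colors its first leaf child, and Alice herself never colors a leaf below an uncolored \(u_i\). So at the moment \(u_i\) is colored, at most one leaf child is colored, and it forbids one color. Together with \(c(r)\), at most two colors are forbidden, leaving at least one of the three. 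One subtlety must be handled here: Bob may color \(u_i\) himself, but he then chooses a legal color, so that case needs no argument.

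The main obstacle is the root \(r\). Alice colors it first, so it is never the uncolored critical vertex. Thereafter it forbids at most its own color \(c(r)\) at the \(u_i\), which is exactly what the counting above uses. Alice never needs to prevent \(r\) from becoming majority colored, since \(r\) forbidding \(c(r)\) is already budgeted for. The remaining check is that the color chosen in step 2 is legal: it must differ from \(c(r)\) and from the single colored leaf, and a third color is always available. I would also verify that the color is legal with respect to \(u_i\)'s own majority count. With only one same-colored neighbor allowed when \(\deg(u_i)\in\{2,3\}\), the chosen color matches no neighbor, so \(u_i\) is fine. Hence every vertex always has a legal color, Alice wins with \(3\) colors, and \(\mu_g(T)\le 3\).
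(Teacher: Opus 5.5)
Your plan is the paper's: color \(r\) first, and whenever Bob colors a leaf under an uncolored \(u_i\), immediately color \(u_i\) with a color different from that leaf and from \(c(r)\). An uncolored \(u_i\) then sees at most two forbidden colors. That count is right, and you correctly note that Alice need not stop \(r\) from becoming majority colored.

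\textbf{The gap.} The argument rests on the invariant that Alice never colors a leaf whose parent is uncolored. Your step 4 (``otherwise she colors any legal vertex'') does not guarantee this, and step 4 is exactly what fires when Bob colors a leaf whose parent is already colored. This breaks the strategy. Take \(r\) with children \(u_1,u_2,u_3\), each having two leaves, and play:
\begin{enumerate}
\item Alice colors \(r\) with \(1\); Bob colors \(u_1\) with \(1\), so \(r\) is now majority colored.
\item Alice (step 3) colors \(u_2\) with \(2\); Bob colors a leaf of \(u_1\).
\item Alice, under step 4, colors a leaf of \(u_3\) with \(2\); Bob colors the other leaf of \(u_3\) with \(3\).
\end{enumerate}
Now \(u_3\) has no legal color: \(1\) is forbidden by \(r\), and \(2\) and \(3\) by the two leaves.

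The repair is what the paper does. When step 2 does not apply, Alice colors an uncolored level-1 vertex (with a color other than \(c(r)\)) if one exists, and she colors leaves only once every level-1 vertex is colored. This maintains the invariant ``after each of Alice's moves, no uncolored \(u_i\) has a colored leaf child,'' and then your two-color budget goes through at every moment of the game.

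\textbf{Lemma~\ref{L:not_maj_n-1}.} Your appeal to this lemma is a misreading. The lemma concerns a level-1 vertex whose parent is uncolored, which never occurs here since \(r\) is colored first. It says nothing about leaves, and a colored leaf is always majority colored (\(0=\lfloor 1/2\rfloor\) same-colored neighbors). That is precisely why each colored leaf forbids its color at its parent, as your very next sentence uses. Delete that claim.

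\textbf{The count constraint at \(u_i\).} It is cleaner to say explicitly that this constraint never forbids an additional color. It needs at least \(\lfloor \deg(u_i)/2\rfloor+1\ge 2\) neighbors of one color, which means \(r\) and the single colored leaf sharing a color, and that color is already forbidden by the leaf.
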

\begin{proof}
    In her first move, Alice colors the root.
    For leaves, only one color may be forbidden, the color of its parent (if it is colored).
    So levels 0 and 2 are taken care of, we only need to define a strategy for level 1 which ensures that 3 colors are not forbidden for a vertex at level 1.
    
    If Bob colors a leaf and its parent is uncolored, color the parent with a color different from the leaf and the root.
    If Bob colors a leaf and its parent is colored, color a vertex at level 1 with a color different from the root.
    If such a vertex is not available, color any leaf.
    If Bob colors a vertex at level 1, color another vertex at level 1, if available, else color any leaf.

    This strategy ensures that she never needs more than 3 colors and hence \(\mu_g(T) \le 3\).

    Moreover, the bound is tight.
    Recall the 1-subdivision of claw discussed in Section~\ref{S:1subdivision_claw}.
    Note that it is identical to a tree with all leaves at depth 2 and it has \(\mu_g(G)=3\) which proves that the bound is tight.
\end{proof}

\begin{theorem}\label{T:depth_3}
    For trees in which all leaves have depth 3, \(\mu_g(T) \le 3\).
\end{theorem}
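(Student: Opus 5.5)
Alice roots the tree at the root \(r\) and colors \(r\) on her first move. Levels \(0,1,2,3\) then play different roles, and we go through them in turn.

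\emph{Levels 3 and 2.} Leaves (level \(3\)) always have a legal color, since only their parent's color can be forbidden. For a vertex \(v\) at level \(2\), Lemma~\ref{L:not_maj_n-1} shows that \(v\) can be majority colored only if its parent is colored with the same color. Moreover, leaves never share a color with their parent. So a level-\(2\) vertex \(v\) can lose colors only through its parent: if the parent \(p\) is majority colored, \(p\)'s color is forbidden at \(v\); if \(v\) has \(\lfloor \deg(v)/2\rfloor+1\) children of one color, that color is forbidden. The second case is avoided by a rule: whenever Bob colors a leaf whose parent is uncolored, Alice immediately colors that parent with a color different from the leaf and from the grandparent (if colored). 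With this rule, at most two colors are ever forbidden at a level-\(2\) vertex when it is colored. One is the parent's color (if the parent is majority colored), and the other is a leaf color \(c\), which arises only if Bob colored \(\lfloor\deg(v)/2\rfloor\) children \(c\) over several moves while Alice was responding elsewhere. To forestall even this, Alice colors the level-\(2\) parent as soon as Bob colors its first child.

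\emph{Level 1.} The real danger is a vertex \(u\) at level \(1\). Its parent \(r\) is colored, and its children at level \(2\) may be majority colored, which is \configuration{1a} or \configuration{2b}. Alice's aim is to color each level-\(1\) vertex before it can become critical. Her strategy, ordered by priority, is:
\begin{enumerate}
\item respond to a leaf colored by Bob by coloring its parent as above, choosing a color not equal to the grandparent's color whenever possible, so that the level-\(2\) vertex is not majority colored with its parent;
\item if Bob colors a level-\(2\) vertex \(w\) whose parent \(u\) at level \(1\) is uncolored, color \(u\) with a color that is legal, differs from \(w\), and is not forbidden at \(r\);
\item otherwise, color an uncolored level-\(1\) vertex, else any vertex with a legal color.
\end{enumerate}
The invariant to prove is this: whenever Alice ends her move, no uncolored level-\(1\) vertex has a majority-colored child, and every level-\(2\) vertex with an uncolored parent is either uncolored or colored by Bob. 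It then follows that Bob's single move can create at most one majority-colored child under an uncolored level-\(1\) vertex \(u\), which forbids one color at \(u\). A second color can be forbidden only by \(r\) being majority colored, and Alice can still find a third color and color \(u\) at once. This mirrors the two lemmas of Section~\ref{S:Subdivisions}: there is at most one pending threat, and Alice clears it next move.

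\emph{Main obstacle.} Rule 1 and rule 2 can conflict. Bob colors a leaf under a level-\(2\) vertex \(w\) whose level-\(1\) parent \(u\) is uncolored. Alice must then color \(w\), and if Bob has arranged that \(w\) can only avoid the leaf color by matching \(u\)'s eventual color, there is a problem. Since \(u\) is uncolored, however, Lemma~\ref{L:not_maj_n-1} guarantees that \(w\) is not majority colored at that moment, so the only risk is to \(u\) later. I would handle this by showing that Alice's coloring of \(w\) (avoiding one leaf color) leaves two options. Then \(w\) becomes a majority-colored child of \(u\) only after \(u\) is colored, which is harmless, or it threatens \(u\) at most by forbidding one color. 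Bob's next threat to \(u\) is answered by rule 2. Carefully checking that at most two colors are ever forbidden at a level-\(1\) vertex at the time Alice colors it, together with the \configuration{2b}-type count of same-colored children, is the step I expect to need the most case analysis. Tightness follows as in Theorem~\ref{T:depth_2}.
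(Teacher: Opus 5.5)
There is a genuine gap, and it is not where you put the difficulty. You call level \(1\) ``the real danger'' and leave its safety to an unfinished case analysis, but no case analysis is needed.

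Since all leaves have depth \(3\), level \(2\) is the level \(n-1\) of Lemma~\ref{L:not_maj_n-1}. So a level-\(2\) vertex with an uncolored parent is never majority colored. Hence an uncolored level-\(1\) vertex \(u\) has at most one majority-colored neighbor, namely \(r\), and this forbids at most one color. At most one further color can already occupy \(\lfloor\deg(u)/2\rfloor+1\) of its neighbors. So \(u\) always has a legal color, whatever either player does. In particular, none of the critical configurations can occur at level \(1\). The first clause of your invariant holds automatically. Your claim that one Bob move can create a majority-colored child under an uncolored level-\(1\) vertex is false: he can create none. This is the paper's one-line argument for level \(1\), and your ``main obstacle'' (the rule 1/rule 2 conflict, which also breaks the second clause of your invariant) disappears once you see it.

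The real work is at level \(2\), and there both your analysis and your strategy have a hole. A colored leaf is allowed \(\lfloor 1/2\rfloor=0\) same-colored neighbors. So it is majority colored as soon as it is colored, and it forbids its color at its parent immediately. Thus a level-\(2\) vertex does not lose colors ``only through its parent'', and a leaf color is forbidden after one child, not after \(\lfloor\deg(v)/2\rfloor+1\) children. A level-\(2\) vertex \(w\) is killed once it has two children of distinct colors and a majority-colored parent of the third color.

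Your immediate-response rule handles leaves colored by Bob. But your rule 3 (``else any vertex with a legal color'') lets Alice herself color a leaf under an uncolored level-\(2\) vertex. For example, take \(\deg(r)\ge 2\):
\begin{enumerate}
\item Alice colors \(r\) with \(1\).
\item Bob colors a degree-\(2\) level-\(1\) vertex \(u\) with \(1\), making \(u\) majority colored.
\item When rule 3 later reaches its last clause, Alice may color a leaf of \(u\)'s child \(w\) (which has at least two leaves) with \(2\).
\item Bob colors another leaf of \(w\) with \(3\), and \(w\) has no legal color.
\end{enumerate}

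The missing ingredient is the paper's rule that Alice never colors a leaf while an internal vertex is uncolored. With it, an uncolored level-\(2\) vertex has at most one colored child at any moment. So at most two colors are forbidden there: that child's color, and the parent's color if the parent is majority colored. Together with the automatic safety of levels \(0\), \(1\) and \(3\), this finishes the proof.
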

\begin{proof}
    In her first move, Alice colors the root.
    For leaves, only one color may be forbidden, the color of its parent (if it is colored).
    So levels 0 and 3 are taken care of, we only need to define a strategy for levels 1 and 2.
    If Bob colors a vertex at level 2 or 3 and its parent is uncolored, Alice colors the parent using a color minimally used in its neighbors.
    Otherwise, she colors any other uncolored vertex at level 1, with a color different from the root, which is least used in its children.
    
    If such a vertex is not present, she colors vertices at level 2.
    She does not color a leaf unless all other internal nodes are colored.
    
    For any critical configuration at a vertex \(v\), we need at least one majority colored child of uncolored \(v\).
    Thus, any critical configuration cannot occur at level \(1\), by Lemma~\ref{L:not_maj_n-1}.

    The critical configurations cannot occur at level 2 also.
    For any critical configuration at a vertex \(v\), we need at least 2 children of uncolored \(v\) to be colored.
    Alice does not color a leaf unless all internal nodes are colored by our strategy.
    If Bob colors any leaf and its parent is uncolored, Alice colors it in the next move.

    Thus, 3 colors are sufficient for trees of depth 3.
\end{proof}

\begin{theorem}\label{T:depth_4}
    For trees in which all leaves have depth 4, \(\mu_g(T)\le 3\).
\end{theorem}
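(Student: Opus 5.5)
We follow the template of Theorems~\ref{T:depth_2} and~\ref{T:depth_3}. Alice colors the root on her first move. Leaves (level~4) can have at most one forbidden color, so they are never critical, and the root is colored at once. The work is to prevent a critical configuration from appearing at an uncolored vertex on levels 1, 2 or 3.

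Alice's strategy is a priority rule.
\begin{enumerate}
\item \emph{Response rule.} If Bob colors a vertex \(u\) at level 2, 3 or 4 whose parent \(p\) is uncolored, Alice colors \(p\) next. She chooses a legal color different from \(c(u)\) that is least used among the colored neighbors of \(p\). Among ties she prefers a color different from the color of the parent of \(p\), if that is colored.
\item \emph{Default rule.} Otherwise she colors an uncolored internal vertex, working top-down: first level 1, then level 2, then level 3. She always picks a color that is least used among its colored neighbors and differs from its colored parent wherever possible.
\item She never colors a leaf unless no internal vertex is uncolored.
\end{enumerate}

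The aim is an invariant in the spirit of~\cite{ChawdaNanotiSankarnarayanan2026}: \emph{no uncolored vertex ever has a majority colored child}. All critical configurations in Figure~\ref{F:crit_config_trees} need such a child, so the invariant suffices.

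\begin{enumerate}
\item \emph{Level 3.} By Lemma~\ref{L:not_maj_n-1}, a level-3 vertex with uncolored parent is never majority colored. So no level-2 vertex can go critical through a majority colored child.
\item \emph{Level 3 as the critical vertex.} A critical level-3 vertex would need at least two colored children, and its children are leaves. Alice never colors a leaf early. When Bob colors the first leaf below an uncolored level-3 vertex, Alice colors that vertex immediately. So this cannot happen.
\item \emph{Levels 1 and 2.} Here the argument needs care. A level-2 vertex \(w\) can become majority colored with \(\lfloor \deg(w)/2\rfloor\) equally colored leaf-level grandchildren and children while its level-1 parent is uncolored. Suppose Bob colors a child \(x\) of \(w\) at level 3, with \(w\) uncolored. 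Alice responds by coloring \(w\) with a color different from \(c(x)\), so \(x\) never becomes majority colored through \(w\). Thus the only way a level-2 vertex \(w\) gets colored is by Bob directly, or by Alice via the response or default rules.
\begin{itemize}
\item If Bob colors \(w\) while its parent \(v\) is uncolored, Alice colors \(v\) at once by the response rule.
\item If Alice colors \(w\), then by the top-down default rule \(v\) is already colored, unless she is responding.
\end{itemize}
We must check that whenever Alice responds at \(w\), its parent \(v\) is not left uncolored while \(w\) is majority colored. A vertex \(w\) colored in response has a child of a different color. So for \(w\) to be majority colored, \(w\) must have degree large enough and \(\lfloor\deg(w)/2\rfloor\) colored children agreeing with it. This requires many later Bob moves below \(w\), and each of those moves lets Alice spend her reply on \(v\). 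So once one child of \(w\) matches \(w\)'s color, Alice should color \(v\) if it is uncolored. We add this as a third response rule: if Bob's move creates a colored vertex at level 2 with uncolored parent that has a same-colored child, Alice colors that parent.
\end{enumerate}

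The main obstacle is simultaneity. A single Bob move could, in principle, create two threatened uncolored vertices at once: for example a level-3 vertex with uncolored parent, and its level-2 parent with uncolored parent. We would handle this as in the subdivision section, using a minimal-counterexample lemma. At any time there is at most one uncolored vertex requiring an immediate response. The reason is that Bob's single colored vertex \(u\) has exactly one parent, and the response rules only ever target the parent of the vertex just colored, or of the level-2 vertex just made dangerous. Depth 4 means the only ancestors that can be affected are at levels 1–3, and each response prevents a majority colored child under an uncolored parent.

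Finally, we verify that Alice always has a legal color when she responds. The vertex she colors has, by the invariant, no majority colored child. So at most one color is forbidden: either by its colored parent being majority colored, or by too many children of one color, which can only be Bob-colored leaves or level-3 vertices. Hence two colors remain, and she can also avoid \(c(u)\). Checking this case analysis carefully is the routine but lengthy part.
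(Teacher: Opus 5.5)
\textbf{There is a genuine gap: your final legality check fails, and the failure is exactly \configuration{2b} at a level-1 vertex.}

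When Alice responds under your third rule, or under the first rule after Bob colors a level-2 vertex, the vertex \(v\) she must color \emph{does} have a majority colored child. Bob has just created it. Your invariant can hold only after Alice's moves, never at the moment she responds.

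Moreover, ``parent majority colored'' and ``too many children of one color'' are not mutually exclusive. So at a level-1 vertex \(v\) three colors can be forbidden at once:
\begin{itemize}
\item the root's color, if the root is majority colored;
\item the color of the freshly majority colored child;
\item a color carried by \(\lfloor \deg(v)/2\rfloor+1\) children of \(v\).
\end{itemize}
Nothing in your argument bounds the number of same-colored children of an uncolored level-1 vertex. Your response to Bob's level-3 moves actually produces them. It makes Alice color a level-2 vertex \(w\) while its level-1 parent is uncolored, choosing between two colors with no guidance, since your tie-break refers to the parent of \(w\), which is uncolored.

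Here is a concrete losing line for your strategy as stated. Let the root \(r\) have children \(v_1,v_2,v_3\). Let each \(v_i\) have four children, each level-2 vertex two children, and each level-3 vertex two leaves.
\begin{enumerate}
\item Alice colors \(r\) with \(1\). Bob colors \(v_1\) with \(1\), so \(r\) is now majority colored. Alice's default move colors, say, \(v_2\) (the case \(v_3\) is symmetric).
\item Below each of three children \(w_1,w_2,w_3\) of \(v_3\), Bob colors a level-3 vertex with \(1\). Each time, Alice colors \(w_i\) with \(2\), as your response rule permits. Now \(v_3\) has three children colored \(2\), while \(\lfloor 5/2\rfloor=2\).
\item Bob colors a level-3 child of \(w_4\) with \(2\). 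Alice colors \(w_4\) with \(3\), which is allowed since \(1\) and \(3\) tie.
\item Bob colors the other child of \(w_4\) with \(3\), making \(w_4\) majority colored.
\end{enumerate}
All of \(1,2,3\) are now forbidden at \(v_3\), so Bob wins.

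The missing idea is the one the paper uses. The level-3 response is unnecessary, because by Lemma~\ref{L:not_maj_n-1} a level-3 vertex with uncolored parent is never majority colored. It is also harmful, as the line above shows. Instead, Alice should never color a level-2 vertex before its parent, and she should color the level-1 parent immediately whenever Bob colors a level-2 vertex. Then an uncolored level-1 vertex has at most one colored child when Alice colors it. So only the root's color and that child's color can be forbidden, which rules out \configuration{2b}.
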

\begin{proof}
    In her first move Alice colors the root.
    If in any move Bob colors a vertex which leads to the creation of majority colored vertex, Alice in her move colors the parent of the majority colored vertex, if it is uncolored.
    If Bob colors a vertex at level 2, Alice colors its parent at level 1.
    Otherwise, she colors any uncolored vertex at level 1, with a color different from the root.
    If no such vertex is available, she colors a vertex at level 2, 3 or 4, whichever is available first.

    Let's see how this strategy ensures 3 colors are sufficient.
    If Bob colors a leaf, it becomes majority colored, and Alice would color the parent if it is uncolored.
    Thus, no critical configuration arises at level 3.
    The critical configurations cannot arise at leaves and the root was colored in the first move.
    Also, by Lemma~\ref{L:not_maj_n-1}, there can be no critical configuration at level 2.
    Thus, level 0, 2, 3, 4 are taken care of.
    
    We claim that our strategy ensures that a critical configuration does not arise at level 1.
    First, we show that following our strategy, Alice does not create a majority colored vertex with uncolored parent at level 1.
    There are 2 ways she could have done this:
    \begin{itemize}
        \item She colored a vertex \(v\) at level 2 with a color, say \(c\), and $m=\half{\deg(v)}$ of its children were already of color \(c\).
        But by our strategy, she wouldn't have chosen it over the uncolored vertex at level 1 unless there was a majority colored child of \(v\).
        But the children of \(v\) are at level 3 and since \(v\) is not colored yet, by Lemma~\ref{L:not_maj_n-1}, none of its child at level 3 can be majority colored.
        Hence, this situation does not arise.
        
        \item She colored a vertex at level 3 with \(c\) and \(v\) at level 2 and $m-1$ of its children were already \(c\).
        She would have chosen to color this vertex only if it had a majority colored child.
        But it's children are leaves.
        Let's say Bob colored that leaf \(c'\)
        Now, Alice could chose a color different from \(c\) and \(c'\) and she doesn't create a majority colored vertex.
    \end{itemize}

    Consequently, none of the configurations 1a, 1b, 2a and 2c shown in Figure~\ref{F:crit_config_trees} can occur at a  vertex level 1.
    Because, as soon as Bob creates one of the two majority colored vertices required for the configuration, Alice colors the parent at level 1 and she is safe.
    
    For 2b, the root has to become majority colored.
    First we show that Alice can always choose a color for any vertex \(v\) at level 1, different from the color of root.
    If not, then it is possible in two cases:
    \begin{itemize}
        \item If uncolored \(v\) has two majority colored children.
        Again Alice does not create them as shown before.
        As soon as Bob creates one of them, Alice colors \(v\) and she has a color available other than color of root.
        
        \item If uncolored \(v\) has one majority colored child, say \majcol{1}, and \(m+1\) of its children are of the same color, say 2, at level 2.
        Alice would not have colored any vertex at level 2 if there was an uncolored vertex at level 1.
        For choosing to color a vertex at level 2 over a vertex at level 1, our strategy requires a majority colored vertex at level 3, which does not happen by Lemma~\ref{L:not_maj_n-1}.
        Thus those \(m+1\) children and \majcol{1} were colored by Bob.
        But as soon as Bob colored first one of them, Alice would have colored the parent \(v\) by our strategy and she had a color available other than the color of root.       
    \end{itemize}

    Thus, Alice always has a color different from the root available for vertices at level 1, regardless of whether the root is majority colored or not.
    Consequently, \configuration{2b} cannot occur at \(v\), because if it did, it contradicts the fact that Alice has a color available for \(v\).

    This completes the proof.
\end{proof}

\section{Some natural strategies on general trees which fail for Alice}
\label{S:Natural_strategy}

In this section, we show why some natural strategies fail to be a winning strategy for Alice in the majority coloring game on trees with 3 colors.

First in Section~\ref{S:Game_coloring_Strategy}, the well-known marking game strategy of Faigle--Kern--Kierstead--Trotter~\cite{FaigleKernEtAl1993} on trees basically requires Alice to color the vertex which is closest to Bob's move and lies on the smallest subtree containing all previously colored vertices.
Figure~\ref{F:marking_game} below illustrates why this strategy does not help reduce the bound for the majority coloring game on trees.

Next, in Section~\ref{S:Strategy_color_parent}, we show why the strategy used to prove the bound for max degree 4 trees in Theorem~\ref{T:FSTTCSa} does not extend to general trees.

Lastly, as discussed in Section~\ref{S:Preliminaries}, a winning strategy for Alice on a general tree must address both types of critical configurations.
A natural way for Alice to decide which vertex to pick is to assign a ``criticality'' index to each vertex based on how many moves it will take for a vertex to become critical.
We discuss two such carefully designed parametric strategy variants and illustrate their failure on a specifically engineered trees in Section~\ref{S:lambda_strategy}.

\subsection{The coloring game strategy}
\label{S:Game_coloring_Strategy}
Faigle--Kern--Kierstead--Trotter~\cite{FaigleKernEtAl1993} showed that \(\mathrm{col}_g(T)\leq 4\), i.e. Alice has a strategy for the marking game such that whenever she chooses to color an uncolored vertex, it has at most 3 marked neighbors.
Since the same strategy can also be applied to the coloring game to show that \(\chi_g(T) \leq 4\), for the sake of convenience we will refer to their strategy as the \emph{coloring game strategy}.

Now note that for \configuration{2} with \(v\) as a critical vertex, we need at least \(m+3\) colored neighbors, where \(m=\half{\deg(v)}\). 
If $m \ge 1$, then \(m+3 \ge 4\) which Alice can avoid using the coloring game strategy.
Also, if \(m=0\), then the degree of \(v\) can be \(2m+1=1\), i.e. \(v\) is a leaf and the critical configuration cannot occur at a leaf.

This shows that using the coloring game strategy, Alice can avoid \configuration{2} on any tree.
Thus, a natural direction is to study to what extent it helps Alice in the majority coloring game.
We give an example showing that the strategy does not work for the majority coloring game.

Consider the majority coloring game on a tree shown in Figure~\ref{F:marking_game}.
Alice plays using the coloring game strategy.
Odd numbers show Alice's moves and even show Bob's.
\begin{figure}[h]
\centering
    \begin{tikzpicture}
        [level distance=5mm,
        font=\scriptsize,
        every node/.style={draw,circle, inner sep=2pt},
        level 1/.style={sibling distance=30mm},
        level 2/.style={sibling distance=30mm},
        level 3/.style={sibling distance=30mm},
        level 4/.style={sibling distance=20mm},
        level 5/.style={sibling distance=20mm},
        level 6/.style={sibling distance=10mm},
        line width=0.8pt,
        smooth]

        \node [fill=red!30]{1}
        child{node [fill=cyan!30] {3}
            child{node [fill=red!30]{7}
                child{node [fill=cyan!30] {6}
                child{node [fill=cyan!30] {5}
                child{node {u}
                        child{node [fill=red!30]{2}}
                        child{node [fill=green!30]{8}}}
                child{node {v}
                        child{node [fill=cyan!30]{4}}
                        child{node [inner sep=3.5pt]{}}}
                    }
                child{node {...}}
                }
                child{node {...}}}
            child{node {...}}
        }
        child{node {...}};
    \end{tikzpicture}
    \caption{Failure of coloring game strategy when applied to majority coloring game}
    \label{F:marking_game}
\end{figure}

Before describing the game, we give a brief description of the strategy given in ~\cite{FaigleKernEtAl1993}.
Alice colors any vertex \(r\) and roots the tree on that vertex \(r\).
She maintains a sub-tree \(T_0\), which is initialized as \(T_0=\{r\}\).
Whenever Bob colors any vertex \(v\), denote by \(P\) the directed path from \(r\) to \(v\).
Alice after Bob's move colors the last vertex common between \(P\) and \(T_0\), if it is colored, she colors any uncolored vertex in \(T_0\) and if all vertices in \(T_0\) are colored, she colors any vertex \(u\) adjacent to \(T_0\).
She updates \(T_0\) as \(T_0 \coloneqq T_0 \cup P\) or \(T_0:=T_0 \cup P \cup \{u\}\) accordingly.

\emph{Description of game:} 
Alice colors 1 with red (arbitrary choice), and roots the tree at 1 and \(T_0=\{1\}\).
When Bob colors 2 with red, \(P=\{1,3,7,6,5,u,2\}\) and there's no uncolored vertex common in \(P\) and \(T_0\).
So Alice colors 3, which is a vertex adjacent to \(T_0\) and now \(T_0=\{1,3,7,6,5,u,2\}\).
Now Bob colors 4 with blue and this time \(P=\{1,3,7,6,5,v,4\}\).
This time, 5 is the last vertex common between \(P\) and \(T_0\) and is uncolored.
So Alice colors 5 with a color, say blue and updates \(T_0\) and now \(T_0=\{1,3,7,6,5,u,v,2,4\}\).
Bob then colors 6 with blue so that \(P=\{1,3,7,6\}\) and since 7 is the last vertex common between \(P\) and \(T_0\), Alice colors 7.
Now Bob colors 8 with green and wins the game as no legal color is left for \(u\).

Since 2 and 8 are leaves, their neighbors cannot have the same color, otherwise it violates majority condition.
So red and green are no longer available for \(u\).
Also coloring \(u\) with blue it violates majority condition for 5 because degree of 5 is three and thus only one neighbor of 5 can have same color as 5 as per the majority condition.
Hence Bob wins the game.

Even if Alice used red at 5, Bob could have played in exact same manner and then could have done green at the other child of \(v\), leaving no color for \(v\).
And if Alice chose green at 3, then also Bob could have played as above and forbidden all three colors at \(u\) or \(v\).

\subsection{A natural extension of the strategy of Theorem~\ref{T:FSTTCS}(\ref{T:FSTTCSa})}
\label{S:Strategy_color_parent}

One natural strategy that one can think of for Alice in the majority coloring game is to color the parent of any majority colored vertex created by Bob instantly and to use minimum used colors on siblings.
This was the core strategy used for proving Theorem~\ref{T:FSTTCS}(\ref{T:FSTTCSa}) in~\cite{ChawdaNanotiSankarnarayanan2026}).
However, such a naive strategy fails to extend as a winning strategy for Alice for arbitrary trees.

Consider the majority coloring game played by Alice using this strategy on a section of tree as shown in figure~\ref{F:color_p}.
Here, odd numbers depict Bob's move and even depict Alice's moves.
The vertices \(w,1,5,7,9,11\) are leaves.
Note that coloring a leaf makes it majority colored instantly as the degree of leaf is one.
The gadget shown in Figure~\ref{F:color_pb} is present at each of \(u_i's\) in Figure~\ref{F:color_pa}.

\emph{Game description:} Suppose that many copies of this gadget are present in the tree and Alice plays the first move somewhere far.
Bob starts by coloring the gadget \(u_1\).
Bob successively colors 2,4 and 6 with red which makes them majority colored and Alice thus colors their parents, alternating between the two available colors.
Clearly, 2 out of 3 children of \(u_1\) will have same color, here green.
Bob then uses the lesser used color, here blue on 8 which is a leaf.
Now Alice has to color \(u_1\) and only 2 colors are available:red and green.
Green makes \(u_1\) majority colored and red doesn't.
(If Alice kept 2 children of \(u_1\) blue and 1 green, then red and blue would be available with blue making it majority colored.)

\begin{figure}[h]
    \centering
\begin{subfigure}[b]{0.48\textwidth}
    \centering
    \begin{tikzpicture}
        [level distance=7mm,
        every node/.style={draw,circle, inner sep=2pt},
        level 1/.style={sibling distance=20mm},
        level 2/.style={sibling distance=20mm},
        level 3/.style={sibling distance=10mm},
        line width=0.8pt,
        scale=0.9,
        smooth]
    
        \node [inner sep=1pt]{\(v_4\)}
        child { node [inner sep=1pt]{\(v_3\)}
            child { node [xshift=-8mm, inner sep=1pt] {\(v_c\)}
                child { node [shape=rectangle, fill=white, inner sep=2pt, draw] {$u_1$} }
                child { node [shape=rectangle, fill=white, inner sep=2pt, draw] {$u_2$} }
                child { node [shape=rectangle, fill=white, inner sep=2pt, draw] {$u_3$} }
                child { node {w} }
            }
            child { node [inner sep=1pt]{\(v_2\)}
                child { node [inner sep=1pt]{\(v_1\)} }
                child { node [inner sep=3.5pt]{} }
            }
        }
        child { node [inner sep=3.5pt]{}
            child { node [xshift=7mm, inner sep=3pt]  {} }
            child { node [xshift=-7mm, inner sep=3pt] {} }
        };
    \end{tikzpicture}
    \caption{}
    \label{F:color_pa}
\end{subfigure}
\begin{subfigure}[b]{0.48\textwidth}
\centering
    \begin{tikzpicture}
        [level distance=7mm,
        every node/.style={draw,circle, inner sep=2pt},
        level 1/.style={sibling distance=12mm},
        level 2/.style={sibling distance=6mm},
        line width=0.8pt,
        scale=0.9,
        smooth]
    
        \node [shape=rectangle, draw, inner sep=3pt, fill=white] {$u_1$}
        child { node [fill=green!30, inner sep=1pt] {3}
                        child { node [fill=red!30, inner sep=1pt] {2} }
                        child { node [inner sep=3.5pt]{} }
                    }
                    child { node [fill=cyan!30, inner sep=1pt] {5}
                        child { node [fill=red!30, inner sep=1pt] {4} }
                        child { node [inner sep=3.5pt]{} }
                    }
                    child { node [fill=green!30, inner sep=1pt] {7}
                        child { node [fill=red!30, inner sep=1pt] {6} }
                        child { node [inner sep=3.5pt]{} }
                    }
                    child { node [fill=cyan!30, inner sep=1pt] {8} };
    \end{tikzpicture}
    \caption{}
    \label{F:color_pb}
\end{subfigure}
\caption{Gadgets for counter-example to the extension of strategy of Theorem~\ref{T:FSTTCS}(\ref{T:FSTTCSa})}
\label{F:color_p}
\end{figure}

Suppose she chooses red at \(u_1\).
Then Bob starts coloring gadget at \(u_2\) in the same way as \(u_1\).
Again, she has to choose from two colors at \(u_2\), one of them makes it majority colored and the other is red.
Supposes she again chooses red and then again at \(u_3\) when Bob colors the gadget.
Alice thus ends up coloring 3 neighbors of \(v_c\) with red which forbids red at \(v_c\) because degree of \(v_c\) is 5 and at most 2 of its neighbors can have the same color as itself.
Bob then goes and colors the leaf \(v_1\) with red and Alice colors its parent with a legal color, say blue.
Bob colors \(v_3\) with blue which makes \(v_3\) with majority colored (as its degree is 3) and Alice colors \(v_4\).
Now red and blue both are forbidden at \(v_c\), so Bob colors the leaf \(w\) with green and hence Alice loses the game as no legal color is left for \(v_c\).
The game is shown in Figure~\ref{F:color_m+1}.

\begin{figure}[h]
    \centering
\begin{subfigure}[b]{0.48\textwidth}
    \centering
    \begin{tikzpicture}
        [level distance=7mm,
        every node/.style={draw,circle, inner sep=2pt},
        level 1/.style={sibling distance=20mm},
        level 2/.style={sibling distance=20mm},
        level 3/.style={sibling distance=10mm},
        line width=0.8pt,
        scale=0.9,
        smooth]
    
        \node [inner sep=1pt, fill=green!30]{\(v_4\)}
        child { node [inner sep=1pt, fill=cyan!30]{\(v_3\)}
            child { node [xshift=-8mm, inner sep=1pt] {\(v_c\)}
                child { node [shape=rectangle, fill=red!30, inner sep=2pt, draw] {$u_1$} }
                child { node [shape=rectangle, fill=red!30, inner sep=2pt, draw] {$u_2$} }
                child { node [shape=rectangle, fill=red!30, inner sep=2pt, draw] {$u_3$} }
                child { node [fill=green!30]{w} }
            }
            child { node [inner sep=1pt, fill=cyan!30]{\(v_2\)}
                child { node [inner sep=1pt, fill=red!30]{\(v_1\)} }
                child { node [inner sep=3.5pt]{} }
            }
        }
        child { node [inner sep=3.5pt]{}
            child { node [xshift=7mm, inner sep=3pt]  {} }
            child { node [xshift=-7mm, inner sep=3pt] {} }
        };
    \end{tikzpicture}
    \caption{Picking a color for \(u_i\) which does not make it majority colored}
    \label{F:color_m+1}
\end{subfigure}
\begin{subfigure}[b]{0.48\textwidth}
\centering
    \begin{tikzpicture}
        [level distance=7mm,
        every node/.style={draw,circle, inner sep=2pt},
        level 1/.style={sibling distance=20mm},
        level 2/.style={sibling distance=20mm},
        level 3/.style={sibling distance=10mm},
        line width=0.8pt,
        scale=0.9,
        smooth]
    
        \node [inner sep=1pt, fill=red!30]{\(v_4\)}
        child { node [inner sep=1pt, fill=cyan!30]{\(v_3\)}
            child { node [xshift=-8mm, inner sep=1pt] {\(v_c\)}
                child { node [shape=rectangle, fill=green!30, inner sep=2pt, draw] {$u_1$} }
                child { node [shape=rectangle, fill=white, inner sep=2pt, draw] {$u_2$} }
                child { node [shape=rectangle, fill=white, inner sep=2pt, draw] {$u_3$} }
                child { node [fill=red!30]{w} }
            }
            child { node [inner sep=1pt, fill=cyan!30]{\(v_2\)}
                child { node [inner sep=1pt, fill=green!30]{\(v_1\)} }
                child { node [inner sep=3.5pt]{} }
            }
        }
        child { node [inner sep=3.5pt]{}
            child { node [xshift=7mm, inner sep=3pt]  {} }
            child { node [xshift=-7mm, inner sep=3pt] {} }
        };
    \end{tikzpicture}
    \caption{Picking a color which makes \(u_i\) majority colored}
    \label{F:color_maj}
\end{subfigure}
\caption{Consequences of coloring \(u_i\)}
\label{F:color_p_conseq}
\end{figure}

Now suppose Alice colors \(u_1\) with green instead of red.
This makes \(u_1\) majority colored and green is forbidden at \(v_c\).
Bob then colors \(v_1\) with green which makes it majority colored and Alice colors its parent \(v_2\) with a legal color, say blue.
Bob colors \(v_3\) with the same color, which makes it majority colored and Alice colors its parent.
Again two colors, blue and green, are forbidden at \(v_c\) and forbids the third color, red, by coloring the leaf \(w\).
Alice thus loses the game, which is shown in Figure~\ref{F:color_maj}.

This shows that the natural strategy of always coloring the parent does not work.
Note that in both Figure~\ref{F:color_m+1} and~\ref{F:color_maj} Alice ends up forbidding a color at \(v_c\) which helps Bob.

\subsection{The parametric strategies}
\label{S:lambda_strategy}

Now we discuss a comprehensive parametric approach that involved defining a parameter for each vertex and Alice deciding her move based on value of parameter for each vertex.
However, it failed to yield a winning strategy for Alice.

Firstly, consider the majority coloring game with two colors on the tree shown in Figure~\ref{F:counter1}.
If Alice begins by coloring vertex 1 red, Bob wins by coloring vertex 2 green.
However, Alice has a winning strategy on this tree.
Thus, the choice of the vertex that Alice attacks is an important part of her strategy.
Similarly, for the majority coloring game with three colors, we ask whether the choice of vertex affects Alice's winning strategy.
\begin{figure}[h!]   
    \centering
    \begin{tikzpicture}[
            every node/.style={circle, draw, fill=white, inner sep=1pt},
            line width=0.9pt,
            smooth
        ]
       
        \node (3) at (3,0) [fill=red!30] {1};
        \node (4) at (4,0) {4};
        \node (7) at (4,-1) [fill=green!30] {2};
        \node (5) at (5,0) {3};
        
        \draw (3)--(4)--(5);
        \draw (4)--(7);
    \end{tikzpicture}
    \caption{A counter example which shows the impact of Alice starting vertex. Note that the vertex $1$ is colored red by Alice and the vertex $2$ is colored green by Bob.}
    \label{F:counter1}
\end{figure}

To address this problem, we introduce a parameter \(\lambda(v)\) for each vertex \(v\) of the tree, denoting the number of moves required to force \(v\) into one of the configurations shown in Figure~\ref{crit_config}. More precisely, we define two parameters, \(\lambda_1\) and \(\lambda_2\), denoting the minimum number of moves required to force \(v\) into \configuration{1} in Figure~\ref{F:config1} and \configuration{2} in Figure~\ref{F:config2}, respectively. Alice uses \(\lambda(v)\) to decide which vertex to color next. The remainder of this section presents two methods for calculating these parameters, with the second improving on the first.

\textbf{Preliminaries}
Let \(G=(V,E)\) be a graph.
Throughout the algorithm, the neighborhood \(N(v)\) of each vertex \(v\) is taken from the \emph{original} graph \(G\), even as colors are assigned progressively.
Let \(\deg(u)\) denote the degree of a vertex \(u\) in \(G\), and let \(\deg^u(u)\) denote the number of its currently uncolored neighbors, recomputed after each coloring step.
The palette is $\colors=\{r,g,b\}$. For each vertex $v\in V$, let \(\mathrm{color}(v)\) denote its color, where \( \mathrm{color}(v) \in \colors\) if \(v\) is colored and \(\mathrm{color}(v)=\phi\) otherwise.
Also, recall that a colored vertex \(u\) is said to be \emph{majority colored} if the
number of neighbors of \(u\) in the graph \(G\) having the
same color as \(u\) is exactly \(\half{\deg(u)}\).

\subsubsection{The \(\lambda\)-strategy:} In this section, we introduce the \(\lambda\)-strategy, which involves calculating the parameter \(\lambda(v)\) for each vertex \(v\) of the tree. Before defining these parameters, we introduce the terminology and notation used throughout this section.
\begin{definition}[Eligible colors]
\label{def:eligible-colors}
For any vertex \(v\), whether colored or uncolored, a color \(\in\{r,g,b\}\) is \emph{eligible} if no neighbor of \(v\) is majority colored with color \(c\). Thus,
\[
E(v)=\{r,g,b\}\setminus
\{
\operatorname{color}(u):
u\in N(v),\
u\text{ is majority colored}
\}.
\]
Define
\[
c(v)=|E(v)|.
\]
\end{definition}

As discussed at the beginning of this section, the parameters $\lambda_1(v)$ and $\lambda_2(v)$ denote the minimum number of moves required, regardless of whether Alice or Bob makes them, to place \(v\) in the configurations shown in Figures~\ref{F:config1} and~\ref{F:config2}, respectively.
In the first configuration, \(v\) is adjacent to three majority colored vertices, each with a different color.
The number of moves required to make a vertex \(u\) majority colored is $\lfloor \deg^u(u)/2\rfloor+1$: coloring \(u\) and $\lfloor \deg^u(u)/2\rfloor$ of its neighbors with the same color makes \(u\) majority colored. We therefore calculate $\lambda_1(v)$ for any vertex \(v\) as follows.

For every \emph{uncolored} vertex \(v\), define
\[
\lambda_1(v)=
\begin{cases}
\displaystyle
\min_{\substack{S\subseteq N(v)\\|S|=c(v)}}
\left(
\sum_{u\in S}\left\lfloor\frac{\deg^u(u)}{2}\right\rfloor+1
\right),
&
|N(v)|\ge c(v),\\[2.5ex]
\infty,
&
|N(v)|<c(v).
\end{cases}
\]

Similarly, in the second configuration, \(v\) is adjacent to two majority colored vertices with different colors and has $\lfloor \deg^u(v)/2\rfloor+1$ neighbors colored with the remaining color.
We therefore calculate $\lambda_2(v)$ for any vertex \(v\) as follows.

\[
\lambda_2(v)=
\begin{cases}
\displaystyle
\min_{\substack{S\subseteq N(v)\\|S|=c(v)-1}}
\left(
\sum_{u\in S}\left\lfloor\frac{\deg^u(u)}{2}\right\rfloor+1
\right)
+\left\lfloor\frac{\deg^u(v)}{2}\right\rfloor+1,
&
\begin{array}{l}
c(v)\ge 1,\\
|N(v)|\ge c(v)-1,
\end{array}
\\[4ex]
\infty,
&
\text{otherwise.}
\end{cases}
\]

Finally, for any vertex \(v\), we define $\lambda(v)=\min\{\lambda_1(v),\lambda_2(v)\}$, which denotes the minimum number of moves required to place \(v\) in either of the two configurations.

The parameters $\lambda_1(v)$, $\lambda_2(v)$, and \(\lambda(v)\) are updated after each round for every vertex \(v\).
Therefore, uncolored neighbors are included when calculating these parameters.

\textbf{Round behavior:} Alice colors the vertices in the following manner:
\begin{enumerate}
    \item Select an uncolored vertex with the smallest \(\lambda(v)\).
    If several vertices attain the minimum, choose
    one uniformly at random.

    \item Let \(v\) be the selected vertex. Choose a color from
    \(E(v)\) that satisfies both of the following conditions:
    \begin{enumerate}
        \item It occurs least frequently among already colored
        vertices at graph distance at most two from \(v\).

        \item It appears on at most half of the already colored
        neighbors of \(v\).
    \end{enumerate}

    If several colors satisfy both conditions, choose one uniformly at random.

    If there are no colored vertices within distance two of \(v\), every eligible color has count zero.
    Choose one of these
    colors uniformly at random.

    If \(E(v)=\varnothing\) for any uncolored vertex \(v\), the algorithm reports \textbf{Bob Wins} and terminates.
\end{enumerate}

Unfortunately, the strategy described above does not guarantee a win for Alice.
We present a counterexample in Figure~\ref{F:game11august}.
\begin{figure}[h]
\centering
\begin{tikzpicture}[
    vertex/.style={circle, draw, minimum size=3.5mm, line width=0.8pt, inner sep=1pt, font=\fontsize{6}{6}\selectfont\bfseries},
    line width=0.9pt,
    smooth,
    scale=0.8
]
\begin{scope}
\def\vertexdata{
    0/0.0/0.0,
    1/-0.8/1.2/g,
    2/1.3/-0.2/b,
    3/0.8/-1.6,
    4/-0.2/2.4,
    5/-1.9/0.8/g,
    6/-1.8/2.0/g,
    7/-1.2/0.7,
    8/2.1/-0.4,
    9/2.5/0.1,
    10/1.8/0.7/r,
    11/-1.2/-2.1,
    12/1.0/-2.9,
    13/2.5/-1.6,
    14/0.2/-1.3,
    15/2.1/3.2/b,
    16/0.8/3.7/b,
    17/-0.9/3.6/b,
    18/1.3/2.4/r,
    19/-3.2/2.5/r,
    20/-3.6/1.0,
    21/-3.5/-0.5,
    22/-2.8/0.6,
    23/-1.0/2.2,
    24/-1.8/2.9,
    25/-2.4/2.1,
    26/-3.0/-2.0,
    27/-2.6/-2.8,
    28/-1.6/-3.2,
    29/-1.7/-1.7,
    30/0.3/-4.2,
    31/1.5/-4.2,
    32/2.5/-3.2,
    33/1.9/-2.8,
    34/3.2/-2.5,
    35/4.0/-1.5,
    36/3.9/-0.7,
    37/3.0/-0.5,
    38/3.1/3.1/r,
    42/1.9/4.2,
    43/1.1/4.7,
    44/0.1/4.7,
    45/-0.8/4.7,
    46/-1.5/4.5,
    47/-2.2/4.1,
    48/-2.6/3.7,
    49/-3.5/3.5,
    50/-4.1/3.0,
    51/-4.5/2.5,
    52/-4.8/1.8,
    53/-4.8/1.1,
    54/-4.8/0.3,
    55/-4.8/-0.3,
    56/-4.4/-1.0,
    66/-3.9/-1.8,
    67/-4.1/-2.4,
    68/-3.6/-2.8,
    69/-3.8/-3.3,
    70/-3.3/-3.8,
    71/-2.8/-4.1,
    72/-2.2/-4.1,
    73/-1.6/-4.5,
    74/-0.9/-4.5,
    75/-0.5/-4.8,
    76/0.0/-5.1,
    77/0.8/-5.0,
    78/1.2/-5.1,
    79/1.8/-5.0,
    80/2.7/-4.6,
    81/3.1/-4.2,
    82/3.5/-3.7,
    83/3.9/-3.2,
    84/4.1/-2.8,
    85/4.7/-2.4,
    86/4.7/-1.9,
    87/5.1/-1.4,
    88/5.3/-0.8,
    89/5.2/-0.3,
    90/5.0/0.2,
    91/4.7/0.7,
    92/4.2/1.0,
    93/3.1/4.3/g,
    94/4.8/3.8,
    95/3.9/4.8,
    96/3.0/5.2
    }
\foreach \id/\xx/\yy/\state in \vertexdata {
    \coordinate (p\id) at (\xx,\yy){};
}
\foreach \id/\xx/\yy/\state in \vertexdata {
    \def\vertexfill{gray!30}
    \ifdefstring{\state}{r}{\def\vertexfill{red!40}}{}
    \ifdefstring{\state}{g}{\def\vertexfill{green!40}}{}
    \ifdefstring{\state}{b}{\def\vertexfill{blue!40}}{}

    \node[vertex,fill=\vertexfill] (v\id)
        at (p\id) {\id};
}

\foreach \parent/\children in {
    0/{1,2,3},
    1/{4,5,6,7},
    2/{8,9,10},
    3/{11,12,13,14},
    4/{15,16,17,18},
    5/{19,20,21,22},
    6/{23,24,25},
    11/{26,27,28,29},
    12/{30,31,32,33},
    13/{34,35,36,37},
    15/{38,93},
    16/{42,43,44},
    17/{45,46,47},
    19/{48,49,50},
    20/{51,52,53},
    21/{54,55,56},
    26/{66,67},
    27/{68,69,70},
    28/{71,72,73,74},
    30/{75,76,77},
    31/{78,79,80},
    32/{81,82,83},
    34/{84,85},
    35/{86,87,88},
    36/{89,90,91,92},
    93/{94,95,96},
    }{
        \foreach \child in \children {
            \draw (v\parent)--(v\child); 
            }
    }
\node[
    vertex,
    fill=gray!30,
    draw=orange,
    line width=1.8pt,
    minimum size=5.5mm
] at (p4) {4};
\end{scope}
\end{tikzpicture}
\caption{A Counterexample for \(\lambda\)-Strategy}
\label{F:game11august}
\end{figure}

The issue with the previous strategy is that the parameter \(\lambda(v)\) for each vertex \(v\) is calculated solely based on the uncolored degrees of its neighbors, without considering the colors already used in their neighborhoods.
Consequently, a vertex that may create a problem for Alice can be overlooked. 

\begin{table}[ht]
\centering
\begin{tabular}{|c|c|c|c|c|}
\hline
Move  & Player & Vertex & Color \\ \hline
1   & Alice & 93 & Green (g) \\ \hline
2  & Bob   & 38 & Red (r)   \\ \hline
 3  & Alice & 16 & Blue (b)  \\ \hline
 4  & Bob   & 17 & Blue (b)  \\ \hline
 5  & Alice & 15 & Blue (b)  \\ \hline
 6  & Bob   & 18 & Red (r)   \\ \hline
 7  & Alice & 6  & Green (g) \\ \hline
 8  & Bob   & 10 & Red (r)   \\ \hline
9  & Alice & 2  & Blue (b)  \\ \hline
 10 & Bob   & 1  & Green (g) \\ \hline
 11 & Alice & 19 & Red (r)   \\ \hline
 12 & Bob   & 5  & Green (g) \\ \hline
\end{tabular}
\vspace{0.3cm}
\caption{Vertex coloring moves by Alice and Bob for the tree in Figure~\ref{F:game11august}.}
\label{tab:vertex-coloring}
\end{table}

The tree in Figure~\ref{F:game11august} provides an example in which Alice can lose by following the \(\lambda\)-strategy.
Table~\ref{tab:vertex-coloring} lists the vertices and colors chosen by Alice and Bob at each move.
After move 6, when Bob colors vertex 18 red, green is the only eligible color for vertex 4.
However, the \(\lambda\)-strategy does not force Alice to color vertex 4 green on her next move.
This is because, after move 6, $\lambda(4)=3$, while other vertices, such as vertices 6 and 19, also have a \(\lambda\)-value of 3.
The strategy allows ties for the minimum \(\lambda\)-value to be broken randomly.
Thus, Alice chooses vertex $6$ on her next move, which eventually leads to her loss.

The issue is that, although we calculate the number of moves required for a vertex to reach configuration 1 or configuration 2, this calculation does not account for the colors of its neighbors.
To illustrate this, consider the same example. Vertex $4$ is more likely to reach configuration 2 after Bob colors vertex 18 red.
However, because vertex $4$ has more neighbors, the current strategy does not capture this threat.
To incorporate information about the colors of neighboring vertices, we propose a different strategy in the next section.

\subsubsection{The \((\lambda, \mu)\)-strategy:}

To address the issue discussed in the previous section, we introduce the parameter \(\mu_i(v)\) for each vertex \(v\) and color $i \in \colors$.
This parameter denotes the minimum number of moves required to make \(v\) majority colored with color \(i\).
The \(\lambda_1\) and \(\lambda_2\) values for each vertex are then calculated using the \(\mu\)-values of its neighbors.

For each vertex \(v\) and color $i \in \colors$, we initialize \(\mu_i(v)\) as follows:
\[
\mu_i(v)=
\begin{cases}
1,
& \deg(v)=1,\\[1ex]
\infty,
& \deg(v)\neq 1
  \text{ and }\ell(v)>
  \left\lfloor\frac{\deg(v)}{2}\right\rfloor,\\[1ex]
\displaystyle
\left\lfloor\frac{\deg(v)}{2}\right\rfloor+1,
& \text{otherwise},
\end{cases}
\]
where $\ell(v)=|\{u\in N(v):\deg(u)=1\}|$.

For a vertex \(v\) to be majority colored, the number of its non-pendant neighbors must be at least half its degree.
This is why we initialize $\mu_i(v)=\infty$ in the second case.

\begin{definition}[Eligible colors]
For any vertex \(v\), colored or uncolored, a color
\(i\in\colors\) is \emph{eligible} if both of the following
conditions hold:
\begin{enumerate}
    \item No neighbor of \(v\) is majority colored with color \(i\).
    \item \(\mu_i(v)\neq 0\).
\end{enumerate}
Thus,
\[
E(v)=
\left\{
i\in\colors:
\begin{array}{l}
\mu_i(v)\neq 0,\text{ and no neighbor of }v\\
\text{is majority colored with color }i
\end{array}
\right\}.
\]
 Define
\[
c(v)=|E(v)|.
\]
\end{definition}

For a vertex \(v\), \(\mu_i(v)\) denotes the minimum number of moves required to make \(v\) majority colored with color \(i\).
Taking the minimum over all eligible colors of \(v\) gives the minimum number of moves required to make \(v\) majority colored. Formally, define
\[
\mu(v)=\min_{i \in E(v)} \mu_i(v).
\]

Using the parameter \(\mu\), we define the \(\lambda_1\) and \(\lambda_2\)-values.
The \(\lambda_1\)-value of a vertex \(v\) denotes the minimum number of moves required for \(v\) to have three majority colored neighbors of distinct colors.
Similarly, its \(\lambda_2\)-value denotes the minimum number of moves required for \(v\) to have two majority colored neighbors of distinct colors and at least half of its neighbors colored with the remaining color.

Formally, for every non-pendant vertex \(v\), define
\[
\lambda_1(v)=
\begin{cases}
\displaystyle
\min_{\substack{A\subseteq N(v)\\|A|=c(v)}}
\sum_{u\in A}\mu(u),
& |N(v)|\ge c(v),\\[2.5ex]
\infty,
& |N(v)|<c(v).
\end{cases}
\]
Similarly, define
\[
\lambda_2(v)=
\begin{cases}
\displaystyle
\min_{\substack{A\subseteq N(v)\\|A|=c(v)-1}}
\left(\sum_{u\in A}\mu(u)+\mu(v)\right),
&
\begin{array}{l}
c(v)\ge 1,\\
|N(v)|\ge c(v)-1,
\end{array}\\[3ex]
\infty,
& \text{otherwise}.
\end{cases}
\]

Finally, for any vertex \(v\), we define $\lambda(v)=\min\{\lambda_1(v),\lambda_2(v)\}$, which denotes the minimum number of moves required to place \(v\) in either of the two configurations.

We use the following conventions:
\begin{itemize}
    \item A sum over the empty subset is \(0\). Consequently, when \(c(v)=0\),
    \[
    \lambda_1(v)=0,
    \qquad
    \lambda_2(v)=\infty.
    \]
    \item For every pendant vertex \(v\), set
    \[
    \lambda(v)=\infty.
    \]
\end{itemize}
Whenever either player assigns color \(i\) to an uncolored vertex \(u\), the \(\mu\)-values are updated as follows:
\begin{enumerate}
    \item If \(u\) is non-pendant, set
    \[
    \mu_i(v)\leftarrow\mu_i(v)-1
    \qquad\text{for every }v\in N(u).
    \]
    Also set
    \[
    \mu_i(u)\leftarrow\mu_i(u)-1,
    \qquad
    \mu_j(u)\leftarrow\infty
    \quad\text{for every }j\in\colors\setminus\{i\}.
    \]
    \item If \(u\) is pendant, let \(v\) be its unique neighbor and set
    \[
    \mu_i(v)\leftarrow 0.
    \]
\end{enumerate}
The \(\mu\) and \(\lambda\) values are also updated, as they depend on the $\mu_i$-values.

\begin{figure}[h]
\centering
\begin{tikzpicture}[
    x=0.015cm,
    y=-0.015cm,
    vertex/.style={circle, draw, minimum size=3.5mm, line width=0.8pt, inner sep=1pt, font=\fontsize{6}{6}\selectfont\bfseries},
    line width=0.9pt,
    smooth,
    scale=0.8
]
\path[use as bounding box] (180,130) rectangle (1235,830);
\begin{scope}
\def\vertexdata{
    0/714/495/b,
    1/739/401/,
    2/601/481/g,
    3/653/567/b,
    4/793/555/b,
    5/823/459/g,
    6/672/316/r,
    7/664/378/g,
    8/454/462/r,
    9/543/501/,
    10/559/634/r,
    11/680/623/,
    12/860/636/r,
    13/883/554/g,
    14/955/440/r,
    15/853/398/,
    16/752/259/r,
    17/696/237/b,
    18/615/246/r,
    19/570/280/g,
    20/569/388/,
    21/587/338/g,
    22/404/391/g,
    23/354/421/r,
    24/340/476/b,
    25/371/514/r,
    26/522/554/,
    27/464/531/g,
    28/456/631/r,
    29/458/674/b,
    30/524/700/r,
    31/574/714/b,
    32/666/708/b,
    33/722/690/g,
    34/825/701/g,
    35/882/712/r,
    36/954/685/b,
    37/966/645/r,
    38/998/589/,
    39/990/547/g,
    40/1036/482/g,
    41/1069/448/r,
    42/1054/393/b,
    43/1001/373/r,
    44/938/334/,
    45/878/320/g,
    46/845/201/g,
    47/814/171/,
    48/740/176/,
    49/767/156/,
    50/705/144/g,
    51/654/156/,
    52/617/159/,
    53/564/159/g,
    54/509/204/,
    55/543/197/,
    56/470/208/,
    57/454/244/,
    58/502/425/,
    59/491/363/,
    60/521/290/,
    61/475/313/r,
    62/354/311/,
    63/303/322/,
    64/309/360/,
    65/264/347/,
    66/227/384/g,
    67/223/420/,
    68/223/446/g,
    69/201/479/,
    70/219/502/,
    71/250/523/g,
    72/240/553/,
    73/283/575/,
    74/508/594/,
    75/584/576/,
    76/328/570/,
    77/376/587/r,
    78/880/225/,
    79/347/631/g,
    80/318/661/,
    81/375/670/,
    82/325/694/,
    83/350/732/r,
    84/391/758/,
    85/430/749/,
    86/448/789/,
    87/490/782/g,
    88/529/793/,
    89/556/814/,
    90/600/788/,
    91/642/803/r,
    92/686/813/,
    93/709/779/r,
    94/754/782/,
    95/791/768/,
    96/823/802/,
    97/842/767/,
    98/870/805/,
    99/900/804/g,
    100/927/773/,
    101/1006/770/g,
    102/1053/752/,
    103/1084/719/,
    104/1036/693/,
    105/1102/686/,
    106/1076/651/,
    107/1050/615/,
    108/1152/603/,
    109/1105/583/r,
    110/1128/547/,
    111/1121/523/,
    112/1177/514/,
    113/1157/461/,
    114/1195/483/,
    115/1213/442/,
    116/1179/411/g,
    117/1187/385/g,
    118/1176/349/,
    119/1135/317/,
    120/1118/348/,
    121/1099/296/,
    122/1040/301/g,
    123/1047/270/,
    124/1003/240/,
    125/917/225/r
}
\foreach \id/\xx/\yy/\state in \vertexdata {
    \coordinate (p\id) at (\xx,\yy){};
}
\foreach \id/\xx/\yy/\state in \vertexdata {
    \def\vertexfill{gray!30}
    \ifdefstring{\state}{r}{\def\vertexfill{red!40}}{}
    \ifdefstring{\state}{g}{\def\vertexfill{green!40}}{}
    \ifdefstring{\state}{b}{\def\vertexfill{blue!40}}{}

    \node[vertex,fill=\vertexfill] (v\id)
        at (p\id) {\id};
}

\foreach \parent/\children in {
    0/{1,2,3,4,5},
    1/{6,7},
    2/{8,9},
    3/{10,11},
    4/{12,13},
    5/{14,15},
    6/{16,17,18,19},
    7/{20,21},
    8/{22,23,24,25},
    9/{26,27},
    10/{28,29,30,31},
    11/{32,33},
    12/{34,35,36,37},
    13/{38,39},
    14/{40,41,42,43},
    15/{44,45},
    16/{46,47,48,49},
    17/{50,51},
    18/{52,53,54,55},
    19/{56,57},
    20/{58,59},
    21/{60,61},
    22/{62,63,64},
    23/{65,66,67},
    24/{68,69,70},
    25/{71,72,73},
    26/{74,75},
    27/{76,77},
    28/{79,80,81},
    29/{82,83,84},
    30/{85,86,87},
    31/{88,89,90},
    32/{91,92},
    33/{93,94},
    34/{95,96,97},
    35/{98,99,100},
    36/{101,102,103},
    37/{104,105,106,107},
    38/{108,109},
    39/{110,111},
    40/{112,113,114},
    41/{115,116,117},
    42/{118,119,120},
    43/{121,122},
    44/{123,124},
    45/{78,125},
    }{
        \foreach \child in \children {
            \draw (v\parent)--(v\child); 
            }
    }

\node[
    vertex,
    fill=gray!30,
    draw=orange,
    line width=1.8pt,
    minimum size=5.5mm
] at (p1) {1};
\end{scope}
\end{tikzpicture}
\caption{A Counterexample for $(\lambda, \mu)$-Strategy}
\label{F:game_20_august}
\end{figure}

\textbf{Round behavior:} In each round Alice chooses her moves in the following way:

\begin{enumerate}
    \item Select an uncolored vertex \(v\) with the smallest \(\lambda(v)\). If several vertices attain the minimum, choose one uniformly at random.

    \item Choose a color \(i\in E(v)\) that satisfies both of the following conditions:
    \begin{enumerate}
        \item \(\mu(u)\neq\mu_i(u)\) for every neighbor \(u\) of \(v\).
        \item If there is a neighbor \(u\) of \(v\) such that \(\mu(u)=\mu_i(u)\) for all \(i\in E(v)\), then \(i\) occurs on at most half of the already colored neighbors of \(v\) and does not make \(v\) majority colored after this move.
    \end{enumerate}

    If more than one color satisfies one of the above conditions, choose one that occurs least frequently among the already colored vertices at graph distance at most two from \(v\).
    If there are no colored vertices within distance two of \(v\), every eligible color has count zero. Choose one of these colors uniformly at random.

    If no color satisfies both conditions but \(E(v)\neq\varnothing\), choose an eligible color for \(v\) that occurs least frequently among the already colored vertices at graph distance at most two from \(v\).

    If \(E(v)=\varnothing\) for any uncolored vertex \(v\), the algorithm reports \textbf{Bob Wins} and terminates.
\end{enumerate}

Unfortunately, this strategy does not guarantee a win for Alice.
Figure~\ref{F:game_20_august} presents a counterexample, and Table~\ref{tab:vertex-coloring} lists the corresponding moves made by Alice and Bob.
Interestingly, no random tie-breaking among vertices with minimum \(\lambda\) was needed in this example.
The sequence of moves suggests that Bob can repeat the same pattern of play across different gadgets of the tree to create a problematic vertex. Moreover, many of Alice's moves indirectly contribute to Bob's victory.

\begin{table}[ht]
\centering
\begin{minipage}[t]{0.48\textwidth}
\centering
\begin{tabular}{|c|c|c|c|}
\hline
Move & Player & Vertex & Color \\ \hline
1  & Alice & 37  & Red (r)   \\ \hline
2  & Bob   & 101 & Green (g) \\ \hline
3  & Alice & 36  & Blue (b)  \\ \hline
4  & Bob   & 99  & Green (g) \\ \hline
5  & Alice & 35  & Red (r)   \\ \hline
6  & Bob   & 12  & Red (r)   \\ \hline
7  & Alice & 34  & Green (g) \\ \hline
8  & Bob   & 87  & Green (g) \\ \hline
9  & Alice & 30  & Red (r)   \\ \hline
10 & Bob   & 83  & Red (r)   \\ \hline
11 & Alice & 29  & Blue (b)  \\ \hline
12 & Bob   & 79  & Green (g) \\ \hline
13 & Alice & 28  & Red (r)   \\ \hline
14 & Bob   & 10  & Red (r)   \\ \hline
15 & Alice & 31  & Blue (b)  \\ \hline
16 & Bob   & 71  & Green (g) \\ \hline
17 & Alice & 25  & Red (r)   \\ \hline
18 & Bob   & 68  & Green (g) \\ \hline
19 & Alice & 24  & Blue (b)  \\ \hline
20 & Bob   & 66  & Green (g) \\ \hline
21 & Alice & 23  & Red (r)   \\ \hline
22 & Bob   & 8   & Red (r)   \\ \hline
23 & Alice & 22  & Green (g) \\ \hline
24 & Bob   & 46  & Green (g) \\ \hline
25 & Alice & 16  & Red (r)   \\ \hline
26 & Bob   & 50  & Green (g) \\ \hline
27 & Alice & 17  & Blue (b)  \\ \hline
28 & Bob   & 53  & Green (g) \\ \hline
29 & Alice & 18  & Red (r)   \\ \hline
\end{tabular}
\end{minipage}\hfill
\begin{minipage}[t]{0.48\textwidth}
\centering
\begin{tabular}{|c|c|c|c|}
\hline
Move & Player & Vertex & Color \\ \hline
30 & Bob   & 6   & Red (r)   \\ \hline
31 & Alice & 19  & Green (g) \\ \hline
32 & Bob   & 122 & Green (g) \\ \hline
33 & Alice & 43  & Red (r)   \\ \hline
34 & Bob   & 117 & Green (g) \\ \hline
35 & Alice & 42  & Blue (b)  \\ \hline
36 & Bob   & 116 & Green (g) \\ \hline
37 & Alice & 41  & Red (r)   \\ \hline
38 & Bob   & 14  & Red (r)   \\ \hline
39 & Alice & 40  & Green (g) \\ \hline
40 & Bob   & 77  & Red (r)   \\ \hline
41 & Alice & 27  & Green (g) \\ \hline
42 & Bob   & 61  & Red (r)   \\ \hline
43 & Alice & 21  & Green (g) \\ \hline
44 & Bob   & 125 & Red (r)   \\ \hline
45 & Alice & 45  & Green (g) \\ \hline
46 & Bob   & 93  & Red (r)   \\ \hline
47 & Alice & 33  & Green (g) \\ \hline
48 & Bob   & 91  & Red (r)   \\ \hline
49 & Alice & 32  & Blue (b)  \\ \hline
50 & Bob   & 109 & Red (r)   \\ \hline
51 & Alice & 39  & Green (g) \\ \hline
52 & Bob   & 13  & Green (g) \\ \hline
53 & Alice & 4   & Blue (b)  \\ \hline
54 & Bob   & 0   & Blue (b)  \\ \hline
55 & Alice & 5   & Green (g) \\ \hline
56 & Bob   & 3   & Blue (b)  \\ \hline
57 & Alice & 2   & Green (g) \\ \hline
58 & Bob   & 7   & Green (g) \\ \hline
\end{tabular}
\end{minipage}
\vspace{0.3cm}
\caption{Vertex coloring moves by Alice and Bob for the tree in Figure~\ref{F:game_20_august}.}
\label{tab:vertex-coloring_2}
\end{table}

\subsubsection{What do these parametric strategies tell us about the majority coloring game?}
In the first version, Alice chose to color the vertex that requires her attention most urgently, as measured by the number of moves it takes for it to be made critical.
In the second version, Alice chose to also measure the number of moves it takes for a vertex to become majority colored.
However, in either case, we were able to find gadgets to build a tree such that Alice's strategy fails to see that Bob is creating a problematic vertex in the future.

Perhaps the issue with these strategies is that Alice only focuses on the criticality of a vertex, and is allowing that to inform her decisions: one could consider defining a third parameter which measures the ``impact'' of coloring a vertex on its neighbors, and a strategy that also prioritizes minimizing the damage done by each move.
Despite the increase in complexity, we suspect that similar gadgets can be concocted which defeat such a strategy as well.
In general, we suspect that any strategy of Alice which ``sees'' only within a bounded radius of each vertex to determine her next move will be defeated by Bob on some sufficiently large tree.
In the concluding remarks, we propose a concrete (and simpler) version of this problem to better understand this behavior of the majority coloring game.

\subsection{Majority game chromatic number of infinite acyclic graphs}
\label{S:Infinite_Acyclic}

Recall that the \(d\)-relaxed coloring game is similar to the majority coloring game, the only difference being that the defect \(d\) at a vertex is defined globally whereas in majority coloring it depends on the degree of each vertex.

Now consider an infinite acyclic graph \(G\) with \(4 \le \delta(G) \le \Delta(G) \le 5\).
Note that \(\half{4}=\half{5}=2\) and hence the majority condition for such a graph translates to at most 2 neighbors of any given vertex having the same color as that vertex.
Consequently the majority coloring game on such a graph \(G\) is equivalent to the \(2\)-relaxed coloring game on \(G\).
Hence for such a graph, \(\mu_g(G)=\chi_g^2(G)\).
He--Wu--Zhu (cf.~\cite[Section 2]{HeWuZhu2004} proved that if \(F\) is a (finite) forest, and \(d\ge2\) then \(\chi^d_g(F)\le 2\).
Their strategy works by cutting the forest into trunks whose endpoints are the colored vertices and making an appropriate choice of trunk vertex for Alice to color.
It is not difficult to see that their strategy extends to infinite locally finite acyclic graphs, since at each partial stage of the game there are vertices only up to a finite depth which have been colored.
Thus for the graph \(G\) defined above, \(\mu_g(G)\le 2\); i.e., Alice has a winning strategy with two colors: just use the relaxed coloring game strategy.

This illustrates the key difference between the relaxed coloring game and the majority coloring game: when the defect is global, leaves play no significant role.
So \(\chi_g^d(F) \leq 2\) for \(d \geq 2\) for any locally finite forest, whether finite or infinite.
However, in the majority coloring game, when a leaf is colored it is automatically majority colored, so its neighbor cannot receive the same color as itself.
Thus, any winning strategy for Bob on a finite tree must crucially exploit the presence of leaves.
Concretely, we show below that there exist forests \(F\) in which all internal vertices have degree \(4\) or \(5\), but \(\mu_g(F) = 3\).
Such an example which is \(4\)-internal-regular is shown in Figure~\ref{F:degree_4}.

\begin{figure}[h]
\centering
    \begin{tikzpicture}
        [level distance=6mm,
        font=\scriptsize,
        every node/.style={draw,circle, inner sep=2pt},
        level 1/.style={sibling distance=30mm},
        level 2/.style={sibling distance=40mm},
        level 3/.style={sibling distance=20mm},
        level 4/.style={sibling distance=6mm},
        line width=0.9pt,
        smooth]

        \node {...}
        child{node [fill=red!30, inner sep=1pt] {12}
            child{node [inner sep=1pt]{\(v_c\)}
                child{node [fill=cyan!30] {3}
                    child{node [fill=red!30] {2}}
                    child{node {}}
                    child{node {}}
                    }
                child{node [fill=cyan!30]{5}
                    child{node [fill=red!30]{4}}
                    child{node {}}
                    child{node {}}}
                child{node [fill=cyan!30]{7}
                    child{node [fill=red!30]{6}}
                    child{node {}}
                    child{node {}}}
                }
            child{node [fill=red!30]{9}
                child{node [xshift=10mm, fill=cyan!30]{8}}
                child{node {}}
                child{node [xshift=-10mm]{}}}
            child{node [fill=red!30, inner sep=1pt]{11}
                child{node [xshift=10mm, fill=cyan!30, inner sep=1pt]{10}}
                child{node {}}
                child{node [xshift=-10mm]{}}}
        }
        child{node {...}};
    \end{tikzpicture}
    \caption{Alice loses using 2 colors on a 4 internal regular tree}
    \label{F:degree_4}
\end{figure}

\emph{Description of the game in Figure~\ref{F:degree_4}}: The figure depicts a partially colored section of a tree with all internal nodes having degree 4.
Alice's and Bob's moves are denoted by odd and even numbers respectively.
Without loss of generality, we can assume that multiple such gadgets are present and Alice's first move was somewhere else in the tree.
Note that \(2,4,6,8 \textnormal{ and } 10\) are leaves and when Bob colors them, their color is forbidden for their parents (\(3,5,7,9 \textnormal{ and } 11\) respectively), thereby forcing Alice to color them.
If Alice does anything else, she loses immediately.
For instance, suppose Alice does not color 3 after Bob colors 2.
Then Bob, in the next move, colors another leaf sibling of 2 with blue and thus both red and blue become forbidden for 3 and Alice loses the game.
Hence Alice is forced to color \(3,5,7,9 \textnormal{ and } 11\).
Then Bob colors \(12\) with red.

Now consider \(v_c\).
Degree of \(v_c\) is 4 and hence, as per the majority condition, at most two neighbors of \(v_c\) can have the same color as \(v_c\).
Three neighbors of \(v_c\) are blue and thus blue is forbidden for \(v_c\).
Degree of vertex \(12\) is also 4 and two of its neighbors are colored same as itself.
Thus red is also forbidden for \(v_c\) because a red on \(v_c\) breaks the majority condition for \(12\).
This shows that Alice needs at least 3 colors to win the majority coloring game.

Now consider any infinite acyclic graph \(G\) for which \(2m \le \deg(v) \le 2m+1\) where \(m\ge 2\).
Since \(\half{2m}=m=\half{2m+1}\), any vertex can have at most \(m\) neighbors with the same color as itself in the majority coloring game.
Hence it reduces to an \(m\)-relaxed coloring game with \(m \ge 2\).
Since \(\chi^m_g(F) \le 2\) for \(m \ge 2\) for any forest \(F\), it implies that \(\mu_g(G)\le2\) for such graphs \(G\).
In other words, Alice has a winning strategy with two colors in the majority coloring game on such graphs.
Hence we make the following remark:
\begin{remark}\label{T:infinite_acyclic_graphs}
    Let \(G\) be an infinite acyclic graph with \(2m \le \deg(v) \le 2m+1 \hspace{0.2cm} \forall v \in V(G) \) where \(m\ge 2\).
    Then, \(\mu_g(G) \le 2\).
\end{remark}

However, the result does not extend to finite acyclic graphs with all internal vertices having degree \(2m\) or \(2m+1\) with a fixed \(m \ge 2\).
The same example shown in figure~\ref{F:degree_4} works for this case also.
Just change the degree of all internal nodes to \(2m\), and make Alice color \(m+1\) children of \(v_c\) blue and \(m\) children of \(12\) red.
At the same time, the presence of leaves does not guarantee that the game majority chromatic number will be larger than the relaxed game chromatic number for that acyclic graph: simply consider the case of a \(3\)-internal-regular tree \(T\). For such a tree of sufficiently large order, we have \(\mu_g(T) = 3 = \chi_g^1(T)\).

\section{\(2\)-caterpillars are game majority \(3\)-colorable}\label{S:Caterpillars}

\subsection{\(1\)-Caterpillars}
\label{S:1caterpillars}
We illustrate the main structural idea on \(1\)-caterpillars, i.e., when every leg of the caterpillar is a leaf, since the strategy on \(2\)-caterpillars is a minor modification to the strategy on \(1\)-caterpillars.
The key reduction here is that since all the legs of the caterpillar have small length, Alice can avoid configuration 2 completely, by the following strategy.

Let \(\mathrm{cat}(k_1,k_2,\dotsc,k_n)\) denote a \(1\)-caterpillar with \(n\) internal nodes having \(k_i\) legs of length \(1\) at the \(i\)th node.

Alice labels the internal nodes and orients the caterpillar such that \(k_1\) is the leftmost node.

In her first move, Alice colors \(k_1\) with any legal color.
For the subsequent moves, Alice colors as follows:
\begin{itemize}
    \item If Bob colors any leg, color the corresponding internal node with a legal color, minimally among its non leg neighbors.
    \item If Bob colors any \(k_i\), then color first uncolored internal node to the right, i.e. color a \(k_j\) such that \(i<j\) and \(k_n\) are colored for \(i<n<j\).
    If no such \(k_i\) is there, color the first uncolored internal node to the right, i.e. \(k_j\) with \(j<i\) such that all \(k_n\) are colored for \(j<n<i\).
    \item Whenever Alice colors any \(k_i\), she uses a color different from that of \(k_{i+1}\).
    If \(k_i+1\) is not colored, she picks a color different from that of \(k_{i-1}\).
    If both the non-leg neighbors are uncolored, she can pick any color.
    \item If there is no uncolored internal node, color any leg.
\end{itemize}

The crucial lemma is the following:

\begin{lemma}\label{L:crit_config_cat}
    For any caterpillar, Alice only needs to take care of \configuration{1}, \configuration{2} is redundant for caterpillars.
\end{lemma}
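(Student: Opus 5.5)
Under Alice's strategy of Section~\ref{S:1caterpillars}, I would show that \configuration{2} can never occur at an uncolored vertex of a \(1\)-caterpillar. The argument counts the neighbors of a potential critical vertex and checks which of them can be majority colored. Recall that \configuration{2} at \(v_c\) needs two majority colored neighbors of distinct colors, together with \(\half{\deg(v_c)}+1\) further neighbors sharing the third color.

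First I dispose of legs. A leg has degree \(1\), so at most one color is ever forbidden at it, and \configuration{2} cannot arise there.

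So let \(v_c=k_i\) be an internal node. Its neighbors are its \(k_i\) legs and at most two spine neighbors \(k_{i-1},k_{i+1}\). The key observation is that a leg of \(k_i\) can never be majority colored with a color that \(k_i\) could receive. If a leg \(\ell\) is colored while \(k_i\) is uncolored, \(\ell\) is ``majority colored'' (since \(\half{1}=0\)), and this only forbids its own color at \(k_i\). The third-color class in \configuration{2} consists of neighbors colored \(3\) that are not themselves the forbidding majority-colored ones. Hence \configuration{2} needs two majority colored neighbors in colors \(1,2\) and at least \(\half{\deg}+1\) neighbors in color \(3\). Colored legs of \(v_c\) with color \(3\) already forbid \(3\) by themselves, so the configuration effectively reduces to \configuration{1}: one leg of each of the three colors, or spine neighbors. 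I would make this precise as follows. If \(v_c\) has a colored leg of color \(c\), then \(c\) is forbidden. So if three colors are forbidden at \(v_c\) in the \configuration{2} pattern, replacing the ``\(m+1\) neighbors of color 3'' by any one leg of color 3 among them, or noting that at most two of those \(m+1\) are spine vertices, yields three forbidding neighbors of distinct colors. This works because \(m+1\ge 3\) when \(\deg\ge 4\), so at least one of the \(m+1\) is a leg. That is exactly \configuration{1}.

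The remaining cases are the small degrees \(\deg(k_i)\le 3\), where \(m+1\le 2\) and both color-\(3\) neighbors might be spine vertices. For an end node or a node with one leg, I would check directly that \(m+3\) colored neighbors exceed \(\deg\), so \configuration{2} is impossible. Indeed, \(\deg=3\) gives \(m+3=4>3\), and \(\deg=2\) gives \(4>2\).

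The step I expect to be the main obstacle is the case \(\deg(v_c)\ge 4\) in which all \(m+1\) color-\(3\) neighbors are spine vertices. This cannot happen since there are only two spine neighbors and \(m+1\ge 3\). I also need the majority colored neighbors of colors \(1,2\) to exist among the remaining vertices, which holds by assumption. So every instance of \configuration{2} contains an instance of \configuration{1}, and preventing \configuration{1} suffices. I would then note that the same counting applies verbatim to \(2\)-caterpillars, since legs of length \(2\) attached to \(v_c\) still have degree at most \(2\) at the neighbor.
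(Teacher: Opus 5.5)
Your argument is correct and matches the paper's proof. For $\deg(k_i)\le 3$, \configuration{2} would need $m+3>\deg(k_i)$ colored neighbors, so it cannot occur. For $\deg(k_i)\ge 4$, the $m+1\ge 3$ neighbors of the third color cannot all be spine vertices, so one is a leg; a colored leg is automatically majority colored, which yields \configuration{1}.

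You should drop your closing remark that this carries over verbatim to $2$-caterpillars, because it is false. A degree-$2$ leg vertex next to an uncolored spine vertex can never be majority colored, since its only other neighbor is a leaf. So \configuration{2} built from $m+1$ such vertices need not contain \configuration{1}, which is why the paper handles \configuration{2} separately, via Alice's strategy, in the $2$-caterpillar section. This remark lies outside the statement, so it does not affect your proof.
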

\begin{proof}
	Let \(m_i := \lfloor \deg(k_i)/2 \rfloor\).
    For any internal node \(k_i\), at most \(2\) of its neighbors are non-leg.
    If the degree of such a \(k_i\) is \(2\) or \(3\), then \(m_i=1\) and one color may be forbidden by giving the same color to \(k_{i-1}\) and \(k_{i+1}\).
    But then we have at most one leg neighbor left (when degree is \(3\)), which can be used to forbid only one color.
    Thus \configuration{2} shown in Figure~\ref{F:config2} cannot arise.
    
    If the degree of any \(k_i\) is more than \(3\), then \(m_i \ge 2\), and coloring \(m+1\) neighbors with the same color will inadvertently involve coloring leg neighbors.
    But a colored leg is itself majority colored, thus for such \(k_i\), \configuration{2} is redundant and reduces to \configuration{1}.

    Similarly, the result holds for \(k_1\) and \(k_n\) also and hence for all \(k_i\).
    Moreover, no critical configuration can arise on the legs.
\end{proof}

\begin{lemma}\label{L:maj_col_cat}
    Any \(k_i\) with \(1<i<n\), in a caterpillar \(C\), cannot be majority colored and have an uncolored non-leg neighbor at the same time if it has more than 1 legs.
\end{lemma}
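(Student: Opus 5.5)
The statement is purely structural and does not depend on Alice's strategy at all. The plan is to show that a majority colored \(k_i\) needs \(\lfloor \deg(k_i)/2\rfloor\) same-colored neighbors, that only its two spine neighbors can supply them, and that this number is at least \(2\), so both spine neighbors must already be colored.

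First, fix an internal node \(k_i\) with \(1<i<n\) and \(\ell \ge 2\) legs. Its neighbors are the \(\ell\) legs together with exactly two non-leg neighbors \(k_{i-1}\) and \(k_{i+1}\). Hence
\[
\deg(k_i)=\ell+2\ge 4, \qquad m_i=\lfloor \deg(k_i)/2\rfloor \ge 2 .
\]

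Second, no colored leg of \(k_i\) can share the color of \(k_i\). A leg \(x\) has \(\deg(x)=1\), so \(\lfloor \deg(x)/2\rfloor=0\). The majority condition at \(x\), which both players must maintain at every stage, therefore forbids its unique neighbor \(k_i\) from having the color of \(x\). This is the same observation used in Lemma~\ref{L:crit_config_cat}, where a colored leg is itself majority colored.

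It follows that every neighbor of \(k_i\) carrying the color of \(k_i\) lies in \(\{k_{i-1},k_{i+1}\}\). If \(k_i\) is majority colored, it has exactly \(m_i\ge 2\) such neighbors, so both \(k_{i-1}\) and \(k_{i+1}\) are colored, and with the same color as \(k_i\). This contradicts the assumption that some non-leg neighbor of \(k_i\) is uncolored, which proves the lemma.

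There is no real obstacle here; the only point needing care is the degree count. With exactly one leg we would have \(\deg(k_i)=3\) and \(m_i=1\), so a single colored spine neighbor would suffice to make \(k_i\) majority colored. This is exactly why the hypothesis of more than one leg, and the restriction \(1<i<n\) guaranteeing two spine neighbors, are needed. I would note this explicitly, since the subsequent analysis of Alice's strategy presumably treats the nodes with at most one leg and the end nodes \(k_1,k_n\) separately.
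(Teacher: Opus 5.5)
Your proof is correct and follows essentially the same argument as the paper. You use the same ingredients: two spine neighbors, $\deg(k_i)\ge 4$ so $m_i\ge 2$, and colored legs can never share $k_i$'s color, which forces both spine neighbors to be colored. The only difference is that the paper splits into the cases $m_i>2$ (never majority colored) and $m_i=2$, while your uniform argument (at least two same-colored neighbors among only two candidates) covers both at once.
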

\begin{proof}
    Consider any \(k_i\) with \(1<i<n\).
    Suppose it has more than 1 leg attached it.
    Since \(1<i<n\), \(k_i\) has two non-leg neighbors.
    Thus, \(\deg(k_i) \ge 4\), and thus \(m_i \ge 2\).
    If degree is greater than 5, i.e., if it has more than 3 legs, than \(k_i\) can never be majority colored, as \(m_i > 2\), but the only neighbors of \(k_i\) which can get the same color as \(k_i\), to make it majority colored, are its two non-leg neighbors.
    If degree is 4 or 5, then \(m_i=2\) and both the neighbors have to be colored but then \(k_i\) cannot have an uncolored non-leg neighbor while being majority colored.
    Hence the result.
\end{proof}

\begin{lemma}\label{L:one_col_leg}
    Following our strategy, Alice ensures that any uncolored \(k_i\) can have at most one colored leg.
\end{lemma}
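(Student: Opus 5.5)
The plan is an invariant argument by induction on the number of moves, using the first rule of Alice's strategy. I will show the statement holds after every one of Alice's moves. Since each player colors a single vertex per move, I will also check that between Bob's move and Alice's reply no uncolored internal node has two colored legs.

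The key observation is that a leg is a pendant vertex. Legs are colored in only two ways. First, Bob may color a leg. Second, Alice may color a leg, but only under the last rule, when no internal node is uncolored; in that case the statement is vacuous. So the only dangerous event is Bob coloring a leg $\ell$ of an uncolored $k_i$.

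Suppose, as the inductive hypothesis, that before Bob's move every uncolored $k_i$ has at most one colored leg. After Bob colors $\ell$, the node $k_i$ has at most two colored legs. By the first rule, Alice then immediately colors $k_i$. Every other uncolored internal node keeps its leg count, because each leg is adjacent to exactly one internal node. Hence after Alice's move, every uncolored internal node again has at most one colored leg. If $k_i$ was already colored, nothing changes.

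The base case is the empty coloring together with Alice's first move, coloring $k_1$, where no legs are colored.

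The one step that needs care is that Alice can actually color $k_i$ at that moment. The strategy says she colors it "with a legal color", so I must show one exists. By the hypothesis, $k_i$ had at most one colored leg before Bob's move, so it now has at most two. Each colored leg forbids one color at $k_i$: a colored leg is majority colored, so its color is forbidden at its parent.

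By Lemma~\ref{L:crit_config_cat}, \configuration{2} cannot occur, so the only other source of forbidden colors is majority-colored non-leg neighbors $k_{i\pm1}$. I would rule these out using Lemma~\ref{L:maj_col_cat}, together with a direct check for degree-$2$ and degree-$3$ nodes and for the endpoints $k_1,k_n$. I would also use Alice's rule of choosing a color different from an adjacent internal node, which prevents her from making $k_{i\pm1}$ majority colored through $k_i$.

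This gives at most two forbidden colors, so a third color remains. I expect this legality check, especially the small-degree and endpoint cases, to be the main obstacle; it may be deferred to the subsequent lemma the paper uses to finish the argument.
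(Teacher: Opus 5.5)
Your two key observations are the paper's: Alice answers every Bob move on a leg of an uncolored $k_i$ by coloring $k_i$ at once, and she colors legs herself only when no internal node is uncolored. The gap is that your induction, as set up, does not prove the statement at the moment it is needed. With the hypothesis ``at most one colored leg before Bob's move'' you explicitly allow $k_i$ to carry two colored legs right after Bob colors $\ell$. That contradicts your opening promise to check the intermediate state. It is also exactly where the lemma is used: Lemma~\ref{L:diff_col_from_right} and the final theorem invoke it when Alice must color $k_i$, to conclude that legs forbid only one color there.

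The repair is already contained in your step. Carry the stronger invariant that after each of Alice's moves \emph{no} uncolored internal node has any colored leg. It holds after her first move, and your step preserves it, since only the count at $k_i$ changes and $k_i$ then gets colored. Consequently, after any move of Bob, at most one uncolored internal node has exactly one colored leg.

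The legality detour should be dropped. It is not needed: if $k_i$ has no legal color, Bob has already won and the game stops, which does not affect the leg count; the paper's proof does not address it. Your sketch of it is also wrong, because majority-colored spine neighbors cannot be excluded locally. Lemma~\ref{L:maj_col_cat} only covers nodes with more than one leg. A $k_{i-1}$ of degree $2$ or $3$ colored like $k_{i-2}$ is majority colored while $k_i$ is still uncolored. With your two legs, one such neighbor already forbids the last color. Even with one leg, the configuration of Figure~\ref{F:sample_cat} is a position right after Bob colors a leg in which all three colors are forbidden at $k_i$. Excluding it requires the history argument of Lemma~\ref{L:diff_col_from_right} and the final theorem, which themselves rely on this lemma, so legality belongs there, not here.
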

\begin{proof}
    By our strategy, Alice colors any \(k_i\) immediately as soon as any leg associated to it is colored by Bob.
    Also, she herself does not color legs until all \(k_i\)'s are colored.
    Hence, any uncolored \(k_i\) cannot have more than one colored leg attached to it.
\end{proof}

\begin{lemma}\label{L:diff_col_from_right}
    Our strategy ensures that Alice can always color any \(k_i\) with a color different from that of \(k_{i+1}\).
\end{lemma}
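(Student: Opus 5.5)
At the moment Alice colors \(k_i\), at most two colors can be forbidden there, so she has at least one legal color different from that of \(k_{i+1}\). I will prove this by contradiction, counting which colors can be forbidden at \(k_i\).

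\textbf{Step 1: the three possible colors are the only ones to block.} Suppose Alice is about to color \(k_i\) and \(k_{i+1}\) is colored, say with color \(1\). The claim fails only if colors \(2\) and \(3\) are both illegal at \(k_i\). A color \(c\) is illegal at the uncolored vertex \(k_i\) in exactly two ways. Either some neighbor of \(k_i\) is majority colored with \(c\), or \(k_i\) already has \(m_i=\lfloor\deg(k_i)/2\rfloor\) neighbors colored \(c\), so that coloring \(k_i\) with \(c\) would break the majority condition at \(k_i\) itself.

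\textbf{Step 2: bound the colored neighbors of \(k_i\).} By Lemma~\ref{L:one_col_leg}, the uncolored \(k_i\) has at most one colored leg. Its other possible colored neighbors are \(k_{i-1}\) and \(k_{i+1}\). So \(k_i\) has at most three colored neighbors, and \(k_{i+1}\) carries color \(1\). This leaves at most two other colored neighbors (\(k_{i-1}\) and one leg) to forbid colors \(2\) and \(3\).

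\textbf{Step 3: the leg and \(k_{i-1}\) cannot block both.} Each of the two remaining neighbors would have to forbid a different color. The leg forbids its own color, since a colored leg is majority colored. The vertex \(k_{i-1}\) forbids its color only if it is majority colored while its non-leg neighbor \(k_i\) is still uncolored.
\begin{itemize}
\item If \(k_{i-1}\) has at least two legs and \(i-1>1\), Lemma~\ref{L:maj_col_cat} rules this out.
\item Otherwise \(\deg(k_{i-1})\le 3\) (or \(i-1=1\)), so \(m_{i-1}=1\). Then \(k_{i-1}\) is majority colored only via a same-colored leg or a same-colored \(k_{i-2}\). A same-colored leg is impossible, since a leg may not share its parent's color. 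For \(k_{i-2}\), I will use the rule that Alice colors \(k_{i-1}\) differently from its right neighbor. This excludes the cases where Alice colored \(k_{i-1}\) with \(k_i\) still uncolored; I need to check them against the ``color to the right'' order.
\end{itemize}
The self-saturation case (\(m_i\) neighbors of one color) needs \(m_i\le 1\), that is \(\deg(k_i)\le 3\). Then \(k_i\) has at most one leg. If \(\deg(k_i)=2\), only \(k_{i-1},k_{i+1}\) are neighbors, so at most one extra color can be blocked. If \(\deg(k_i)=3\), I count that a single extra colored neighbor can block at most one color by either mechanism. Either way only one of \(2,3\) is forbidden, giving the contradiction.

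\textbf{Main obstacle.} The hard part is the timing argument in Step 3. I must show that when Alice colors \(k_i\), the vertex \(k_{i-1}\) cannot already be majority colored by Bob coloring \(k_{i-2}\) and \(k_{i-1}\) the same. I will argue by induction on the move number, taking the first failure. Alice's rightward sweep colors the first uncolored node to the right immediately after Bob colors any \(k_j\), and her response to a leg is immediate. So a configuration where Bob makes a low-degree \(k_{i-1}\) majority colored towards an uncolored \(k_i\) is answered at once by Alice coloring \(k_i\). At that move only \(k_{i-1}\) (color \(a\)) and possibly one leg or \(k_{i+1}\) constrain her, so at least one color other than that of \(k_{i+1}\) remains. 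I expect this case analysis to be the delicate part, especially at the boundary indices \(i=1\) and \(i=n\).
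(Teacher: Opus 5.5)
Your outline follows the paper's route: take a first failure; note that \(k_i\) has at most one colored leg; the second blocked color must then come from a majority-colored \(k_{i-1}\), which forces \(k_{i-2}\) to share its color; finish with a who-colored-what argument through the rightward sweep. As written, however, several steps are wrong.

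Your second blocking mechanism in Step 1 is off by one. If \(k_i\) has exactly \(m_i\) neighbors colored \(c\), coloring \(k_i\) with \(c\) gives it \(m_i\le \deg(k_i)/2\) same-colored neighbors, which is legal. Blocking needs \(m_i+1\) such neighbors, as in the paper's configuration 2. With your threshold the degree-3 count actually fails. There \(k_i\) has two possible extra colored neighbors, \(k_{i-1}\) and a leg, not one. Then \(k_{i-1}\) colored \(2\) (not majority colored) together with a leg colored \(3\) would block both \(2\) and \(3\). With the correct threshold this mechanism is vacuous: \(m_i+1\ge 2\) neighbors of a color \(c\neq 1\) must include the single colored leg, which already blocks \(c\). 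Your opening sentence is also a non sequitur. ``At most two colors forbidden'' is compatible with the forbidden pair being exactly the two colors other than that of \(k_{i+1}\), and that is the configuration the lemma must exclude.

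The real gap is in the decisive timing step. You claim that when Alice colors \(k_i\) as her sweep response, only \(k_{i-1}\) and `possibly one leg or \(k_{i+1}\)' constrain her, so a color other than that of \(k_{i+1}\) survives. This is false if a leg of \(k_i\) is colored: \(k_{i-1}\) majority colored \(a\), a leg colored \(b\) and \(k_{i+1}\) colored \(c\) leave only \(c\). The missing observation is that no leg of \(k_i\) is colored at that moment. Alice answers every leg move by coloring its spine vertex, and she colors no legs while spine vertices remain. Hence the failure can only occur right after Bob colors a leg of \(k_i\) while \(k_{i-1}\) is already majority colored. You must then rule out every way in which \(k_{i-2}\) and \(k_{i-1}\) received a common color while \(k_i\) stayed uncolored.

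You treat only Bob's moves there. The cases where Alice colored one of these vertices are left open, and you invoke the wrong rule for them:
\begin{itemize}
\item If Alice colors \(k_{i-1}\) while \(k_i\) is uncolored, the right-neighbor rule does not apply; she avoids the color of \(k_{i-2}\). This is always feasible, since only \(k_{i-2}\) and at most one leg can block colors at \(k_{i-1}\).
\item If Alice colors \(k_{i-2}\) after \(k_{i-1}\), she avoids the color of \(k_{i-1}\). The feasibility of this is exactly where the first-failure hypothesis enters, and it is the step the paper's proof rests on.
\end{itemize}
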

\begin{proof}
    Suppose for contradiction that the result is not true and there is some caterpillar such that Alice has to color an internal node \(k_i\) but the only valid color is that of \(k_{i+1}\).
    Let this be the first instance when such a situation arises.
    
    Now this requires that the other 2 colors are forbidden for \(k_i\).
    By lemma~\ref{L:one_col_leg}, \(k_i\) cannot have two colored legs.
    Thus only one color can be forbidden at leg.
    The other color can be forbidden only if \(k_{i-1}\) is majority colored.
    
\begin{figure}[h!]   
    \centering
    \begin{tikzpicture}[
            every node/.style={circle, draw, fill=white, inner sep=1pt},
            line width=0.9pt,
            smooth
        ]

        \node (0) at (0,0) {0};
        \node (1) at (1,0) {1};
        \node (2) at (2,0) [fill=red!40] {2};
        \node (3) at (3,0) [fill=red!40] {3};
        \node (4) at (4,0) {4};
        \node (7) at (4,-1) [fill=green!40] {7};
        \node (5) at (5,0) [fill=cyan!40] {5};
        \node (6) at (6,0) {6};
        
        \draw (0)--(1)--(2)--(3)--(4)--(5)--(6);
        \draw (4)--(7);
    \end{tikzpicture}
    \caption{Sample Caterpillar (It may have more legs.)}
    \label{F:diff_col_from_right}
\end{figure}

    Thus the configurations looks like the partially colored section of caterpillar shown in figure~\ref{F:diff_col_from_right}.
    Note that by lemma~\ref{L:maj_col_cat}, to be majority colored, the vertex 3 can have at most one leg.

    Alice does not color 7.
    If Bob colors it before 2 and 3 are colored, then Alice will color 4 and we are done.
    So Bob has to color 7 after 2, 3 and 4 are colored.

    Consider 3.
    If Bob colored 3, then Alice would have colored 4 and since 7 was uncolored, she had a legal color different from both 3 and 5.
    Thus 3 should have been colored by Alice.
    Now consider 2.
    Again, if Bob colored 2, Alice would have colored 4 by strategy, using green, which is a color different from 5 and is available as 7 is not colored yet.
    Thus 2 was also colored by Alice.

    But then Alice would have colored 2 with a color different from that of 3, which is its right neighbor and the configuration does not arise.
    This is possible because 4 was the first instance when she could not colored a vertex differently from the right neighbor.

    Hence our assumption that such a configuration arises in some caterpillar was false and the result holds.
    Thus, Alice can always color a vertex differently from its right non-leg neighbor.
\end{proof}

Using the above lemmas, one can argue that any critical configuration that does arise at a vertex \(k_i\) by Alice's strategy must have arisen as a consequence of another critical configuration to its left (or right).
By infinite descent, no critical configuration could have arisen on the caterpillar at all.

\begin{theorem}
    For any caterpillar \(C\), \(\mu_g(C)\le 3\).
\end{theorem}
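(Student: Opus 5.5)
We argue by contradiction: suppose that Alice follows the strategy above on \(C=\mathrm{cat}(k_1,\dots,k_n)\) and that at some point there is an uncolored vertex with no legal color. Consider the first such moment. Legs are never critical, since a leg has degree \(1\) and only its spine neighbor's color can be forbidden at it. So the critical vertex is some internal node \(k_i\). By Lemma~\ref{L:crit_config_cat}, the only configuration to rule out at \(k_i\) is \configuration{1}: three majority colored neighbors of \(k_i\), one of each color. By Lemma~\ref{L:one_col_leg}, an uncolored \(k_i\) has at most one colored leg. Hence at least two of these three neighbors are spine neighbors, namely \(k_{i-1}\) and \(k_{i+1}\), both majority colored with different colors. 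The third neighbor is a leg that Bob colored.

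\textbf{Step 1: the leg was colored last.} Once Bob colors a leg of an uncolored \(k_i\), Alice colors \(k_i\) on her very next move. So the configuration can only be completed by Bob's leg move, which must come after \(k_{i-1}\) and \(k_{i+1}\) are both majority colored. Before that move, \(k_i\) has only two forbidden colors, so Alice would have had the opportunity to color it. We therefore need to explain why Alice left \(k_i\) uncolored while both of its spine neighbors were majority colored.

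\textbf{Step 2: structure of the neighbors.} By Lemma~\ref{L:maj_col_cat}, if \(k_{i\pm1}\) (with \(1<i\pm1<n\)) has at least two legs, it cannot be majority colored while \(k_i\) is still uncolored. So each of \(k_{i-1}\) and \(k_{i+1}\) has at most one leg, and its \(m=1\). For such a vertex to be majority colored, exactly one neighbor shares its color. Since \(k_i\) is uncolored, that neighbor must be \(k_{i-2}\) (respectively \(k_{i+2}\)), or a leg. A leg is impossible, because a leg cannot share its parent's color. The endpoints \(k_1,k_n\) need a separate short check, handled the same way.

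\textbf{Step 3: contradiction with Lemma~\ref{L:diff_col_from_right}.} Consider \(k_{i-1}\), which has the same color as \(k_{i-2}\).
\begin{itemize}
\item If Alice colored \(k_{i-2}\) after \(k_{i-1}\), Lemma~\ref{L:diff_col_from_right} says she would have chosen a color different from \(k_{i-1}\), a contradiction.
\item If Alice colored \(k_{i-1}\) after \(k_{i-2}\), the left-to-right sweep of her strategy, together with the third rule, gives the same contradiction.
\end{itemize}
So Bob colored whichever of the two came second. But then Alice's response is to color the next uncolored internal node in the sweep direction, which is \(k_i\) or a node on the way to it. This is a case analysis on the order of moves, as in the proof of Lemma~\ref{L:diff_col_from_right}.

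Running the same argument on the right side, \(k_{i+1}\) equal in color to \(k_{i+2}\), forces a monochromatic pair further out. Repeating it yields an infinite descent along the spine, which is impossible in a finite caterpillar. Hence no critical vertex ever appears, and \(\mu_g(C)\le 3\).

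\textbf{Main obstacle.} The hardest part is Step 3. The rightward-sweep rule makes the left and right sides asymmetric: Lemma~\ref{L:diff_col_from_right} only guarantees a color different from the right neighbor. So the right side needs the fallback clause, that Alice avoids the color of \(k_{i-1}\) when \(k_{i+1}\) is uncolored. We also have to check carefully that Bob cannot interleave moves on both sides so as to exhaust Alice's single response per turn. I would first try to prove a strengthened invariant: \emph{no two consecutive spine vertices colored by Alice share a color, and every monochromatic adjacent spine pair contains a Bob move immediately followed by an Alice move at the adjacent uncolored node}. The contradiction would then follow from that invariant.
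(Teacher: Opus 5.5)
\textbf{The gap is in how you close the argument.} In Step~3 you stop at ``\(k_i\) or a node on the way to it''. You then propose to finish by running the same argument on the right side and invoking infinite descent. That finish fails, because Alice's strategy always sweeps rightward, so the two sides are not symmetric.

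On a spine that extends far enough to the right, Bob can do the following:
\begin{enumerate}
\item Color \(k_{i+2}\) blue. Alice answers at \(k_{i+3}\), with a color other than blue.
\item Color \(k_{i+1}\) blue. Alice answers at the first uncolored node to the right of \(k_{i+3}\), not at \(k_i\).
\end{enumerate}
Now \(k_{i+1}\) is majority colored next to an uncolored \(k_i\). This gives no contradiction, and there is nothing ``further out'' to descend to.

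The strengthened invariant you suggest would not rescue this either, because its first half is false. When Alice colors \(k_j\) with both spine neighbors already colored, she only avoids the right neighbor's color. So she can be forced to repeat the color of an Alice-colored \(k_{j-1}\), e.g.\ when \(k_{j+1}\) and a freshly colored leg of \(k_j\) carry the other two colors.

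\textbf{What you are missing is that the left pair alone already gives the contradiction.} This is exactly how the paper argues: it examines only vertices \(2\) and \(3\) in Figure~\ref{F:sample_cat}.

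Suppose the second vertex of \(\{k_{i-2},k_{i-1}\}\) to be colored was colored by Bob. At that moment \(k_{i-1}\) is colored and \(k_i\) is not. So the first uncolored internal node to the right is exactly \(k_i\); there is no intermediate node. Alice colors \(k_i\) at once. By your Step~1 the leg of \(k_i\) is still uncolored then, so at most two colors are forbidden at \(k_i\) and she succeeds. This contradicts \(k_i\) being uncolored at the critical moment.

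Your two Alice cases cover the remaining orders of play:
\begin{itemize}
\item If \(k_{i-2}\) is colored second, Lemma~\ref{L:diff_col_from_right} gives it a color different from \(k_{i-1}\).
\item If \(k_{i-1}\) is colored second, the rule of avoiding the left neighbor's color applies, since \(k_i\) is uncolored. This is feasible because \(k_{i-1}\) has at most one leg, so some color differs from both \(k_{i-2}\) and that leg.
\end{itemize}
Together with the Bob case, this exhausts all orders of play. Hence \(k_{i-1}\) cannot be majority colored at the critical moment, \configuration{1} cannot occur at \(k_i\), and the right side never needs to be examined.
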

\begin{proof}
    Suppose the result is not true.
    Then there exists some caterpillar \(C\) which has an uncolored internal node for which no legal color is available.
    By lemma~\ref{L:crit_config_cat}, there has to be \configuration{1} on some internal node \(k_i\), i.e. there is some \(k_i\) such that it has 3 majority colored neighbors, all of different colors.

    By lemma~\ref{L:one_col_leg}, only one majority colored neighbor out of the required 3 can be a leg, the other two have to be \(k_{i-1}\) and \(k_{i+1}\).
    By lemma~\ref{L:maj_col_cat}, \(k_{i-1}\) and \(k_{i+1}\) can have at most one pendant.
    Further, for \(k_{i-1}\) and \(k_{i+1}\) to be majority colored, they need to have one more non-leg neighbor other than \(k_i\).

\begin{figure}[h]
    \centering
    \begin{tikzpicture}[
            every node/.style={circle, draw, fill=white, inner sep=1pt},
            line width=0.9pt,
            smooth
        ]

        \node (0) at (0,0) {0};
        \node (1) at (1,0) {1};
        \node (2) at (2,0) [fill=red!40] {2};
        \node (3) at (3,0) [fill=red!40] {3};
        \node (4) at (4,0) {4};
        \node (9) at (4,-1) [fill=green!40] {9};
        \node (5) at (5,0) [fill=cyan!40] {5};
        \node (6) at (6,0) [fill=cyan!40] {6};
        \node (7) at (7,0) {7};
        \node (8) at (8,0) {8};

        \draw (0)--(1)--(2)--(3)--(4)--(5)--(6)--(7)--(8);
        \draw (4)--(9);
    \end{tikzpicture}
    \caption{The only configuration on which Alice loses on a Caterpillar}
    \label{F:sample_cat}
\end{figure}

    Consider the caterpillar shown in figure~\ref{F:sample_cat}, no color is legal for node 4.
    Suppose this configuration arises in \move{n}.
    Bob should have colored 9 in the last move.
    If not, then this means 9 was colored by Bob before forbidding either red or blue, in both the cases, Alice following our strategy would have colored 4 instantly and the configuration would not have arisen. Thus 9 was the last vertex in this arrangement to get colored.

    Consider the vertex 3.
    If Bob colored 3, Alice would have colored 4 by our strategy.
    So Alice should have colored 3. This is possible by our strategy, if Bob colored the leg of 3 with a color different from red.
    When Alice colored 3, if 2 was already colored red, then Alice would have chosen the third color, different from red and the color of leg.
    This is possible because 4 is uncolored.
    But then the configuration itself does not arise.
    Thus, 3 is colored red before 2.

    Now consider 2.
    If Bob colors 2 with red, Alice following our strategy will color 4 and since 9 is not colored yet, she has at least one legal color, green in this case and the configuration does not arise.
    Thus, for the configuration to arise, we require Alice to color 2 red.
    But by lemma~\ref{L:diff_col_from_right}, she can color 2 with a color different from red.
    Thus the configuration never arises.
    
\end{proof}

\subsection{\(2\)-Caterpillars}
\label{S:Modified_Caterpillars}
Now, we consider Caterpillars which have leg lengths upto two.
Before starting, we make some important observations.
If a leg has length 2, then the non pendant vertex on that leg has degree two and at most two colors can be forbidden there.
A legal color is always available there.
Likewise, a legal color is always available on the pendant vertices.
Thus Alice only needs to make sure that Bob is not able to forbid 3 colors at any vertex on the spine.
Also, a non pendant vertex on a leg cannot be majority colored if its neighbor on the spine is uncolored.

Now consider any critical configuration on a vertex \(x\) on the spine.
For \configuration{1}, it should have three majority colored neighbors, which is possible if its both neighbors on the spine are majority colored with 2 different colors and it has a pendant vertex attached to it which is colored with the 3rd color.
For \configuration{2}, again by observation, both of its majority colored neighbors have to be its neighbors on the spine and the remaining \(m+1\) neighbors have to be non pendant neighbors on its legs.
This requires \(x\) to have at least \(m+1\) two length legs.
Here, \(m=\half{deg(x)}\).

\begin{theorem}
    If \(C'\) is a caterpillar with leg length upto two, then \(\mu_g(C')\le 3\).
    In other words, Alice has a winning strategy with 3 colors on caterpillars which have leg length upto two.
\end{theorem}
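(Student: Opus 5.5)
I will prove the theorem by extending Alice's $1$-caterpillar strategy with one extra rule for length-two legs. Write $k_1,\dots,k_n$ for the spine. For a length-two leg $k_i\,a\,b$, with $a$ of degree $2$ and $b$ pendant, call $a$ a \emph{middle vertex}.

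Alice's strategy is as follows.
\begin{enumerate}
\item She starts at $k_1$.
\item Whenever Bob colors a leg vertex (a pendant leg of $k_i$, a middle vertex $a$ of $k_i$, or the pendant $b$ below such an $a$) and $k_i$ is still uncolored, she immediately colors $k_i$. She uses a legal color that avoids the color of $k_{i+1}$ when possible, and otherwise is least used among the non-leg neighbours of $k_i$.
\item If Bob colors a pendant $b$ whose spine ancestor $k_i$ is already colored, she colors the middle vertex $a$ with a color different from both $k_i$ and $b$. This is always possible, since $a$ has degree $2$ and at most two colors are excluded.
\item The spine rules (move to the nearest uncolored spine vertex, avoid the color of the right spine neighbour) are unchanged.
\item Alice never colors leg vertices while some spine vertex is uncolored. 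Afterwards she colors each middle vertex with a color different from both of its neighbours whenever possible.
\end{enumerate}

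\textbf{Step 1: only spine vertices can become critical.} By the observations preceding the theorem, only spine vertices can be critical. It therefore suffices to rule out the two configurations on a spine vertex $x=k_i$ that were described there:
\begin{itemize}
\item \configuration{1}: $k_{i-1}$ and $k_{i+1}$ are majority colored with two different colors, and a pendant leg of $k_i$ carries the third color.
\item \configuration{2}: $k_{i\pm1}$ are majority colored, and $m+1$ middle vertices of $k_i$ carry the third color.
\end{itemize}

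\textbf{Step 2: an uncolored spine vertex has at most one colored leg vertex.} This is the analogue of Lemma~\ref{L:one_col_leg}. Alice never colors leg vertices while an uncolored spine vertex exists, and she answers Bob's first leg move at $k_i$ by coloring $k_i$. Hence an uncolored $k_i$ has at most one colored vertex among its legs and their pendants. For $m\ge1$ this makes \configuration{2} impossible, because it needs $m+1\ge 2$ colored middle vertices under an uncolored $k_i$.

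\textbf{Step 3: \configuration{1} does not arise.} Once \configuration{2} is excluded, only \configuration{1} at $k_i$ remains, exactly as in the $1$-caterpillar proof.
\begin{itemize}
\item I will re-establish Lemma~\ref{L:maj_col_cat} in the new setting. A middle vertex can match $k_j$ in color, but Alice's rule 3 prevents this whenever she colors it. If Bob colors $a$ with the color of $k_j$ after $k_j$ is colored, this only increases the same-colored count at $k_j$. Such a move can make $k_j$ majority colored, and I must handle it.
\item The key point is that Bob coloring a middle vertex of $k_{i-1}$ is itself a leg move, and Alice answers it by coloring the relevant uncolored spine vertex. The infinite-descent argument of Lemma~\ref{L:diff_col_from_right} and of the $1$-caterpillar theorem should then go through: who colored $k_{i-1}$, who colored its majority-making neighbours, and why Alice did not color $k_i$ earlier.
\item I expect Alice's rule to be refined in one place. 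When Bob makes a spine vertex $k_j$ majority colored by coloring a middle vertex, Alice should color the uncolored spine neighbour of $k_j$, in the spirit of the tree strategy of Theorem~\ref{T:FSTTCS}(\ref{T:FSTTCSa}). With this refinement, a majority colored spine vertex never has an uncolored spine neighbour unless Alice responds on her next move.
\end{itemize}

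\textbf{Main obstacle.} The main obstacle is the last point above. Length-two legs give Bob new ways to make a spine vertex $k_{i-1}$ majority colored, by coloring its middle vertices with its color, while $k_i$ is still uncolored. Bob could try to do this on both sides of $k_i$ and then finish with a pendant of $k_i$. My plan is to show that each such Bob move triggers Alice coloring $k_i$, and that she has a safe color available at that moment. At most two colors are forbidden then: one by $k_{i-1}$, and one by either a leg or $k_{i+1}$ but not both. This needs the "first counterexample" minimality argument from Lemma~\ref{L:diff_col_from_right}, and I will check it case by case against the figure configurations.
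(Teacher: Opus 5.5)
Your Steps~1 and~2 match the paper's argument. Only spine vertices can be critical, and Alice answers Bob's first move on any leg of an uncolored spine vertex $x$ by coloring $x$, so \configuration{2} cannot arise (it needs $m+1\ge 2$ colored middle vertices below an uncolored $x$). For \configuration{1}, the paper only says that the situation reduces to Figure~\ref{F:sample_cat} and that the $1$-caterpillar argument applies verbatim. You are right that this is not automatic. Lemma~\ref{L:maj_col_cat}, and the descent built on it in Lemma~\ref{L:diff_col_from_right} and the $1$-caterpillar theorem, use that a spine vertex can be majority colored only through its spine neighbors. That fails once a degree-$2$ middle vertex may carry the color of its spine vertex. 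But your Step~3 does not close this hole. It is a plan (``should then go through'', ``I expect'', ``my plan is to show''). The claim it rests on is that when Alice colors $x$ at most two colors are forbidden, one by $k_{i-1}$ and one by a leg or by $k_{i+1}$ but not both. That is exactly the statement that needs proof, so there is a genuine gap at the central step.

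In fact your rules, as written, allow Alice to lose.
\begin{itemize}
\item Take consecutive spine vertices $k_{j-2},\dots,k_{j+2}$ far from $k_1$. Let $k_{j-2},k_j,k_{j+2}$ have degree $3$ with one length-two leg, and let $k_{j\pm1}$ have degree $3$ with one pendant leaf.
\item Bob colors the middle vertex of $k_{j-2}$ with $1$. Rule~2 breaks ties only over non-leg neighbors, so it allows Alice to color $k_{j-2}$ with $1$, which makes it majority colored. In the same way $k_{j+2}$ ends up majority colored with $2$.
\item Bob colors the pendant below $k_j$ with $1$, Alice colors $k_j$ with, say, $3$, and Bob colors the middle vertex of $k_j$ with $3$.
\item Now $k_{j-1}$ has forbidden set $\{1,3\}$ and $k_{j+1}$ has $\{2,3\}$. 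Your refinement lets Alice color only one of them, and Bob colors the pendant leaf of the other with its last color.
\end{itemize}

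Repairing that tie-break alone is not enough, for three reasons.
\begin{itemize}
\item Bob can trigger $k_{j\pm2}$ through their pendants and then color their middle vertices with the spine color. Your refinement does not say which spine neighbor Alice colors when both are uncolored.
\item Spine activations, where Bob colors $k_{j+1}$ with the color of $k_j$, are not covered at all, since the unchanged spine rule sends Alice to the right of $k_{j+1}$.
\item Rules~3 and~5 contradict each other.
\end{itemize}

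What is missing has two parts. First, a fully specified rule for Alice's choice of vertex and color; for instance, when Bob triggers through $b$, give $k_i$ the color of $b$, so that $a$ can never match $k_i$. Second, an invariant, checked against every type of Bob move, that no single Bob move can leave two uncolored spine vertices each having two majority-colored spine neighbors of distinct colors.
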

\begin{proof}
    The basic strategy for caterpillars with leg length upto two remains the same as the strategy for caterpillars proposed earlier.
    The only additional thing is that Alice does not color the 2 degree of vertices on any 2 length legs unless all the vertices on the spine are colored and if Bob colors any such 2 degree vertices, she follows the same approach for coloring as she uses when Bob colors any pendant vertices on the legs.

    Now we show that Alice wins using this strategy with three colors.
    As observed earlier, it is sufficient to show that a critical vertex on spine does not appear.

    Consider any critical configuration on the spine at a vertex \(x\).
    Note that by strategy, Alice never colors any vertex on any leg unless all the vertices on the spine are colored.

    In case of \configuration{1}, this means that the leaf neighbor attached to \(x\) has to be colored by Bob.
    If he colors it before the two spine neighbors of \(x\) become colored, then Alice would have colored \(x\) and she had a legal color.
    Thus it was colored after the the two spine neighbors of \(x\) became majority colored.
    Thus this case reduces to the case shown in Figure~\ref{F:sample_cat} and the arguments follow verbatim as in the proof for caterpillars.

    In case of \configuration{2}, Alice would not have colored anything on the legs attached to \(x\).
    As soon as Bob colored the first of those \(m+1\) vertices, Alice would have colored \(x\) by strategy and thus this configuration also cannot arise.

    Hence Alice has a winning strategy with 3 colors on caterpillars which have leg lengths upto two, i.e. \(\mu_g(C')\le 3\).
\end{proof}

\section{Strong majority colorings of cycles}
\label{S:Strong_majority_coloring}

We initiate the study of the strong majority coloring game by giving a complete characterization of the strong majority game chromatic number for cycles.
Recall that the strong majority vertex coloring was introduced by Kalinowski--Kamyczura--Pil{\'s}niak--Wo{\'z}niak in~\cite{KalinowskiEtal2026}.
and in a strong majority coloring, at most half of the neighbors of any given vertex \(v\) can have a given color from the palette, regardless of the color of \(v\).

We consider the following two-person maker-breaker game variation of the aforesaid coloring.
Two players Alice and Bob play the \emph{strong majority coloring game} on a graph \(G\) with a palette of \(k\) colors.
At any point in game, they both have to respect the strong majority condition, i.e. any vertex, whether colored or uncolored, should not have more than half of its neighbors colored using the same color.
Define the \emph{strong majority game chromatic number} as the minimum number of colors required by Alice to win the \emph{strong majority coloring game}.
For a graph \(G\), denote by \(\Maj_g(G)\) its strong majority game chromatic number.

Now we give the complete characterization of the strong majority coloring game for cycles.
For the following discussion, by \(C_n\) we mean a cycle of length \(n\).
We label the vertices of the cycle with \(1 \textnormal{ to } n\) sequentially, starting with \(1\) on the vertex colored by Alice in the first move.

Before starting, we make a key observation which is used repeatedly in the proof:
\begin{claim}
    Both the neighbors of a given vertex in a cycle can not have the same color in a strong majority coloring.
    In other words, in a cycle, vertices separated by distance two necessarily have different colors in a strong majority coloring. 
\end{claim}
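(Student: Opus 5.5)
This claim follows directly from unpacking the definition of a strong majority coloring at a single vertex of degree two. Fix a vertex \(v\) of the cycle \(C_n\) with \(n \geq 3\), and let \(u\) and \(w\) be its two neighbors. Since \(n \geq 3\), \(u \neq w\), so \(N(v) = \{u,w\}\) and \(\deg(v) = 2\). By definition, every monochromatic subset of \(N(v)\) has size at most \(\deg(v)/2 = 1\). If \(u\) and \(w\) received the same color, then \(\{u,w\}\) would be a monochromatic subset of \(N(v)\) of size \(2 > 1\), which violates the strong majority condition at \(v\). Hence \(u\) and \(w\) must receive different colors.

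The condition constrains the colors on \(N(v)\) only and places no restriction on the color of \(v\) itself. Therefore the argument applies equally when \(v\) is uncolored, which matters in the game, where the condition must hold at every partial stage and for colored and uncolored vertices alike.

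For the reformulation, any two vertices at distance exactly two in \(C_n\) have a common neighbor \(v\), and they are precisely the two neighbors of that \(v\). For \(n \geq 5\) this common neighbor is unique, though uniqueness is not needed. For \(n = 4\), two vertices at distance two have two common neighbors, and either one serves. For \(n = 3\) no two vertices are at distance two, so that statement is vacuous while the first statement still holds. In all cases the first part then gives distinct colors.

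There is no real obstacle here. The only point needing care is that \(u \neq w\), which holds because \(n \geq 3\).
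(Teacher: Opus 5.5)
Your proof is correct and takes the same approach as the paper: each vertex of a cycle has degree $2$, so the strong majority condition allows at most one neighbor of any given color, which forces the two neighbors to receive different colors. Your extra remarks are correct details that the paper leaves implicit: that $u \neq w$ because $n \geq 3$, that the condition does not depend on the color of $v$ itself, and how the distance-two reformulation works when $n = 3$ and $n = 4$.
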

\begin{proof}
    In a strong majority coloring, at most half of the neighbors of a given vertex can have any given same color.
    The degree of each vertex in a cycle is \(2\) and \(\half{2}=1\), i.e. at most one neighbor can have any given color.
    Hence the claim.
\end{proof}

\begin{figure}[h]
    \centering
    \begin{subfigure}[b]{0.45\textwidth}
    \centering
    \begin{tikzpicture}[
            every node/.style={circle, draw, fill=white, inner sep=2pt},
            line width=0.9pt,
            smooth
        ]

        \node (v0) at ( 180:1) [fill=red!30]{1};
        \node (v1) at (   0:2) [fill=cyan!30]{2};
        \node (v2) at (  60:1) [fill=green!30]{3};
        
        \draw (v0)--(v1)--(v2)--(v0);
    \end{tikzpicture}
    \caption{}
    \end{subfigure}
    \begin{subfigure}[b]{0.45\textwidth}
    \centering
    \begin{tikzpicture}[
            every node/.style={circle, draw, fill=white, inner sep=2pt},
            line width=0.9pt,
            smooth
        ]

        \node (v0) at (-90:1) [fill=red!30]{1};
        \node (v1) at (  0:1) [fill=red!30]{2};
        \node (v2) at ( 90:1) [fill=cyan!30]{3};
        \node (v3) at (180:1) [fill=cyan!30]{4};
        
        \draw (v0)--(v1)--(v2)--(v3)--(v0);
    \end{tikzpicture}
    \caption{}
    \end{subfigure}
    \caption{Strong majority coloring of \(C_3\) and \(C_4\)}
    \label{F:strong_mu_c3_c4}
\end{figure}

\begin{theorem}\label{T:strong_mu_cycles}
    For cycles \(C_n\) of length \(n\):
    \[
    \Maj_g(C_n)=\begin{cases}
        2 &n=4\\
        3 &n\ne 4
    \end{cases}
    \]
\end{theorem}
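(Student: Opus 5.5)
The plan is to turn the strong majority condition on \(C_n\) into a proper coloring condition on an auxiliary graph, and then read off both bounds from it.

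By the Claim above, a (partial) coloring of \(C_n\) satisfies the strong majority condition exactly when any two vertices at distance two on the cycle get different colors. Define the auxiliary graph \(H_n\) on \(V(C_n)\) with edges \(\{i,i+2\}\), indices taken modulo \(n\). Then a legal move in the strong majority game on \(C_n\) is precisely a legal move in the ordinary coloring game on \(H_n\). Its structure is as follows:
\begin{itemize}
\item for \(n\) odd, \(H_n\) is a cycle of length \(n\) (for \(n=3\) it is \(C_3\) itself);
\item for \(n\) even, \(H_n\) is the disjoint union of two cycles of length \(n/2\);
\item for \(n=4\), \(H_4\) degenerates to a perfect matching \(\{1,3\},\{2,4\}\).
\end{itemize}
The important point is that every vertex of \(H_n\) has degree at most \(2\).

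\textbf{Upper bounds.} With \(3\) colors, an uncolored vertex has at most two colored \(H_n\)-neighbors, so at most two colors are ever forbidden at it. Hence the game never gets stuck, Alice wins regardless of strategy, and \(\Maj_g(C_n)\le 3\) for all \(n\ge 3\). For \(n=4\), each vertex of \(H_4\) has exactly one \(H_4\)-neighbor, so with \(2\) colors at most one color is forbidden at any uncolored vertex. Thus \(\Maj_g(C_4)\le 2\). With a single color, two vertices at distance two would share it, which is illegal, so \(\Maj_g(C_4)=2\) (compare Figure~\ref{F:strong_mu_c3_c4}).

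\textbf{Lower bound for \(n\ne 4\).} Since Alice's win yields a strong majority coloring, \(\Maj_g(G)\ge \Maj(G)\). By Theorem~\ref{T:strong_maj_results}(\ref{T:strong_maj_results_cycles}), this gives \(\Maj_g(C_n)\ge 3\) whenever \(n\not\equiv 0 \pmod 4\).

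The remaining case, which I expect to be the only real content, is \(n\equiv 0\pmod 4\) with \(n\ge 8\). Here a static \(2\)-coloring exists, so Bob must win dynamically with \(2\) colors. Label so that Alice's first move colors vertex \(1\) with color \(a\). Bob replies by coloring vertex \(5\) with the other color \(b\).
\begin{itemize}
\item The move is legal: the \(H_n\)-neighbors of \(5\) are \(3\) and \(7\). Both are uncolored, and both differ from \(1\) because \(n\ge 8\).
\item Now vertex \(3\) is uncolored and its \(H_n\)-neighbors \(1\) and \(5\) carry both colors \(a\) and \(b\). Equivalently, in \(C_n\) the vertex \(4\) would have both neighbors colored alike whichever color \(3\) receives, and likewise vertex \(2\).
\end{itemize}
So no legal color remains for vertex \(3\), and Bob wins. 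Together with \(\Maj_g(C_n)\ge 2\), which holds trivially since one color fails, this gives \(\Maj_g(C_n)=3\) for all \(n\ne 4\). The same trick fails for \(n=4\) because there \(5\equiv 1\), consistent with the exceptional value.
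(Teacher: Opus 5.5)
Your proof is correct. The upper-bound half is the paper's argument in a different form: your \(H_n\) is the paper's Claim (the two neighbors of a vertex must receive different colors) turned into a graph, and \(\Delta(H_n)\le 2\) is the paper's ``at most two colors can be forbidden.'' Your perfect-matching description of \(H_4\) justifies the \(C_4\) case more cleanly than the paper's informal list of Bob's options.

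The lower bound is where you take a different route. The paper does not split by residue modulo \(4\). It uses the same Bob reply you use (vertex \(5\) in the other color, trapping vertex \(3\)) for every \(n\ge 5\). This reply is also legal for \(n=5\), and for \(n=6\), where vertex \(5\) is an \(H_6\)-neighbor of \(1\), so the other color is forced but available. The paper handles \(C_3\) directly, since \(H_3=C_3\) forces three distinct colors.

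You instead import the static value \(\Maj(C_n)=3\) for \(n\not\equiv 0\pmod 4\) through \(\Maj_g\ge\Maj\), and keep the game argument only for \(n\equiv 0\pmod 4\), \(n\ge 8\). Relying on the cited theorem is unnecessary, because your own trap already covers all \(n\ge 5\). What your split does buy is a clear identification of exactly which cycles have game value strictly above the static value, namely \(n\equiv 0\pmod 4\) with \(n\ge 8\). The paper's route, by contrast, is uniform and self-contained.
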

\begin{proof}
    We prove the theorem in different parts:
    \begin{claim}
         \(\Maj_g(C_3)=3\)
    \end{claim}
    As discussed in the claim preceding the theorem, both the neighbors of any give vertex cannot have same color in a strong majority coloring.
    In \(C_3\) any two vertices are neighbors of the remaining vertex.
    If we look at \(1\) and \(2\) as neighbors of \(3\) then they should have different color, say red and blue.
    Now \(3\) is a neighbor of both \(1\) and \(2\).
    So \(3\) cannot get blue as a neighbor of \(1\),i.e. \(2\), is already blue.
    Likewise \(3\) cannot get red as a neighbor of \(2\),i.e. \(1\), is already red.
    And thus Alice loses with two colors.
    In other words, she needs at least 3 colors to win the strong majority coloring game on \(C_3\).
    Since \(C_3\) has only 3 vertices, 3 colors are also sufficient and hence \(\Maj_g(C_3)=3\).

    \begin{claim}
        \(\Maj_g(C_4)=2\)
    \end{claim}
    Suppose Alice and Bob play the strong majority coloring game on \(C_4\) and Alice starts by coloring \(1\) with a color, say red, as shown in figure~\ref{F:strong_mu_c3_c4}.
    As before, \(1\) and \(3\) are neighbors of \(2\) and should have different colors.
    Hence Alice needs at least two colors to win the game.
    Also, \(2\) and \(4\) are neighbors of \(1\) and should have different colors.
    
    This means Bob has following legal moves available:
    \begin{itemize}
        \item Color \(3\) with another color, say blue.
        \item Color a neighbor of \(1\) with blue or red.
    \end{itemize}
    Clearly, no matter what Bob does, he is unable to forbid both colors at a vertex and hence Alice wins with two colors.
    Two colors are therefore required and sufficient and hence \(\Maj_g(C_4)= 2\).

    \begin{claim}
        Alice wins the strong majority coloring game on cycles with 3 colors, i.e. \(\Maj_g(C_n)\le3\).
    \end{claim}
    Consider a cycle \(C_n\) on which Alice and Bob play the strong majority coloring game with \(3\) colors.
    Suppose for contradiction that the theorem is not true.
    Then there is some vertex for which all 3 colors are forbidden.
    Now for any given vertex, a color \(c\) is forbidden if it has a neighbor \(x\) and half of the neighbors of \(x\) are colored \(c\).
    But in a cycle, the degree of all vertices is \(2\) and hence at most \(2\) colors can be forbidden for any given vertex.
    Thus, with a palette of 3 colors, Alice always has a legal color for a given vertex, no matter how she or Bob plays.
    Hence, for cycles, \(\Maj_g(C_n)\le 3\).

    \begin{claim}
        For cycles \(C_n\) with \(n\ge5\), Alice loses the strong majority coloring game with 2 colors, i.e. \(\Maj_g(C_n) \ge 3\) when \(n\ge5\).
    \end{claim}
    Consider a cycle \(C_n\) with length \(n\ge5\).
    Alice and Bob play the strong majority coloring game on \(C_n\) with Alice starting the game by coloring the vertex \(1\).
    Bob in his move colors the \(5^{th}\) vertex with the the other color.
    As discussed in the claim before theorem, vertices which are one vertex apart should have different colors.
    Now consider the \(3^{rd}\) vertex, it is exactly one vertex apart from both \(1\) and \(5\).
    The color of \(3\) should be different from that of \(1\) and \(5\) and as a result no legal color is left for \(3\) as \(1\) and \(5\) have different colors.
    Example of such a play are shown in figure~\ref{F:strong_mu_cycles_ge3}.
    Hence Bob has a winning strategy with \(2\) colors in the strong majority coloring game on cycles of length at least \(5\).
    In other words Alice needs at least 3 colors, i.e. \(\Maj_g(C_n)\ge 3\) when \(n\ge5\).

    Combining this claim with the previous claim, we get \(\Maj_g(C_n)=3\) where \(n\ge 5\). Using this with the previous two claims, we have our theorem:
    \[
    \Maj_g(C_n)=\begin{cases}
        2 &n=4\\
        3 &n\ne 4
    \end{cases}
    \]
\end{proof}

\begin{figure}[h]
    \centering
    \begin{subfigure}[b]{0.25\textwidth}
    \centering
    \begin{tikzpicture}[
            every node/.style={circle, draw, fill=white, inner sep=2pt},
            line width=0.9pt,
            smooth
        ]

        \node (v0) at (-144:1) [fill=red!30]{1};
        \node (v1) at ( -72:1) {2};
        \node (v2) at (   0:1) {3};
        \node (v3) at (  72:1) {4};
        \node (v4) at ( 144:1) [fill=cyan!30]{5};
        
        \draw (v0)--(v1)--(v2)--(v3)--(v4)--(v0);
    \end{tikzpicture}
    \caption{}
    \end{subfigure}
    \begin{subfigure}[b]{0.25\textwidth}
    \centering
    \begin{tikzpicture}[
            every node/.style={circle, draw, fill=white, inner sep=2pt},
            line width=0.9pt,
            smooth
        ]

        \node (v0) at (-120:1) [fill=red!30]{1};
        \node (v1) at ( -60:1) {2};
        \node (v2) at (   0:1) {3};
        \node (v3) at (  60:1) {4};
        \node (v4) at (  120:1) [fill=cyan!30]{5};
        \node (v5) at ( 180:1) {6};
        
        \draw (v0)--(v1)--(v2)--(v3)--(v4)--(v5)--(v0);
    \end{tikzpicture}
    \caption{}
    \end{subfigure}
    \begin{subfigure}[b]{0.4\textwidth}
    \centering
    \begin{tikzpicture}[
            every node/.style={circle, draw, fill=white, inner sep=2pt},
            line width=0.9pt,
            smooth
        ]

        \node (v0) at (-2,0) [fill=red!30]{1};
        \node (v1) at (-1,0) {2};
        \node (v2) at  (0,0) {3};
        \node (v3) at  (1,0) {4};
        \node (v4) at  (2,0)[fill=cyan!30]{5};
        
        \draw (v0)--(v1)--(v2)--(v3)--(v4);
    \end{tikzpicture}
    \caption{}
    \end{subfigure}
    \caption{Strong game majority coloring of cycles}
    \label{F:strong_mu_cycles_ge3}
\end{figure}

\section{Hardness results related to the strong majority coloring game}

\subsection{Complexity of the decision version of the strong majority coloring game}
\label{S:Complexity}

In this section, we show that it is \PSPACE-complete to determine if Alice has a winning strategy with $k\geq3$ colors for a given graph \(G\) in the Strong Majority Coloring Game.
We formally define the decision version of the problem below.
\defproblem{Strong Majority Game Chromatic Number $(G,n,k)$}{Integer $k\geq3$, graph \(G\) on \(n\) vertices}{Does Alice have a winning strategy on this graph \(G\), using at most $k$ colors?}

The membership of the \textsc{Strong Majority Game Chromatic Number} in \PSPACE can be seen because the maximum number of moves that the game can last for is \(n\) (that is, it is polynomially bounded by the input size), hence, the game tree is of polynomial depth.
Also, in each turn there are at most $k\cdot n$ possible moves. 

Now, we show that the game is \PSPACE-hard by a reduction from the Game Coloring Problem $2$ which was shown to be \PSPACE-hard in \cite{CostaPessoaSampaioSoares2020}.
In this problem, there is a given (uncolored) graph \(G\) and an integer $k$.
In each player's turn, they pick an uncolored vertex and assign it a color which is not assigned to any of its neighbors.
The first player (Alice) wins if the entire graph is colored at the end and the second player (Bob) wins otherwise (i.e., if there is some vertex such that it cannot be assigned any color according to the rule).
This problem is \PSPACE-hard even when $k\geq 3$. We state the decision version of the problem below.

\defproblem{Game Coloring Problem (2)($G,n,m,k$)}{Graph \(G\) on \(n\) vertices and \(m\) edges, integer $k\geq 3$}{Does Alice have a winning strategy with $k$ colors?}

Now we construct an equivalent instance of \textsc{Strong Majority Game Chromatic Number} from a given instance $(G,n,m,k)$ of the \textsc{Game Coloring Problem (2)}.
This new instance is denoted by $(H,n+2m,k)$, where \(n\) is the number of vertices and \(m\) is the number of edges of the given input graph \(G\), $k$ is the number of colors in the given instance of the \textsc{Game Coloring Problem 2}. 

For each vertex $v\in V(G)$, we construct a vertex $v^\prime \in V(H)$.
We refer to $v^\prime$ as copy of \(v\) in \(H\), and the set of vertices in $V(H)$ which are the copy of some vertex in \(V(G)\) as \emph{original vertices}.
Denote the set of all original vertices as $V^\prime$.
For each edge $uv\in E(G)$, add two length-two paths from $u^\prime$ to $v^\prime$ in \(H\). 
Denote the intermediate vertices of these paths by $w^i_{uv}$, for $i\in \{1,2\}$.
That is, for each edge $uv$ in \(G\), there exist two length-two paths $u^\prime w^1_{uv}v^\prime$ and $ u^\prime w^2_{uv}v^\prime$ in \(H\).
Denote the set of vertices $\bigcup_{uv\in E(G)}\{w^1_{uv},w^2_{uv}\}$ by \(W\), and call the vertices in \(W\) as \emph{intermediate vertices}.
The construction of the reduced instance is now complete.
Note that all the original vertices have even degree and hence for any original vertex $v^\prime$, it cannot happen that $\lfloor\frac{v}{2}\rfloor$ of the neighbors of $v$ have one color, $\lfloor\frac{v}{2}\rfloor$ of the neighbors of $v$ have another color, and $v$ still has an uncolored neighbor.
See figure~\ref{fig:construction} for the construction of the reduced instance.

\begin{figure}
    \centering
    \begin{tikzpicture}[
            line width=0.9pt,
            smooth
        ]
    \draw(0,0) circle (0.2 cm);
    \node at (0,-0.5){\(u\)};
    \draw(1,0) circle (0.2 cm);
    \node at (1,-0.5){\(v\)};
    \draw(0.2,0)--(0.8,0);
    \draw(4,0) circle (0.2 cm);
    \node at (4,-0.5){$u^\prime$};
    \draw(6,0) circle (0.2 cm);
    \node at (6,-0.5){$v^\prime$};
    \node at (5,-0.5){$w^2_{uv}$};
    \draw(5,1) circle (0.2 cm);
    \node at (5,0.5){$w^1_{uv}$};
    \draw(5,-1) circle (0.2 cm);
    \draw (4.15,0.15)--(4.85,0.85);
    \draw (5.15,0.85)--(5.85,0.15);
    \draw (4.15,-0.15)--(4.85,-0.85);
    \draw (5.15,-0.85)--(5.85,-0.15);
   \end{tikzpicture}
    \caption{Construction of the two length-two paths in the reduced instance.}
    \label{fig:construction}
\end{figure}

Now we show that Alice has a winning strategy in the Game Coloring Problem (2) on \(G\) if and only if she has a winning strategy in the Strong Majority Coloring Game on \(H\). 

\begin{lemma}\label{lem:fwd}
    If Alice has a winning strategy in the Game Coloring Problem (2) on \(G\), then she has a winning strategy in the Strong Majority Coloring Game on \(H\).
\end{lemma}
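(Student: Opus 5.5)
The plan is a simulation argument. Alice plays her winning strategy $\sigma$ from the \textsc{Game Coloring Problem (2)} on $G$ over the original vertices $V^\prime$. She answers every move Bob makes on an intermediate vertex with a ``dummy'' move on its twin intermediate vertex. Two structural facts about $H$ make this work. First, legality on $V^\prime$ in the strong majority game on $H$ is exactly properness in $G$. Second, intermediate vertices can never become blocked and never influence the originals.

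First I will analyse what forbids a color in $H$. A color $c$ is forbidden at a vertex $x$ if and only if some neighbor $y$ of $x$ already has $\lfloor \deg(y)/2\rfloor$ neighbors colored $c$.
\begin{enumerate}
\item \emph{Original vertex $u^\prime$.} Its neighbors are intermediate vertices $w$ of degree $2$. Such a $w$ forbids $c$ at $u^\prime$ exactly when its other neighbor $v^\prime$ is colored $c$. So $c$ is legal at $u^\prime$ if and only if no $G$-neighbor $v$ of $u$ has $v^\prime$ colored $c$. This is precisely the proper coloring rule on $G$, and it does not depend on the colors of intermediate vertices at all.
\item \emph{Intermediate vertex $w=w^i_{uv}$.} Only its two endpoints can forbid colors at it, and each endpoint forbids at most one color. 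The reason is the even-degree remark after the construction: $u^\prime$ cannot have $d_G(u)$ neighbors of one color, $d_G(u)$ neighbors of another, and still the uncolored neighbor $w$. So at most $2<3\le k$ colors are forbidden at $w$, and $w$ always has a legal color.
\item \emph{Coloring intermediates.} Coloring an intermediate vertex never affects which colors are legal on $V^\prime$, by the first item.
\end{enumerate}

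Next I define Alice's strategy on $H$. She opens by coloring $v^\prime$ whenever $\sigma$ opens by coloring $v$, using the same color.
\begin{enumerate}
\item If Bob colors an original vertex $v^\prime$ with $c$, Alice interprets this as Bob's move ($v$, $c$) in $G$, which is legal there by the first item. She then plays $\sigma$'s reply on the corresponding original vertex. If all original vertices are already colored, she colors any uncolored vertex.
\item If Bob colors $w^1_{uv}$ (respectively $w^2_{uv}$), Alice colors its twin $w^2_{uv}$ (respectively $w^1_{uv}$) with any legal color, which exists by the second item.
\end{enumerate}

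I then check two invariants. The twin is always uncolored when Bob touches one vertex of a pair, because Alice colors intermediate vertices only as pairing replies. The only exception is after all originals are colored, and then she may play freely, since by the second item every remaining vertex is colorable. Because of the pairing, the sequence of moves on $V^\prime$ alternates exactly as in a game on $G$ with Alice starting, and every such move is legal in $G$. Hence the moves on $V^\prime$ form a play of $G$ consistent with $\sigma$, and $\sigma$ guarantees that every original vertex eventually receives a legal color. Intermediate vertices are never stuck. So the whole of $H$ gets colored, and Alice wins.

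The main obstacle is the precise legality analysis in the first two items. I must verify that the strong majority condition at an intermediate vertex $w$ itself (its two neighbors $u^\prime,v^\prime$ must not share a color) is automatically guaranteed by properness on $G$. I must also use the even-degree observation carefully, so that each endpoint forbids at most one color at $w$. Once that is in place, the pairing and parity bookkeeping is routine.
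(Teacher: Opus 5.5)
Your proposal is correct and takes essentially the same approach as the paper. Both use the same simulation strategy with the twin-pairing reply on $w^1_{uv},w^2_{uv}$, the same even-degree argument that at most two colors are ever forbidden at an intermediate vertex, and the same correspondence between legality at an original vertex $u^\prime$ in $H$ and properness in $G$. The differences are cosmetic. You derive that correspondence as a direct characterization (legality at $u^\prime$ depends only on the colors of copies of $G$-neighbors of $u$), whereas the paper argues by a first-failure contradiction. You also state the pairing and alternation invariant explicitly, which the paper leaves implicit.
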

\begin{proof}
    We first describe Alice's strategy on \(H\) and then argue that it is indeed a valid winning strategy. 
    \begin{enumerate}
        \item In the first move, suppose Alice colors the vertex \(v\) of \(G\) with color \(i\) according to her winning strategy for the Game Coloring Problem (2) on \(G\), then she colors the copy $v^\prime$ of \(v\) in the Strong Majority Coloring Game on \(H\).
        \item In each subsequent move, if Bob colors an intermediate vertex $w^{i}_{uv}$, with $i\in [1,2]$ with any color; then she responds by coloring the vertex $w^{3-i}_{uv}$ with any legal color, if it is uncolored. If it is already colored, she colors any intermediate vertex with a legal color.
        \item If Bob colors an original vertex say $u^\prime$ with the color \(i\), she assumes that Bob has colored \(u\) in \(G\) with the color \(i\) (such that $u^\prime$ was the copy of \(u\)). Then, according to her winning strategy in the Game Coloring Problem (2) on \(G\), suppose she is required to color the vertex $x$ on \(G\) with the color $j$, then she colors its copy $x^\prime$ on \(H\) with the same color $j$.
        \item If it is Alice's move and there is no original vertex which is uncolored, then she colors an intermediate vertex with a legal color.
    \end{enumerate}

We show that all of these moves are always legal moves for Alice and hence the graph \(H\) is colored at the end of the game, causing Alice to win. Whenever the strategy requires Alice to color an intermediate vertex (say $w^{i}_{uv}$), we need to show that a legal color is always available for her to color it. If a color (say $j$) is not legal for $w^i_{uv}$, then one of the neighbors of $w^{i}_{uv}$ must have exactly half of its neighbors already colored $j$. Since $w^i_{uv}$ has two neighbors \(u\) and \(v\), each of them can cause at most one color to be illegal at $w^i_{uv}$, thus there can be at most two colors which are not legal for $w^i_{uv}$. Since $k\geq3$, there is at least one legal color for Alice to use for coloring $w^i_{uv}$.

Next, we show the following:
\begin{enumerate}
    \item Whenever the strategy requires Alice to color an original vertex $x^\prime$ with a color $j$, it is a legal move in the Strong Majority Coloring Game.
    \item Whenever Bob colors a vertex $u^\prime$ with a legal color $r$ in the Strong Majority Coloring Game on \(H\), it is a legal move in the Game Coloring Problem (2) on \(G\) for Bob to color the vertex \(u\) with the color $r$, thus Alice can safely assume that Bob has colored \(u\) with the color $r$ on \(G\) and proceed.
\end{enumerate} 

Suppose either of the two statements above is not true.
Consider the first instance where one of the two statements is not true.
First, we consider the case where this situation occurs when Alice is required to color the vertex $x^\prime$ with the color $j$, but this move is not legal.
Then, there is a neighbor of $x^\prime$, which has half its neighbors already colored $j$.
By construction, the neighbor of an original vertex is an intermediate vertex, hence there exists a vertex $w^{i}_{xu}$ which has half its neighbors already colored $j$.
The neighbors of $w^{i}_{xu}$ are $x^\prime$ and $u^\prime$ (and recall that $xu$ is an edge in \(G\)), and as $x^\prime$ is uncolored (we assumed that Alice was about to color it but the color $j$ was not legal), the vertex $u^\prime$ must be colored $j$.
According to the Game Coloring Problem (2) on \(G\), the vertex \(u\) must have been colored $j$.
This is because Alice's moves on \(G\) are legal as they are based on a valid winning strategy and Bob's moves on \(G\) are assumed by Alice based on Bob's moves on \(H\), which are legal because till now both the statements $1$ and $2$ are assumed to be true.
Since the vertex \(u\) is colored $j$, coloring the vertex $x$ causes both the endpoints of the edge $xu$ to be colored $j$.
This is a contradiction, as Alice was playing according to a valid strategy on \(G\).  

Now consider the case where statement 2 fails first.
Recall that according to our assumption, both the statements hold true just before this instance.
This means that there is a vertex $u^\prime$ in \(H\) which Bob colors $j$, and this is a legal move in the Strong Majority Coloring Game; but Alice cannot assume that Bob has colored \(u\) in \(G\) with the color $j$, as this is not a legal move for Bob in the Game Coloring Problem (2).
Since the color $j$ is not a legal color for the vertex \(u\) in the Game Coloring Problem (2), this means that \(u\) has a neighbor \(v\) in \(G\), which is colored $j$.
Note that the color of $v^\prime$ is also $j$ in \(H\).
If Bob has colored \(v\), it is a legal move.
Since statement $1$ also holds true before this instance, if Alice colors the vertex $v^\prime$ with the color $j$ in \(H\) (because she colors \(v\) with the color $j$ in \(G\)), it is still a legal move. Now notice that if Bob colors \(u\) with $j$, both the neighbors of the vertex $w^1_{uv}$, i.e., $u^\prime$ and $v^\prime$ would have the color $j$, violating the strong majority condition.
This contradicts the fact that it is legal for Bob to color the vertex $u^\prime$ with the color $j$ in the Strong Majority Coloring Game on \(H\).

Thus, we have shown that Alice indeed has a winning strategy in the Strong Majority Coloring Game on \(H\), if she has a winning strategy in the Game Coloring Problem (2) on \(G\).
\end{proof}

Next, we show that the existence of a winning strategy for Bob on \(G\) in the Game Coloring Problem (2) implies the existence of a winning strategy for Bob on \(H\) in the Strong Majority Coloring Game.

\begin{lemma}\label{lem:rev}
    If Bob has a winning strategy in the Game Coloring Problem (2) on \(G\), then he has a winning strategy in the Majority Coloring Game on \(H\).
\end{lemma}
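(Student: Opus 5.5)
We will prove this by strategy stealing in the reverse direction, mirroring the proof of Lemma~\ref{lem:fwd}. Bob copies his winning strategy from \(G\) onto the original vertices \(V'\) of \(H\), and answers Alice's intermediate moves by pairing. This guarantees that no intermediate vertex ever ends up with both neighbours the same colour as a side effect of Bob's own play.

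\textbf{Bob's strategy.}
\begin{enumerate}
\item If Alice colours an intermediate vertex \(w^i_{uv}\), Bob colours its twin \(w^{3-i}_{uv}\) with any legal colour, if the twin is uncoloured. A legal colour exists because \(k\ge 3\) and at most two colours are forbidden at a degree-\(2\) vertex. If the twin is already coloured, Bob makes his next prescribed move on \(V'\); if no such move is available, he colours any intermediate vertex.
\item If Alice colours an original vertex \(u'\) with colour \(j\), Bob interprets this as Alice colouring \(u\) with \(j\) in \(G\). He then plays the response \(x\mapsto j'\) of his winning strategy on \(G\) by colouring \(x'\) with \(j'\).
\end{enumerate}

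\textbf{Consistency invariant.} We maintain that colouring \(x'\) with \(j\) is legal in \(H\) exactly when colouring \(x\) with \(j\) is legal in \(G\), given the currently coloured original vertices. We prove this by induction, taking the first violation as in Lemma~\ref{lem:fwd}.
\begin{enumerate}
\item \emph{Legal in \(G\) implies legal in \(H\).} A colour \(j\) is illegal at \(x'\) only if some intermediate neighbour \(w^i_{xu}\) already has a neighbour coloured \(j\). Since \(x'\) is uncoloured, that neighbour is \(u'\), so \(u\) is coloured \(j\) in \(G\) and \(j\) is illegal at \(x\).
\item \emph{Legal in \(H\) implies legal in \(G\).} If \(u\) in \(G\) is adjacent to \(v\) coloured \(j\), then colouring \(u'\) with \(j\) would put both neighbours of \(w^1_{uv}\) in colour \(j\), which is illegal in \(H\).
\item \emph{Intermediate colourings.} These never affect legality at original vertices except through the condition ``half the neighbours of \(w\) coloured \(j\)''. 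For a degree-\(2\) vertex this means one neighbour coloured \(j\), so only the colours of \(u'\) and \(v'\) matter.
\item \emph{Effect of intermediate colours on original vertices.} An original vertex \(v'\) has even degree \(4\deg_G(v)\), so intermediate colours can forbid colours at \(v'\) through the condition at \(v'\) itself. We must check that these constraints never prevent the transfer. This is the point where the twin pairing is needed.
\end{enumerate}

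\textbf{Conclusion.} By the invariant, the game restricted to \(V'\) is a faithful copy of a game on \(G\) in which Bob plays his winning strategy. That copy reaches a position where some \(x\in V(G)\) is uncoloured and all \(k\) colours appear on its neighbours. Then \(x'\) has, for every colour \(j\), an intermediate neighbour \(w^1_{xu}\) whose other neighbour \(u'\) is coloured \(j\), so all \(k\) colours are forbidden at \(x'\). Bob wins at that moment, provided \(x'\) has not already been coloured.

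\textbf{Main obstacle.} The hardest step is controlling Alice's intermediate moves. Intermediate colours cannot forbid a colour at \(v'\) via the neighbours of \(v'\) unless \(2\deg_G(v)\) of the \(w\)'s around \(v'\) share a colour. Such an accumulation could block Alice rather than Bob, so it is harmless to Bob, but it could also make a move that Bob's \(G\)-strategy prescribes illegal in \(H\). Bob's pairing prevents Alice from forcing such accumulations with tempo.

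The delicate case is the endgame. Alice may spend moves on intermediate vertices to delay, and once \(V'\) is exhausted, the parity shifts. I would handle this with an extra-move lemma, taken from the same paper of Costa--Pessoa--Sampaio--Soares, that Bob's winning strategy in Game Coloring Problem (2) survives passes. Failing that, I would argue that Alice passing on \(G\) equals an intermediate move answered by Bob's twin, so the tempo on \(V'\) is preserved.
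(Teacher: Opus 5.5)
Your strategy (copy the \(G\)-strategy on \(V'\), answer Alice's intermediate moves on the twin) and your legality transfer in points 1--3 are the paper's argument, and they are correct. Point 4 and the ``main obstacle'' paragraph, however, misread the strong majority condition; it looks as if the ordinary majority condition has crept in. Colouring \(v'\) with \(j\) changes only the colour counts in \(N(w)\) for \(w\in N(v')\). It never affects the condition at \(v'\) itself, since \(v'\notin N(v')\). So, as your own point 3 already shows, the colours on intermediate vertices have no influence on which colours are legal at original vertices: \(j\) is legal at \(x'\) if and only if no \(u'\) with \(xu\in E(G)\) is coloured \(j\). An accumulation of one colour on the \(w\)'s around \(v'\) only forbids that colour at the remaining \(w\)'s around \(v'\). (Note that \(\deg_H(v')=2\deg_G(v)\), not \(4\deg_G(v)\).) Because \(\deg_H(v')\) is even, \(v'\) forbids at most one colour at an uncoloured neighbour, so every intermediate vertex keeps a legal colour when \(k\ge 3\). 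There is nothing to check in point 4, and the pairing plays no role in legality.

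The pairing is needed only for tempo, and there your fallback is the actual proof, so drop the appeal to an extra-move lemma. No such lemma is available to cite, and robustness under passes is not a general property of the colouring game. Already on \(P_4\) with two colours, Bob wins if Alice must move first, but Alice wins if she may pass her first turn. What you need instead is a small induction: as long as Bob has not yet won, after each of Bob's moves the coloured intermediate vertices form complete twin pairs. Hence whenever Alice colours an intermediate vertex, its twin is uncoloured and Bob answers there. Your clause ``if the twin is already coloured, make the next prescribed move on \(V'\)'' therefore never fires. This matters, because if it fired it would give Bob an out-of-turn move on \(V'\) and break the mirroring. With the induction in place, the moves on \(V'\) alternate Alice, Bob, Alice, \dots{} exactly as in \(G\). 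The endgame worry is vacuous as well. The play on \(V'\) is a legal play of \(G\) in which Bob follows his winning strategy, so it reaches an uncolourable \(x\) before \(V'\) is exhausted. By the previous paragraph, no later move on intermediate vertices can make a colour legal at \(x'\) again.
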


\begin{proof}
     We first describe Bob's winning strategy on \(H\) and then argue that it is indeed a valid winning strategy. 
    \begin{enumerate}
         \item In each move, if Alice colors an original vertex say $u^\prime$ with the color \(i\), Bob assumes that she has colored \(u\) in \(G\) with the color \(i\) (such that $u^\prime$ was the copy of \(u\)). Then, according to his winning strategy in the Game Coloring Problem (2) on \(G\), suppose he is required to color the vertex $x$ on \(G\) with the color $j$, then he colors its copy $x^\prime$ on \(H\) with the same color $j$.
        \item If it is Bob's move and there is no original vertex which is uncolored, then he colors an intermediate vertex with a legal color.
        \item In each move, if Alice colors an intermediate vertex $w^{i}_{uv}$, with $i\in [1,2]$ with any color; then he responds by coloring the vertex $w^{3-i}_{uv}$ with any legal color, if it is uncolored. Otherwise, he colors any intermediate vertex with a legal color.   
    \end{enumerate}
Now, we show that this strategy always gives a valid move for Bob in the Strong Majority Coloring Game on \(H\), till he wins in the Game Coloring Problem (2) on \(G\).
As soon as he wins the Game Coloring Problem (2) on \(G\), we show that according to this strategy, he also wins the Strong Majority Coloring Game on \(H\). 

For this, note that there is always a legal color available at an intermediate vertex, whenever any player tries to color it.
This is because as shown in the proof of Lemma~\ref{lem:fwd} above, the two neighbors of an intermediate vertex can forbid at most two colors, and as $k\geq 3$, there is always a third color available.

Consider a partial game state where Bob has not yet won in the Game Coloring Problem (2) on \(G\). We show the following:
\begin{enumerate}
    \item Whenever the strategy requires Bob to color an original vertex $x^\prime$ with a color $j$, it is a legal move in the Strong Majority Coloring Game.
    \item Whenever Alice colors a vertex $u^\prime$ with a legal color $r$ in the Strong Majority Coloring Game on \(H\), it is a legal move in the Game Coloring Problem (2) on \(G\) for her to color the vertex \(u\) with the color $r$, thus Bob can safely assume that Alice has colored \(u\) with the color $r$ on \(G\) and proceed.
\end{enumerate}

Consider the first instance where one of the above statements fails.
First, consider the case where statement 1 fails first.
This means that the strategy requires Bob to color an original vertex $x^\prime$ with a color $j$, but it is not a legal move in the Strong Majority Coloring Game.
Then, there is a neighbor of $x^\prime$, which has half its neighbors already colored $j$.
By construction, the neighbor of an original vertex is an intermediate vertex, hence there exists a vertex $w^{i}_{xu}$ which has half its neighbors already colored $j$.
The neighbors of $w^{i}_{xu}$ are $x^\prime$ and $u^\prime$ (and recall that $xu$ is an edge in \(G\)), and as $x^\prime$ is uncolored (we assumed that Bob was about to color it with the color $j$), the vertex $u^\prime$ must be colored $j$.
According to the Game Coloring Problem (2) on \(G\), the vertex \(u\) must have been colored $j$.
This is because Bob has not yet won on \(G\), and both the statements $1$ and $2$ are assumed to be true.
Since the vertex \(u\) is colored $j$, coloring the vertex $x$ causes both the endpoints of the edge $xu$ to be colored $j$.
This is a contradiction, as Bob was playing according to a valid strategy on \(G\), and has not yet won.  

Now consider the case where statement 2 fails first.
This means that it is legal for Alice to color a vertex $u^\prime$ with a color $r$ in the Strong Majority Coloring Game, but it is not legal for her to color the vertex \(u\) with the color $r$ on \(G\).
This means that there exists a neighbor \(v\) of \(u\) in \(G\), such that \(v\) is already colored $r$ in the Game Coloring Problem (2) on \(G\).
Observe that $v^\prime$ must be colored $r$ in the Strong Majority Coloring Game on \(H\).
This is because if \(v\) is colored $r$ on \(G\) by Bob in the Game Coloring Problem (2), then it was legal for him to color $v^\prime$ with the color $r$ in the Strong Majority Coloring Game on \(H\), as he had not yet won on \(G\) and statement 1 is assumed to be true.
If \(v\) is colored $r$ on \(G\) by Alice in the Game Coloring Problem (2), as Bob had not yet won, hence statement 2 holds, and Alice had colored $v^\prime$ with the color $r$ on \(H\) in the Strong Majority Coloring Game, because of which Bob assumed that Alice colored \(v\) with the color $r$ in the Game Coloring Problem (2).
Since $v^\prime$ is colored $r$ on \(H\), the vertex $w^{1}_{uv}$ has one of its two neighbors colored $r$.
Therefore, it is not legal in the Strong Majority Coloring Game for the other neighbor of $w^1_{uv}$, i.e., $u^\prime$ to be colored $r$, which contradicts the fact that it was a legal move for Alice.

It remains to show that as soon as Bob wins in the Game Coloring Problem (2) on \(G\), Bob also wins the Strong Majority Coloring Game on \(H\).
Bob wins in the Game Coloring Problem (2) on \(G\), whenever there is a vertex \(v\) in \(G\) such that it cannot be colored with any of the $k$ colors.
This can only happen when each of the $k$ colors is already present on some neighbor of \(v\).
That is, there is a situation where a vertex \(v\) has neighbors $v_1,v_2,\ldots,v_k$, where $v_1$ is colored with color $1$, $v_2$ is colored with color $2$ and so on.
The vertex \(v\) may have other neighbors. We show that none of the $k$ colors is legal at the vertex \(v\) on \(H\).
Since both statements $1$ and $2$ hold before Bob has won on \(G\), the vertex $v_1^\prime$ is colored with color $1$, the vertex $v_2^\prime$ is colored with color $2$ and so on, in the Strong Majority Coloring Game on \(H\).
Consider a color $i\in [1,k]$. If the color \(i\) is assigned to the vertex $v^\prime$ (by any player), both the neighbors $v^\prime$ and $v_i^\prime$ of the intermediate vertex $w^1_{vv_i}$ would have the color \(i\), which violates the strong majority condition at $w^1_{vv_i}$.
Therefore, none of the $k$ colors is legal at $v^\prime$, and Bob wins the Strong Majority Coloring Game on \(H\).

\end{proof}

The Lemmas~\ref{lem:fwd} and \ref{lem:rev} imply that Alice has a winning strategy in the Game Coloring Problem (2) on \(G\), if and only if, she has a winning strategy in the Strong Majority Coloring Game on \(H\).
This, along with the fact that the reduction takes polynomial time, we get the following result:

\begin{theorem}
The decision version of the \textsc{Strong Majority Game Chromatic Number} is \PSPACE-complete (when $k\geq 3$).
\end{theorem}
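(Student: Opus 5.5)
The theorem follows by combining membership in \PSPACE with the hardness reduction built above.

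For membership, I would use the observation made at the start of this subsection. Every move colors a previously uncolored vertex, so any play lasts at most \(n\) moves. At each position there are at most \(k\cdot n\) candidate moves, and each one can be checked for legality in polynomial time by inspecting the neighborhoods of the affected vertices and comparing the color counts against \(\lfloor \deg/2 \rfloor\). A depth-first evaluation of the game tree, alternating existential (Alice) and universal (Bob) quantifiers, therefore needs only polynomial space: one partial coloring per level and polynomially many levels. If \(k > n\), we may cap the palette at \(n\) colors without loss of generality so that the input size stays honest, since at most \(n\) colors can ever be used.

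For hardness, I would reduce from \textsc{Game Coloring Problem (2)}, which is \PSPACE-hard for \(k\ge 3\) by \cite{CostaPessoaSampaioSoares2020}. Given \((G,n,m,k)\), build \(H\) as described: keep a copy \(v'\) of each vertex, and replace each edge \(uv\) by two internally disjoint paths \(u'w^1_{uv}v'\) and \(u'w^2_{uv}v'\). Then \(H\) has \(n+2m\) vertices and \(4m\) edges, so the construction takes polynomial time. Correctness is exactly the conjunction of Lemma~\ref{lem:fwd} and Lemma~\ref{lem:rev}. Since the game is finite and has no draws, exactly one player has a winning strategy. Hence Alice wins on \(G\) if and only if she wins the strong majority game on \(H\) with the same \(k\colors\).

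The step I would scrutinize most is the correctness of the simulation underlying the two lemmas. Two facts carry it. First, an intermediate vertex always has a legal color when \(k\ge3\), because it has degree \(2\) and each of its two neighbors can forbid at most one color. Second, the legality of coloring original vertices in \(H\) coincides exactly with proper-coloring legality in \(G\). The direction ``legal in \(G\) implies legal in \(H\)'' needs care, because an original vertex \(x'\) could in principle be blocked by a color class at one of its intermediate neighbors \(w\). Since \(\deg(w)=2\), such a block occurs only when \(w\)'s other neighbor, an original vertex, already carries that color, and that is precisely a conflict along an edge of \(G\). Intermediate vertices never restrict original vertices, because the only neighbors of an intermediate vertex are original vertices.

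With these checks the pairing responses on \(w^1_{uv},w^2_{uv}\) preserve parity, so that each player can keep following the simulated strategy on \(G\), and the theorem follows.
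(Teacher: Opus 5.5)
Your proposal is correct and follows the paper's proof. Membership comes from a polynomial-depth game-tree search, and hardness comes from reducing \textsc{Game Coloring Problem (2)} to the strong majority game on \(H\), where each edge is replaced by two internally disjoint paths of length two, with correctness given by Lemmas~\ref{lem:fwd} and~\ref{lem:rev}. Your extra remarks (explicit determinacy, at most two forbidden colors at an intermediate vertex, legality on original vertices matching properness in \(G\), and pairing preserving tempo) only spell out facts those lemmas already rely on.
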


\begin{figure}
    \centering
    \begin{tikzpicture}[smooth,
    line width=0.7pt]
    \draw[fill=blue!40](4,0) circle (0.2 cm);
    \node at (4,-0.5){$u^\prime$};
    \draw(6,0) circle (0.2 cm);
    \node at (6,-0.5){$v^\prime$};
    \draw(5,0.5) circle (0.2 cm);
    \node at (5,0){$w^1_{uv}$};
    \draw(5,-0.5) circle (0.2 cm);
    \node at (5,-1){$w^2_{uv}$};
    \draw (4.15,0.15)--(4.8,0.5);
    \draw (5.2,0.5)--(5.85,0.15);
    \draw (4.15,-0.15)--(4.8,-0.5);
    \draw (5.2,-0.5)--(5.85,-0.15);

     \draw[fill=red!40](8,0) circle (0.2 cm);
    \node at (8,-0.5){$x^\prime$};
    \draw(8,0) circle (0.2 cm);
     \node at (7,0){$w^1_{xv}$};
    \draw(7,-0.5) circle (0.2 cm);
    \node at (7,-1){$w^2_{xv}$};
    \draw(7,0.5) circle (0.2 cm);
    \draw (6.15,0.15)--(6.8,0.5);
    \draw (7.2,0.5)--(7.85,0.15);
    \draw (6.15,-0.15)--(6.8,-0.5);
    \draw (7.2,-0.5)--(7.85,-0.15);

     \draw[fill=green!40](6,3) circle (0.2 cm);
     \node at (6,3.5){$y^\prime$};
      \draw(5.5,1.5) circle (0.2 cm);
     \node at (4.8,1.7){$w^1_{xv}$};
    \draw(6.5,1.5) circle (0.2 cm);
    \node at (7.2,1.7){$w^2_{xv}$};
    \draw (6.1,0.2)--(6.5,1.3);
    \draw (5.9,0.2)--(5.5,1.3);
      \draw (6.1,2.8)--(6.5,1.7);
    \draw (5.5,1.7)--(5.9,2.8);

    \draw[fill=green!40](1,1) circle (0.2 cm);
    \node at (1,1.5){$y$};
    \draw[fill=red!40](2,0) circle (0.2 cm);
    \node at (2.5,0){$x$};
     \draw[fill=blue!40](0,0) circle (0.2 cm);
     \node at (-0.5,0){$u$};
    \draw(1,0) circle (0.2 cm);
    \node at (1,-0.5){$v$};
    \draw(0.2,0)--(0.8,0);
    \draw(1.2,0)--(1.8,0);
    \draw(1,0.2)--(1,0.8);
        
    \end{tikzpicture}
    
    \caption{A subgraph of a graph $G$ in the coloring game with three colors and the corresponding subgraph $H$ in the strong majority coloring game with three colors. Note that Bob wins irrespective of the coloring of the intermediate vertices because the vertex $v^\prime$ cannot be colored blue ($w^1_{uv}$ already has a blue neighbor $u^\prime$), cannot be colored red ($w^1_{xv}$ already has a red neighbor $x^\prime$), and cannot be colored green ($w^1_{yv}$ already has a green neighbor $y^\prime$). In graph $G$, the vertex $v$ cannot be colored blue because it has a blue neighbor $u$, cannot be colored red because it has a red neighbor $x$, and cannot be colored green because it has a green neighbor $y$.}
    \label{fig:strongmajoritycoloringgame}
\end{figure}

\subsection{Complexity of deciding whether a given graph $G$ has a strong majority coloring with $2$ colors}
\label{S:complexity_strong_mu_2color}

The concept of \emph{neighborhood balanced colorings} has been defined in \cite{FreybergMarr2024}, where the authors study $2$-colorings of a graph such that every vertex has exactly half of its neighbors red and the other half blue.
This is exactly the strong majority coloring of a graph with two colors.
However, for more than two colors, these two concepts are not the same.
In case of neighborhood balanced $k$-colorings, a vertex must have exactly \(\frac{1}{k}\) of its neighbors colored with each of the $k$ colors.
Hence, a degree six vertex can have three red neighbors, two blue neighbors and one green neighbor in a strong majority coloring with three colors, however, this is not a valid neighborhood balanced $3$-coloring.
The neighborhood balanced $k$-colorings of graphs have been studied in \cite{AlmeidaSinghEtal2025}.

It was shown in \cite{Asaeedi2024} that it is \NP-complete to determine whether a given graph $G$ (with every vertex having an even degree) admits a neighborhood balanced coloring with $2$-colors.
This would also mean that it is \NP-complete to determine whether a given graph $G$ (with every vertex having an even degree) admits a strong majority coloring with two colors.
The author shows \NP-hardness by a reduction from the set partition problem, which was shown to be \NP-complete by Karp in 1972 \cite{Karp1972}.
We state this problem formally as follows:
 \defproblem{Set Partition}{A set $S=\{a_1,a_2,\ldots,a_k\}$ of $k$ positive integers.}{Does there exist a partition $S_1\uplus S_2$ of $S$ such that the sum of the elements in the two parts is equal, i.e., $\sum_{a\in S_1}a=\sum_{a\in S_2}a$?}

However, this reduction is not polynomial time because for an integer $n$, there are $n$ vertices adjacent to a common vertex drawn in a graph.
But the integer $n$ is given to us as a binary encoding, taking only $log(n)$ bits.
The Set Partition problem is only weakly \NP-hard, i.e., it is \NP-hard when the input integers are given to us as binary encodings, not as unary encodings. 

But, we show that the decision version of the Strong Majority Coloring problem with two colors, is \NP-complete regardless. We state the problem formally below:

\defproblem{Strong Majority $2$-Coloring $G(n)$}{An Eulerian Graph $G$ on $n$ vertices}{Does $G$ have a strong majority coloring with $2$ colors?}

It can be observed that the \textsc{Strong Majority $2$-Coloring} problem is in \NP because given a coloring $c$ it can be efficiently verified whether $c$ is a strong majority coloring.
We show that the Strong Majority $2$-Coloring problem is \NP-hard by a reduction from the Not-All-Equals $3$-SAT problem.

\defproblem{Not All Equals $3$-SAT (NAE $3$-SAT)}{A Boolean formula $\Phi$ in CNF with $m$ clauses and $n$ variables, and each clause containing exactly $3$ literals.}{Does there exist an assignment of each variable $x_i$ to $T/F$ such that $\Phi$ is satisfied, with there being at least one literal assigned $T$, and one literal assigned $F$ in each clause?}

We describe how to construct an equivalent instance of \textsc{Strong Majority $2$-Coloring} in polynomial time.

For each clause $C_j$ ($j\in [1,m]$), create two \emph{clause} vertices $C^1_j$ and $C^2_j$ in the graph $G$, and add an \emph{intermediate} vertex $i_j$ adjacent to both $C^1_j$ and $C^2_j$.
That is, there is a length-two path in $G$ corresponding to each clause $C_j$ ($j\in [1,m]$), with endpoints $C^1_j$ and $C^2_j$.

For each variable $x_i$ ($i\in [1,n]$), create two vertices in $V(G)$, and denote them by $x_i$ and $\bar{x_i}$. We refer to these vertices as \emph{literal vertices}.
Also create two vertices $p_i$ and $q_i$, such that $p_i$ and $q_i$ are both adjacent to $x_i$ and $\bar{x_i}$.
The vertices $p_i,x_i,q_i$ and $\bar{x_i}$ induce a four-cycle with $x_i$ and $\bar{x_i}$ being opposite endpoints, for each $i\in [1,n]$. We refer to the vertices $p_i$ and $q_i$ as \emph{joining vertices}, for each $i\in [1,n]$.

Whenever the literal $x_i$ or $\bar{x_i}$ appears in the clause $C_j$, add edges from the literal vertex $x_i$ or $\bar{x_i}$ to both the clause vertices $C^1_j$ and $C^2_j$.

The description of the construction of the reduced instance is now complete and it is clear that the graph $G$ can be constructed from the formula $\Phi$ in polynomial time.
It can also be verified that the degree of each vertex in the graph $G$ is even.
Now we show that the reduced instance $G(3m+4n)$ of Strong Majority $2$-Coloring is indeed equivalent to the original instance $\Phi$ of NAE $3$-SAT.

\begin{lemma}\label{lem:fw}
If the formula $\Phi$ has a satisfying assignment for the Not-All-Equals $3$-SAT problem, then the graph $G$ has a strong majority coloring with $2$ colors.
\end{lemma}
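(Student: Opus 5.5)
The plan is to write down an explicit red/blue coloring of \(G\) from a Not-All-Equals satisfying assignment, and then check the strong majority condition vertex by vertex. Every vertex of \(G\) has even degree. So with two colors the strong majority condition says that each vertex has \emph{exactly} half of its neighbors red and half blue, which is the neighborhood balanced condition. Hence for each vertex type (intermediate, clause, literal, joining) I will just count the red and blue neighbors.

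\textbf{The coloring.} Fix a satisfying NAE assignment.
\begin{itemize}
\item Color the literal vertex \(x_i\) red if \(x_i\) is true and blue otherwise, and give \(\bar{x_i}\) the opposite color.
\item For every \(i\), color \(p_i\) red and \(q_i\) blue.
\item For every clause \(C_j\), color \(C^1_j\) red and \(C^2_j\) blue.
\item Color the intermediate vertex \(i_j\) with the color carried by exactly one of the three literal vertices in \(C_j\). This minority color exists because the three literals of \(C_j\) are not all equal under the assignment, so the colors on their literal vertices split \(2\)--\(1\).
\end{itemize}

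\textbf{Verification.}
\begin{itemize}
\item \emph{Joining vertices.} \(p_i\) and \(q_i\) have neighbors \(x_i\) and \(\bar{x_i}\), which have opposite colors.
\item \emph{Intermediate vertices.} \(i_j\) has neighbors \(C^1_j\) (red) and \(C^2_j\) (blue).
\item \emph{Clause vertices.} Each of \(C^1_j\) and \(C^2_j\) has degree \(4\): its neighbors are the three literal vertices of \(C_j\) together with \(i_j\). The literal vertices contribute two of the majority color and one of the minority color, and \(i_j\) adds one more of the minority color. This gives exactly \(2\) red and \(2\) blue.
\item \emph{Literal vertices.} The neighbors of \(x_i\) (and likewise of \(\bar{x_i}\)) are \(p_i\), \(q_i\), and the pair \(C^1_j, C^2_j\) for every clause \(C_j\) containing that literal. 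Each of these pairs consists of one red and one blue vertex, as do \(p_i, q_i\). So \(x_i\) is balanced regardless of its own color and of how many clauses it appears in.
\end{itemize}

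\textbf{Main obstacle.} The only delicate point is the clause vertex count. It needs the three literal vertices adjacent to \(C^1_j\) to be three \emph{distinct} vertices, so that \(\deg(C^1_j)=4\) and the count above is valid. I will handle this by using the standard assumption that each clause has three distinct literals, or by noting that the NAE \(3\)-SAT instances can be normalized this way.

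A clause containing both \(x_i\) and \(\bar{x_i}\) causes no problem. Those two literal vertices get opposite colors, and the NAE requirement then just says that the third literal is unconstrained; the \(2\)--\(1\) split still holds, so the argument goes through unchanged. With this, the coloring is a strong majority \(2\)-coloring of \(G\).
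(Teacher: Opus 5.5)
Your proposal is correct and is essentially the paper's proof. The paper uses the same coloring up to swapping red and blue: $p_i,q_i$ and $C^1_j,C^2_j$ get opposite colors, and $i_j$ gets the minority color among the literal neighbors of $C^1_j$. It then checks balance at the joining, intermediate, literal and clause vertices exactly as you do. Your remark that the clause-vertex count needs the three literals of each clause to be distinct vertices (so that $\deg(C^1_j)=4$) is an assumption the paper leaves implicit, and your handling of it, including clauses containing both $x_i$ and $\bar{x_i}$, is sound.
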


\begin{proof}
    Consider a satisfying assignment for $\Phi$ in the Not-All-Equals $3$-SAT problem.
    We describe a coloring of the graph $G$ and then show that it is indeed a strong majority coloring.

    \begin{enumerate}
        \item Whenever a variable $x_i$ is set to $T$, color the literal vertex $x_i$ blue, and the literal vertex $\bar{x_i}$ red in $G$.
        \item Whenever a variable $x_i$ is set to $F$, color the literal vertex $x_i$ red, and the literal vertex $\bar{x_i}$ blue in $G$.
        \item For each $i\in [1,n]$, color the joining vertex $p_i$ blue, and the joining vertex $q_i$ red.
        \item For each $j\in [1,m]$, color the clause vertex $C^1_m$ blue, and the clause vertex $C^2_m$ red.
        \item Since this is a satisfying assignment for $\Phi$ in NAE $3$-SAT, no clause has all three literals true or all three literals false.
        Thus, each $C^1_j$ (or $C^2_j$) is adjacent to two blue literal vertices and one red literal vertex, or two red literal vertices and one blue literal vertex.
        For each $j\in [1,m]$, do the following: 
        
        If $C^1_j$ (and $C^2_j$) is adjacent to two blue literal vertices and one red literal vertex, color the intermediate vertex $i_j$ red; otherwise color the intermediate vertex $i_j$ blue.
        
    \end{enumerate}

Each joining vertex $p_i$ (or $q_i$) ($i\in [1,n]$) has one blue neighbor and one red neighbor because as per the coloring, whenever $x_i$ is blue, $\bar{x_i}$ is red, and vice versa.
Each intermediate vertex $i_j$ (for $j\in [1,m]$) has one blue neighbor $C^1_j$ and one red neighbor $C^2_j$.

Consider a literal vertex $x_i$ (or $\bar{x_i}$), where $i\in [1,n]$, such that the literal $x_i$ (or $\bar{x_i}$) belongs to $k$ clauses $C_{j_1}, C_{j_2},\ldots, C_{j_k}$.
The literal vertex $x_i$ (or $\bar{x_i}$) has $k$ blue neighbors $C^1_{j_1}, C^1_{j_2},\ldots, C^1_{j_k}$, and $k$ red neighbors $C^2_{j_1}, C^2_{j_2},\ldots,C^2_{j_k}$, one blue neighbor $p_i$ and one red neighbor $q_i$.
Therefore, each literal vertex $x_i$ or $\bar{x_i}$ has exactly the same number of red and blue neighbors.

Now consider each clause vertex $C^1_j$ (or $C^2_j$), where $j\in [1,m]$.
Since $\Phi$ is a YES instance of NAE $3$-SAT, the clause $C_j$ has two true literals and one false literal, or two false literals and one true literal.
In the first case, the clause vertex $C^1_j$ (or $C^2_j$) has two blue neighbors and one red neighbor among the literal vertices, and the intermediate vertex is colored red in this case.
Thus, the clause vertex $C^1_j$ (or $C^2_j$) has exactly two blue neighbors and two red neighbors.
In the second case, the clause vertex $C^1_j$ (or $C^2_j$) has two red neighbors and one blue neighbor among the literal vertices.
In this case, the intermediate vertex is colored blue. Therefore, in this case also, the clause vertex $C^1_j$ (or $C^2_j$) has exactly two blue neighbors and two red neighbors.

Thus, we have shown that every vertex of $G$ has exactly half of its neighbors colored blue and the other half colored red.
Therefore, this is a strong majority coloring of the graph $G$ with two colors. 

\end{proof}

Now we show that the existence of a strong majority coloring of $G$ with two colors implies the existence of a satisfying assignment for $\Phi$.

\begin{lemma}\label{lem:re}
    If $G$ has a strong majority coloring, then the formula $\Phi$ has a satisfying assignment such that each clause of $\Phi$ contains at least one true literal and one false literal.
\end{lemma}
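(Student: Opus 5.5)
The plan is to read a truth assignment directly off the colors of the literal vertices, and then use the degree constraints of the construction to check the not-all-equal property. The first step is a general observation. Every vertex of $G$ has even degree, so in a strong majority coloring with two colors each vertex $v$ has at most $\deg(v)/2$ neighbors of each color. Since these two counts add up to $\deg(v)$, both must equal exactly $\deg(v)/2$. In other words, any strong majority $2$-coloring of $G$ is a neighborhood balanced coloring: every vertex sees exactly half its neighbors red and half blue. Fix such a coloring with colors blue and red.

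Next I apply this balance to the small gadgets.
\begin{itemize}
\item The joining vertex $p_i$ has exactly the two neighbors $x_i$ and $\bar{x_i}$. Balance at $p_i$ forces $x_i$ and $\bar{x_i}$ to receive different colors.
\item Similarly, balance at $i_j$ forces $C^1_j$ and $C^2_j$ to receive different colors. This fact is not needed for the assignment itself.
\end{itemize}
Now define the assignment: set the variable $x_i$ to $T$ if the literal vertex $x_i$ is blue, and to $F$ otherwise. Because $\bar{x_i}$ always has the opposite color to $x_i$, a literal is true exactly when its literal vertex is blue, for both positive and negated literals. So this rule gives a consistent assignment.

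The key step uses the clause vertices. Assuming each clause consists of three literals on distinct variables, $C^1_j$ has degree $4$: its neighbors are the three literal vertices of $C_j$ together with $i_j$. Balance at $C^1_j$ means exactly two of these four neighbors are blue and exactly two are red. Hence the three literal neighbors cannot all have the same color, since that color would then appear at least three times. It follows that $C_j$ contains at least one true literal (a blue literal vertex) and at least one false literal (a red literal vertex). This holds for every $j$, so the assignment is NAE-satisfying.

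The main obstacle I expect is not the argument above but the degenerate inputs. If a clause repeats a literal, or contains both $x_i$ and $\bar{x_i}$, then the construction as literally stated produces multi-edges, or a clause vertex of degree other than $4$. I would dispose of this at the outset with the standard assumption that each clause has three literals on distinct variables; NAE $3$-SAT remains \NP-complete under this restriction, or one can preprocess the formula to achieve it. With that assumption in place, the clause vertices have degree exactly $4$ and the argument goes through. Combined with Lemma~\ref{lem:fw}, this yields Theorem~\ref{T:Main_np_complete}.
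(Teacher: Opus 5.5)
Your proposal is correct and follows the paper's proof essentially step for step. The joining vertex $p_i$ forces $x_i$ and $\bar{x_i}$ to receive opposite colors, the assignment is read off from the color of the literal vertex $x_i$, and the degree-$4$ clause vertex $C^1_j$ cannot have three literal neighbors of one color; your detour through exact neighborhood balance is harmless, though the bound of at most two neighbors per color already suffices.

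Your caveat about degenerate clauses is well taken, because the paper silently uses $\deg(C^1_j)=4$: for $\Phi=(x_1\lor x_1\lor x_1)$ the simple-graph construction admits a strong majority $2$-coloring although $\Phi$ is not NAE-satisfiable, so a distinctness assumption on the literals of each clause is genuinely needed.
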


\begin{proof}
    Suppose that the graph $G$ has a strong majority coloring with two colors.
    Consider a strong majority coloring of the graph $G$ with two colors.
    For any $i\in [1,n]$, the literal vertices $x_i$ and $\bar{x_i}$ cannot have the same color as the vertex $p_i$ has only these two vertices as its neighbors. 
    
    \begin{enumerate}
    \item For each $i\in [1,n]$, whenever the literal vertex $x_i$ is colored blue (and $\bar{x_i}$ is colored red), set the variable $x_i=T$. 
    
    \item For each $i\in [1,n]$, whenever the literal vertex $x_i$ is colored red (and $\bar{x_i}$ is colored blue), set the variable $x_i=F$.
    \end{enumerate}

    In any strong majority coloring of $G$, no clause vertex $C^1_j$ can have three blue or three red neighbors among the literal vertices, because the degree of the vertex $C^1_j$ is four, and it can have at most two neighbors of the same color.
    Therefore, for every $j\in [1,m]$, the clause vertex $C^1_j$ has two blue and one red neighbors, or two red and one blue neighbors among the literal vertices.
    Hence, each clause $C_j$ contains two $T$ and one $F$ literal, or two $F$ and one $T$ literal. 

    Thus, $\Phi$ is a YES instance of the NAE $3$-SAT problem.

\end{proof}
From Lemmas~\ref{lem:fw} and \ref{lem:re}, and the fact that the reduction takes polynomial time, we get the main result below.
\begin{theorem}
    The \textsc{Strong Majority $2$-Coloring} problem in \NP-complete on Eulerian graphs.
\end{theorem}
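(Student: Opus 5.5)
The plan is to combine the two ingredients already in place: membership in \NP and the polynomial-time reduction from NAE $3$-SAT described above, whose correctness is Lemmas~\ref{lem:fw} and~\ref{lem:re}.

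\textbf{Membership.} Given a $2$-coloring $c$ of $G$ as a certificate, we check in $O(\lvert E(G)\rvert)$ time that each vertex $v$ has at most $\deg(v)/2$ neighbors of each color. Hence \textsc{Strong Majority $2$-Coloring} lies in \NP, even without restricting to Eulerian inputs.

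\textbf{Hardness.} We start from an instance $\Phi$ of NAE $3$-SAT, which is \NP-complete by Schaefer's dichotomy. We then build $G$ as in the construction, on $3m+4n$ vertices. Before applying the lemmas, I would confirm that $G$ is a legitimate instance, i.e.\ that it is Eulerian.

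Evenness of degrees is checked class by class:
\begin{itemize}
\item joining vertices and intermediate vertices have degree $2$;
\item clause vertices have degree $3+1=4$;
\item a literal vertex occurring in $t$ clauses has degree $2+2t$.
\end{itemize}
Here I would assume clauses contain three distinct variables, which we may by standard preprocessing. This ensures the literal--clause edges are simple and the clause degree is exactly $4$.

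Connectivity is not automatic: distinct variable gadgets are linked only through shared clauses. To fix this, I would either restrict to instances whose variable--clause incidence graph is connected (NAE $3$-SAT stays hard, since a formula is NAE-satisfiable iff each component is), or argue componentwise. In the latter case, a disconnected graph with all degrees even is a disjoint union of Eulerian graphs, and the coloring question splits over components.

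With the instance validated, Lemma~\ref{lem:fw} shows that a NAE-satisfying assignment yields a strong majority $2$-coloring. Lemma~\ref{lem:re} shows the converse, since each clause vertex of degree $4$ cannot see three literal neighbors of one color. So $\Phi$ is a yes-instance iff $G$ is, and the theorem follows.

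\textbf{Main obstacle.} The only delicate point is this instance-validity step: the even-degree and distinct-literal bookkeeping, and above all the connectivity requirement for ``Eulerian.'' I would handle connectivity by first reducing to connected formulas.
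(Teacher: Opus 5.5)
Your proposal is correct and is essentially the paper's proof: NP membership by verifying a given coloring, then the same NAE $3$-SAT construction, with Lemmas~\ref{lem:fw} and~\ref{lem:re} giving the equivalence. The paper only remarks that all degrees are even and never addresses repeated literals or connectivity, so your extra bookkeeping just tightens it. The one soft spot is that ``NAE-satisfiable iff each component is'' by itself yields only a truth-table reduction, not a many-one reduction, to connected formulas. This is easily fixed: link the components with the clause pair $(a\lor b\lor z)$, $(a\lor b\lor \bar z)$, which forces $a\neq b$, and use that NAE-satisfying assignments are closed under complementing every variable.
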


\begin{figure}[h]
    \centering
    \begin{tikzpicture}[scale=0.7,
    line width=0.7,
    smooth]

    \draw(0,0.3)--(2.15,3.65);
    \draw(0.3,0)--(6.4,3.5);
    \draw(3,0.3)--(2.55,3.55);
    \draw(3,0.3)--(6.4,3.5);
    \draw(6,0.3)--(2.8,3.65);
    \draw(6,0.3)--(6.5,3.5);
    \draw(6,0.3)--(10,4);
    \draw(6.25,-0.1)--(14.4,3.65);
    \draw(11,0.3)--(10.5,3.5);
    \draw(11,0.3)--(14.4,3.6);
    \draw(12,0.3)--(10.9,3.7);
    \draw(12,0.3)--(14.6,3.6);
    
   \foreach \x in {0,6,9,12}
         \draw[fill=blue!50](\x,0) circle (0.3 cm);
    \draw[fill=red!50] (3,0) circle (0.3 cm);
    \foreach \x in {0,1,...,4}
      {\pgfmathtruncatemacro{\z}{\x+1}
      \pgfmathtruncatemacro{\y}{3*\x}
      \node at (\y,0){$x_\z$};}
    \foreach \x in {0,6,9,12}
         \draw (\x+2,0) [fill=red!50]circle (0.3 cm);
    \draw(5,0)[fill=blue!50] circle (0.3 cm);
    \foreach \x in {0,1,...,4}
      {\pgfmathtruncatemacro{\z}{\x+1}
      \pgfmathtruncatemacro{\y}{3*\x}
      \node at (\y+2,0){$\bar{x_\z}$};}
    \foreach \x in {0,3,...,12}
        {\draw(\x+0.2,0.2)--(\x+0.7,0.7);
        \draw(\x+0.2,-0.2)--(\x+0.7,-0.7);
        \draw(\x+1.8,0.2)--(\x+1.3,0.7);
        \draw(\x+1.8,-0.2)--(\x+1.3,-0.7);
        }
    \foreach \x in {0,3,...,12}
         {\draw[fill=blue!50] (\x+1,0.7) circle (0.3 cm);
         \draw[fill=red!50](\x+1,-0.7) circle (0.3 cm);
         }
    \foreach \x in {0,1,...,4}
      {\pgfmathtruncatemacro{\z}{\x+1}
      \pgfmathtruncatemacro{\y}{3*\x}
      \node at (\y+1,0.7){$p_\z$};
      \node at (\y+1,-0.7){$q_\z$};}

    \draw[fill=blue!50](2.5,4) circle (0.5 cm);
    \node at (2.5,4){$C^1_1$};
     \draw[fill=red!50](6.5,4) circle (0.5 cm);
    \node at (6.5,4){$C^2_1$};
    \draw(4.5,4)[fill=red!50] circle (0.5 cm);
    \node at (4.5,4){$i_1$};
    \draw(3,4)--(4,4);
    \draw(5,4)--(6,4);

    \draw[fill=blue!50](10.5,4) circle (0.5 cm);
    \node at (10.5,4){$C^1_2$};
    \draw[fill=red!50](14.5,4) circle (0.5 cm);
    \node at (14.5,4){$C^2_2$};
     \draw[fill=red!50](12.5,4) circle (0.5 cm);
    \node at (12.5,4){$i_2$};

    \draw(11,4)--(12,4);
    \draw(13,4)--(14,4);

    \end{tikzpicture}
    \caption{Reduction from NAE $3$-SAT to strong majority coloring with two colors. Consider the instance $\Phi=C_1\land C_2$ of NAE $3$-SAT, where $C_1=x_1\lor x_2\lor x_3$ and $C_2=x_3\lor \bar{x_4}\lor x_5$. The assignment $x_1=x_3=x_4=x_5=T$ and $x_2=F$, is such that each clause contains one true literal and one false literal. We show the corresponding strong majority coloring of the graph $G$ with two colors.}
    \label{fig:strongmajoritytwocolors}
\end{figure}

\section{Concluding Remarks}
\label{S:Conclusion}

In this paper we have showed that \(\mu_g(G)\le3\) for certain classes of graph such as graphs arising from subdivisions, 2-caterpillars, trees with all leaves at a fixed depth \(k\) where \(k\le4\) and certain classes of infinite acyclic graphs.
We gave some natural strategies which failed to be a winning strategy for Alice in the majority coloring game on general trees.
We also initiated the study of the strong majority coloring game by concluding exactly \(\mathrm{Maj}_g(C_n)\) for all \(n\ge3\) and the complexity results including PSPACE-completeness of the \textsc{Strong Majority Game Chromatic Number} problem and the NP-completeness of the \textsc{Strong Majority \(2\)-Coloring} problem on Eulerian graphs.

We close with some interesting questions for further study.
Observe that by the Faigle--Kern--Kierstead--Trotter~\cite{FaigleKernEtAl1993} strategy, if Alice wins the majority coloring game on trees with \(k\) colors, then she also wins with \(k+1\) colors, where \(k \geq 3\).
    So, we ask whether the same holds for \(k = 2\):
\begin{question}
     If Alice wins the majority coloring game on an tree \(T\) with \(2\) colors, then does she also win on \(T\) with \(3\) colors?
\end{question}
An answer to the above question will also answer Zhu's question for the majority coloring game on trees.
Note that Zhu's question is answered positively for trees with maximum degree at most \(4\) by Theorem~\ref{T:FSTTCS}(\ref{T:FSTTCSa}).
Part of the difficulty with the above question is that there does not appear to be a simple description of the trees for which \(\mu_g(T) = 2\).
To illustrate, recall that for nonempty paths we have \(\mu_g(G) =  2\) by Theorem~\ref{T:FSTTCS}(\ref{T:FSTTCSb}).
Suppose we join two paths as shown in figure~\ref{F:Joining_paths}.
The result is a \(1\)-subdivision of the claw \(K_{1,3}\) for which it was shown in Section~\ref{S:Subdivisions} that \(\mu_g(G)=3\).

\begin{figure}
    \centering

    \begin{tikzpicture}[
            every node/.style={circle, draw, fill=white, inner sep=1pt},
            line width=0.9pt,
            smooth
        ]

        \node (0) at (0,0) {0};
        \node (1) at (1,0) {1};
        \node (2) at (2,0) {2};
        \node (3) at (3,0) {3};
        \node (4) at (4,0) {4};
       
        \draw (0)--(1)--(2)--(3)--(4);

        \node at (4,1) [draw=none] {\scalebox{2}{\(+\)}};

        \node (22) at (5,2) {2};
        \node (5) at (5,1) {5};
        \node (6) at (5,0) {6};
        
        \draw (22)--(5)--(6);

        \node at (6,1) [draw=none] {\scalebox{2}{\(=\)}};

        \node (f0) at (7,0) {0};
        \node (f1) at (7,1) {1};
        \node (f2) at (8,2) {2};
        \node (f3) at (9,1) {3};
        \node (f4) at (9,0) {4};
        \node (f5) at (8,1) {5};
        \node (f6) at (8,0) {6};
        
        \draw (f0)--(f1)--(f2)--(f3)--(f4);
        \draw (f2)--(f5)--(f6);
    \end{tikzpicture}

\caption{Joining paths: For the resulting graph \(G\), \(\mu_g(G)=3\)}
\label{F:Joining_paths}
\end{figure}

Likewise, consider the two paths joined as shown in Figure~\ref{F:Joining_paths_2}.
The result is a \(2\)-subdivision of the claw \(K_{1,3}\), for which we showed that \(\mu_g(G)=2\) in Section~\ref{S:Subdivisions}.

\begin{figure}
    \centering

    \begin{tikzpicture}[
            every node/.style={circle, draw, fill=white, inner sep=2pt},
            line width=0.9pt,
            smooth
        ]

        \node (0) at   (0,0) {};
        \node (1) at  (.5,0) {};
        \node (2) at   (1,0) {};
        \node (3) at (1.5,0) {};
        \node (4) at   (2,0) {};
        \node (5) at (2.5,0) {};
        \node (6) at  (3,0) {};
       
        \draw (0)--(1)--(2)--(3)--(4)--(5)--(6);

        \node at (4,1) [draw=none] {\scalebox{2}{\(+\)}};

        \node (5) at (5,1.5) {};
        \node (6) at   (5,1) {};
        \node (7) at  (5,.5) {};
        \node (8) at   (5,0) {};
        
        \draw (5)--(6)--(7)--(8);

        \node at (6,1) [draw=none] {\scalebox{2}{\(=\)}};

        \node (v0) at (8,1.5) {};
        \node (v1) at (7.5,1) {};
        \node (v2) at ( 8,1) {};
        \node (v3) at (8.5,1) {};
        \node (v4) at (7,.5) {};
        \node (v5) at ( 8,.5) {};
        \node (v6) at ( 9,.5) {};
        \node (v7) at ( 7,0) {};
        \node (v8) at ( 8,0) {};
        \node (v9) at ( 9,0) {};

        \draw (v8)--(v5)--(v2)--(v0)--(v1)--(v4)--(v7);
        \draw (v0)--(v3)--(v6)--(v9);
    \end{tikzpicture}

\caption{Joining paths: For the resulting graph \(G\), \(\mu_g(G)=2\)}
\label{F:Joining_paths_2}
\end{figure}

Moreover, simple degree conditions do not seem to suffice to characterize which trees are game majority \(2\)-chromatic.
Subdivided stars with more than \(3\) legs can also behave like the above subdivisions of the claw \(K_{1,3}\).
More starkly, the proofs of Theorem~\ref{T:FSTTCS}(\ref{T:FSTTCSb}) for paths and stars can be combined to show that if \(T\) is obtained from two arbitrary stars by joining their centers with an arbitrary path, then the resulting tree is game majority \(2\)-chromatic but it can have arbitrarily large degree.
As an aside, we note that the condition on the maximum degree in Theorem~\ref{T:FSTTCS}(\ref{T:FSTTCSa}) can also be relaxed in certain ways: for instance, one can attach arbitrarily many leaves to a leaf-neighbor in a tree \(T\) with \(\Delta(T) \leq 4\), and the same strategy as in the proof of Theorem~\ref{T:FSTTCS}(\ref{T:FSTTCSa}) again works for Alice.

\begin{figure}[h]
    \centering
    \begin{tikzpicture}[every node/.style={circle, draw, fill=white, inner sep=2pt},
    line width=0.8,
    smooth]

    \node (1) at (0,0) {};
    \node (2) at (.5,0) {};
    \node (3) at (1,0) {};
    \node (4) at (1.5,0) {};
    \node (5) at (2,0) {};
    \node (6) at (2.5,0) {};
    \node (7) at (3,0) {};
    \node (8) at (3.5,0) {};
    \node (9) at (4,0) {};
    \node (10) at (4.5,0) {};
    \node (11) at (5,0) {};
    \node (12) at (5.5,0) {};
    \node (13) at (6,0) {};

    \node (l11) at (0,1) {};
    \node (l12) at (0,-1) {};
    \node (l13) at (-.5,-1) {};
    \node (l14) at (-.5,1) {};
    \node (l15) at (-1,-.8) {};
    \node (l16) at (-1,.8) {};
    \node (l17) at (-1.5,-.2) {};
    \node (l18) at (-1.5,.2) {};
    \node (l19) at (-1.4,-.5) {};
    \node (l110) at (-1.4, .5) {};
    \node (l111) at (.5,-1) {};
    \node (l112) at (.5,1) {};

    \node (l21) at (6,1) {};
    \node (l22) at (6,-1) {};
    \node (l23) at (6.5,-1) {};
    \node (l24) at (6.5,1) {};
    \node (l25) at (7,-.8) {};
    \node (l26) at (7,.8) {};
    \node (l27) at (7.5,-.2) {};
    \node (l28) at (7.5,.2) {};
    \node (l29) at (7.4,-.5){};
    \node (l210) at (7.4, .5){};
    \node (l211) at (5.5,-1) {};
    \node (l212) at (5.5,1) {};

    \draw (1)--(2)--(3)--(4)--(5)--(6)--(7)--(8)--(9)--(10)--(11)--(12)--(13);
    \draw (l11)--(1)--(l12);
    \draw (l13)--(1)--(l14);
    \draw (l15)--(1)--(l16);
    \draw (l17)--(1)--(l18);
    \draw (l19)--(1)--(l110);
    \draw (l111)--(1)--(l112);

    \draw (l21)--(13)--(l22);
    \draw (l23)--(13)--(l24);
    \draw (l25)--(13)--(l26);
    \draw (l27)--(13)--(l28);
    \draw (l29)--(13)--(l210);
    \draw (l211)--(13)--(l212);

    \end{tikzpicture}
    \caption{Two high degree stars joined by a path. For the resulting graph \(G\), \(\mu_g(G)=2\).}
    \label{F:stars_joined_path}
\end{figure}

Thus, the caterpillar in Figure~\ref{F:stars_joined_path} above is in fact game majority \(2\)-chromatic.
However, it is not clear whether attaching another path to an internal spine vertex must necessarily increase its game majority chromatic number.
So, as a partial characterization of game majority \(2\)-colorable graphs, and to move towards an answer to the previous question, we ask:
\begin{question}
    Which caterpillars are game majority \(2\)-colorable? 
\end{question}

Bosek--Grytczuk--Jak\'obczak~\cite{BosekGrytczukEtAl2019} mention that it is not clear whether \(\mu_g(T) \leq 3\) for every tree \(T\).
We have discussed some of the difficulties with natural strategies on general trees in Section~\ref{S:Natural_strategy}.
We also found that answering the question for \(3\)-caterpillars was not entirely straightforward.
So, we ask:
\begin{question}
    Is it true that \(\mu_g(C) \leq 3\) for every caterpillar \(C\)?
\end{question}
A positive answer would strengthen the case for a positive answer to the problem raised by Bosek--Grytczuk--Jak\'obczak for a general tree.

Next, we consider some variations of the majority coloring game and restrictions on Alice's strategies:
\begin{question}
    Is there an integer \(m \geq 1\) such that if Alice plays \(m\) moves for every one move of Bob, then she wins the majority coloring game on trees with \(3\) colors?
\end{question}
Of course, if \(m = 1\), then \(\mu_g(T) \leq 3\) for every tree \(T\).

\begin{question}
    For each fixed integer \(r \geq 1\), consider a strategy of Alice on trees which depends only on the \(r\)-neighborhood of the last vertex that Bob colored.
    Does every such strategy of Alice lose on \(3\) colors to Bob on some tree?
\end{question}

From the negative results on natural Alice-strategies, it appears that no ``local'' strategy can win for Alice on arbitrary trees with \(3\) colors. A positive answer to the above question would justify this intuition.

\begin{question}
    Consider the variant of the majority coloring game on trees where Alice uses the palette \(\{1,2,3,4\}\) and Bob can only use the palette \(\{1,2,3\}\).
    Since \(\mu_g(T) \leq 4\) for every tree \(T\), Alice has a winning strategy in this variant on every tree \(T\).
    Does Alice have a winning strategy such that if \(T\) is a tree of order \(n\), then there are \(O(1)\) uses of the color \(4\)?
\end{question}
Of course, if Alice has a winning strategy with zero uses of the color \(4\), then this would prove that \(\mu_g(T) \leq 3\) for every tree \(T\).
We currently do not know of any strategy in which Alice can win with a sublinear number of uses of the color \(4\), so we pose the weaker version as an open question as well:
\begin{question}
    In the setup of the previous question, does Alice have a winning strategy such that if \(T\) is a tree of order \(n\), then there are \(o(n)\) uses of the color \(4\)?
\end{question}

\begin{credits}
\subsubsection{\ackname}
\begin{itemize}
    \item The research of the first author is supported by the University Grants Commission (UGC) Junior Research Fellowship (JRF), Govt.\ of India.
    \item The research of the second author is supported by Anusandhan National Research Foundation (ANRF) National Post-Doctoral Fellowship (N-PDF), Govt.\ of India.
    The second author is also thankful to Arka Ray for helpful discussions about the complexity results.
    \item The second, third, and fourth authors thank the organizers of the Indo-Polish Pre-Conference School on Algorithms and Combinatorics, held in conjunction with CALDAM 2026, where they were introduced to majority colorings.
    \item The authors acknowledge the use of Claude and Gemini for creating the code to implement the \(\lambda\)-strategy and the \((\lambda, \mu)\)-strategy in an interactive format and for assistance in creating the Ti\textit{k}Z code for Figures~\ref{F:game11august} and~\ref{F:game_20_august} from the game screenshots.
    The Python-based code which was used to test the strategy interactively can be found at \url{https://github.com/Yashu112/Majority_coloring_code}.
\end{itemize}

\subsubsection{\discintname}
The authors have no competing interests to declare that are
relevant to the content of this article.
\end{credits}

\bibliographystyle{splncs04}
\bibliography{CALDAM}

\end{document}